\documentclass[twoside]{article}
\usepackage{thmtools}
\usepackage{amsthm}
\usepackage{amsmath,amssymb,mathtools}
\usepackage{mathrsfs}
\usepackage{enumerate}
\usepackage{dsfont}
\usepackage{xcolor}
\usepackage{extarrows}
\usepackage{bm}
\usepackage{multirow}
\usepackage{float}

\usepackage{PRIMEarxiv}

\usepackage[utf8]{inputenc} 
\usepackage[T1]{fontenc}    
\usepackage{hyperref}       
\usepackage{url}            
\usepackage{booktabs}       
\usepackage{amsfonts}       
\usepackage{nicefrac}       
\usepackage{microtype}      
\usepackage{lipsum}
\usepackage{subcaption}
\usepackage{fancyhdr}       
\usepackage{graphicx}       
\graphicspath{{media/}}     

\newtheoremstyle{break}
  {10pt}{10pt}
  {\normalfont}
  {}
  {\bfseries}
  {.}
  {\newline}
  {}
\theoremstyle{plain}
\newtheorem{theorem}{Theorem}[section]
\newtheorem{lemma}[theorem]{Lemma}

\newtheorem{definition}[theorem]{Definition}

\newtheorem{remark}[theorem]{Remark}

\newcounter{assumpA}
\renewcommand{\theassumpA}{A\arabic{assumpA}}

\newenvironment{assumptionA}[1][]{%
  \refstepcounter{assumpA}%
  \par\noindent
  \textbf{(\theassumpA)%
  \if\relax\detokenize{#1}\relax\else\quad #1\fi.}%
  \quad
  \itshape
}{\par}

\makeatletter
\renewenvironment{proof}[1][\proofname]{%
  \par
  \pushQED{\qed}%
  \normalfont
  \topsep 1pt plus 1pt minus 1pt
  \trivlist
  \item[\hskip\labelsep\itshape #1\@addpunct{.}]%
  \ignorespaces
}{%
  \popQED\endtrivlist\@endpefalse
}
\makeatother

\newtheoremstyle{proofappendix}
  {6pt}
  {6pt}
  {\normalfont}
  {}
  {\bfseries}
  {~~of}
  {0.5em}
  {}

\theoremstyle{proofappendix}

\numberwithin{equation}{section}

\title{Elasticity estimation via the CKLS--CIR transform for small-noise CEV-type diﬀusions}

\author{
  Boyuan Ning \\
  Graduate School of Fundamental Science and Engineering \\
  Waseda University \\
  3-4-1 Ohkubo, Shinjuku-ku, Tokyo, Japan\\
  \texttt{ningboyuan@akane.waseda.jp}\\
  \And
  Yasutaka Shimizu \\
  Department of Applied Mathematics \\
  Waseda University \\
  3-4-1 Ohkubo, Shinjuku-ku, Tokyo, Japan\\
  \texttt{shimizu@waseda.jp}\\
}

\begin{document}
\maketitle

\begin{abstract}
We construct a feasible two-step estimator for the elasticity parameter in small-noise CEV-type diffusions. Under joint small-noise and high-frequency asymptotics, we establish its consistency and an $\varepsilon^{-1}$ central limit theorem with an explicit asymptotic variance. We adapt a Lamperti-based transformation strategy to elasticity inference for a CEV process with an attainable, absorbing $0$ boundary. Under a local-to-CEV scaling, an auxiliary CKLS process and a Girsanov change of measure yield a strictly positive CIR-type benchmark in which the elasticity is recast as a linear-drift parameter. A preliminary elasticity estimator based on local realized variance supplies the unknown parameter in the state mapping, yielding a feasible LSE-type estimator. The asymptotic theory combines control of the CKLS-to-CEV path-replacement error, total-variation closeness between the original and changed measures, and asymptotic negligibility of the preliminary plug-in error, thereby transferring the auxiliary CIR limit theory to the feasible estimator under the original measure.

\end{abstract}

\keywords{Elasticity parameter estimation \and CKLS-type diffusions \and small-noise asymptotics \and Girsanov change of measure \and high-frequency observations}

\section{Introduction}
\subsection{CEV and CKLS-type diffusions}
Throughout this paper, let $(\Omega,\mathcal{F},\{\mathcal{F}_{t}\}_{t\geq 0},\mathbb{P})$ be a filtered probability space satisfying the usual conditions, and let $W=\{W_{t}\}_{t\geq0}$ be a standard Brownian motion defined on this space. 

\begin{definition}[CEV model]\label{def:CEV}
Let $\mu\in\mathbb{R}$, $\nu>0$ and $\vartheta>0$. A constant elasticity of variance process, first developed by \cite{cox1975notes} and abbreviated as a CEV process, is a process 
$X=\{X_{t}\}_{t\geq 0}$ satisfying
\[
	dX_{t}=\mu X_{t}dt+\nu\big(X_{t}\big)^{\vartheta}dW_{t},\qquad X_{0}=x_{0}>0.
\]
Here $\mu$ is the linear-drift coefficient, $\nu$ is the volatility scale parameter, and $\vartheta$ is the elasticity parameter. For the standard analytical properties of the CEV process, including existence, positivity, boundary behavior, moment properties, and ergodicity, see, e.g., \cite{revuz1999continuous}, \cite{delbaen2002note}, \cite{gontis2011interplay}, \cite{lindsay2012simulation}, \cite{tsumurai2020malliavin}, \cite{ning2025ckls}.

In this paper, we focus on the sublinear-elasticity, mean-decaying regime
\[
	\frac{1}{2}<\vartheta<1,\qquad\mu<0.
\]
where $\vartheta$ is the parameter of interest, while $\mu$ is treated as a known linear-drift parameter. Notably, for this regime of $\vartheta$, the CEV equation admits a pathwise unique global strong solution satisfying
$\mathbb{P}(0\leq X_{t}<\infty\text{ for all }t\geq0)=1$. Starting from $x_{0}>0$, the process can hit $0$ with positive probability, and once it reaches $0$, it remains there thereafter.
\end{definition}
The restriction on $\vartheta$ is motivated by both theoretical and empirical considerations. First, this range is the CEV analogue of the CKLS elasticity regime studied in \cite{ning2025ckls}. For the CEV model, this range combines strong well-posedness with an attainable, absorbing $0$ boundary, which is excluded by the boundary assumptions in \cite{ait2002maximum}. Drawing on the Lamperti-based perspective of that work, we developed an auxiliary CKLS process and a change of measure based on the main result of \cite{ning2025ckls} to conduct asymptotic inference through a strictly positive CIR benchmark. Second, this regime is empirically relevant. Among econometric studies, we mention three illustrative examples. \cite{xiao2017estimating} report elasticity estimates of 0.880 and 0.885 for the CSI 300 index using MCMC and maximum likelihood, while \cite{ballestra2016numerical} find that 27 of the 50 NYSE equities in their sample have estimated elasticities within this interval. Similarly, \cite{ballestra2011constant} obtain maximum-likelihood estimates within this interval for 16 of the 40 Italian equities in their sample.

The restriction on $\mu$ corresponds to a mean-decaying deterministic linear-drift in the CEV model and, under the local-to-CEV formulation \eqref{Local-to-CEV-CKLS}, places the associated CKLS model in the mean-reverting regime; see Definition~\ref{def:CKLS}. This condition is consistent with the mean-reverting CKLS benchmark underlying the CKLS--CIR transform given in Appendix~\ref{aaa}, in the sense that it ensures regularity properties of the CKLS process required in the corresponding analysis and guarantees that the transformed pseudo-CIR dynamics remain mean-reverting. For $\vartheta\in(1/2,1)$, after the CKLS--CIR transform, the corresponding CIR-type linear-drift parameter (see Definition~\ref{def:CIR}) is $\beta(\vartheta)=-2\mu(1-\vartheta)>0$.

For statistical inference, we consider the following target model: the small-noise CEV model
\begin{equation}\label{small-noise-CEV-def}
    dX_{t}^{\varepsilon,\vartheta}=\mu X_{t}^{\varepsilon,\vartheta}dt+\varepsilon\nu (X_{t}^{\varepsilon,\vartheta})^{\vartheta}dW_{t},\qquad X_{0}^{\varepsilon,\vartheta}=x_{0}>0,
\end{equation}
where $\varepsilon\in(0,1]$ is the noise-level parameter, $\mu<0$, and $\nu>0$. Our objective is to estimate the elasticity parameter $\vartheta$ from high-frequency observations over a fixed time horizon under joint small-noise and high-frequency asymptotics.

\begin{definition}[CKLS model]\label{def:CKLS}
Let $a,b\in\mathbb{R}$, $\sigma>0$ and $k>0$. 
A Chan--Karolyi--Longstaff--Sanders process, first developed by \cite{chan1992empirical} and abbreviated as a CKLS process, is a process $Y=\{Y_{t}\}_{t\geq 0}$ satisfying
\[
	dY_{t}=(a-bY_{t})dt+\sigma(Y_{t})^{k}dW_{t},\qquad Y_{0}=y_{0}>0.
\]
Here $a$ is the constant-drift coefficient and $b$ is the linear-drift coefficient. In the mean-reverting regime $a,b>0$, $b$ is the mean-reversion speed and $a/b$ is the long-run mean. $\sigma$ is the volatility scale parameter, and $k$ is the elasticity parameter. For $a>0$ and $k\in(1/2,1)$, the equation admits a pathwise unique global strong solution satisfying $\mathbb{P}(0<Y_{t}<\infty\text{ for all }t\geq0)=1$. For other analytical properties of the CKLS process, including existence, positivity, boundary behavior, moment properties, and ergodicity, see, e.g., \cite{andersen2007moment}, \cite{tsumurai2020malliavin}, \cite{mishura2022parameter}, \cite{ning2025ckls}.

While the CEV model appears to be a special case of the CKLS model with $a=0$, it has distinct analytical properties. Although the diffusion coefficient degenerates at the origin in both models, for $1/2<\vartheta<1$, the origin is attainable and absorbing for the CEV process, whereas the CKLS process remains strictly positive.
\end{definition}

Throughout this paper, the associated CKLS model serves as a benchmark and is considered in the mean-reverting power-volatility regime $a>0$, $b>0$ (ergodic case), $\sigma>0$, and $k\in(1/2,1)$, and serves primarily as a local benchmark and technical device. For the CKLS–CIR transform and related background, see \cite{ning2025ckls}. Accordingly, we adopt the following specification of the CIR model for the mean-reverting case.

\begin{definition}[CIR model]\label{def:CIR}
Let $\alpha>0$, $\beta>0$ and $\gamma>0$. A Cox--Ingersoll--Ross process, first developed by \cite{cox1985theory} and abbreviated as a CIR process, is a process $Z=\{Z_{t}\}_{t\geq 0}$ satisfying
\[
	dZ_{t}=(\alpha-\beta Z_{t})dt+\gamma\sqrt{Z_{t}}dW_{t},\qquad Z_{0}=z_{0}>0.
\]
Here $\beta$ denotes the mean-reversion speed, $\alpha$ represents the long-run mean multiplied by $\beta$, and $\gamma$ is the volatility scale parameter. For the standard analytical properties of the CIR process, including existence, positivity, boundary behavior, moment properties, and ergodicity, see, e.g., \cite{revuz1999continuous}, \cite{jeanblanc2009mathematical}, \cite{alaya2013asymptotic}, \cite{ning2025ckls}.

Notably, the condition $\alpha>0$ gives an inward drift at $0$ and prevents absorption there; together with $\beta>0$, it yields a unique Gamma invariant distribution and ergodicity. Strict positivity of paths starting from $z_{0}>0$ is ensured by the additional Feller condition $2\alpha\geq\gamma^{2}$.

We also highlight the CEV--CIR correspondence: Let $X$ be the CEV process in Definition~\ref{def:CEV} with $\vartheta\neq1$.
Set $Z_{t}:=X_{t}^{2(1-\vartheta)}$ and $\tau_{0}:=\inf\{t\geq0:X_{t}=0\}$. Then, for $t<\tau_{0}$, following from It\^{o}'s formula applied to $x\mapsto x^{2(1-\vartheta)}$ localized on compact subintervals of $(0,\infty)$, $Z$ satisfies the following CIR-type dynamics, whose drift parameters need not satisfy the positivity conditions in Definition~\ref{def:CIR}:
\begin{equation}\label{prop:CEV-CIR}
    dZ_{t}=(\alpha_{\vartheta}-\beta_{\vartheta}Z_{t})dt+\gamma_{\vartheta}\sqrt{Z_{t}}dB_{t},\text{ where }\alpha_{\vartheta}=\nu^{2}(1-\vartheta)(1-2\vartheta),~\beta_{\vartheta}=-2\mu(1-\vartheta),~\gamma_{\vartheta}=2\nu|1-\vartheta|,
\end{equation}
$B_{t}=\operatorname{sgn}(1-\vartheta)W_{t}$ is a standard Brownian motion. This power transform is a scaled square of the Lamperti coordinate
\[
    \ell_{\vartheta}(x)=\frac{x^{1-\vartheta}}{\nu(1-\vartheta)},\qquad x^{2(1-\vartheta)}=\nu^{2}(1-\vartheta)^{2}\ell_{\vartheta}(x)^{2}.
\]
Consequently, the CEV process admits the power representation
$X_{t}=Z_{t}^{1/[2(1-\vartheta)]}$ before $\tau_{0}$. This CIR-type representation does not guarantee strict positivity for all times: when $1/2<\vartheta<1$, $\alpha_{\vartheta}<0$ and $2\alpha_{\vartheta}-\gamma_{\vartheta}^{2}=-2\nu^{2}(1-\vartheta)<0$, so the CIR-type diffusion admits a negative constant-drift and the Feller condition is not satisfied.
\end{definition}
Despite their different drift specifications, the CEV, CKLS, and CIR models share the power-type diffusion coefficient $x\mapsto cx^{\ell}$, with $(c,\ell)=(\nu,\vartheta)$, $(\sigma,k)$, and $(\gamma,1/2)$, respectively. For elasticity inference, the CEV and CKLS models hence share the same essential target—the power index in the diffusion coefficient—which motivates the term “CKLS-type diffusions” and our local-to-CEV CKLS formulation \eqref{Local-to-CEV-CKLS}.

\subsection{Literature review} 

\subsubsection{Elasticity Inference for the CEV Model}
Parametric inference for discretely observed diffusion processes has been studied extensively, particularly under high-frequency sampling. Major approaches include Gaussian contrast, quasi-likelihood, and estimating-function methods, with local Gaussian approximations playing an important role in high-frequency inference. Representative contributions include \cite{florens1989approximate}, \cite{yoshida1992estimation}, \cite{genon1993estimation}, \cite{genon1994estimation}, \cite{bibby1995martingale}, \cite{kessler1999estimating}, \cite{jacobsen2001discretely}, \cite{jacobsen2002optimality}, \cite{uchida2013quasi}, and \cite{jakobsen2017efficient}.

In the regime $1/2<\vartheta<1$, the CEV model presents additional difficulties for high-frequency inference near $0$: $\frac{d}{dx}x^{\vartheta}=\vartheta x^{\vartheta-1}\to\infty$ as $x\downarrow0$, so the diffusion coefficient is not Lipschitz at the origin, complicating local expansions, Euler-type quasi-likelihood arguments, and uniform remainder estimates near this boundary. Moreover, unlike in the mean-reverting CKLS model with a positive constant-drift, both drift and diffusion coefficients of the CEV model vanish at $0$, making the origin absorbing; its boundary behavior therefore requires explicit treatment (see, e.g., \cite{delbaen2002note,lindsay2012simulation}).

One possible approach is to exploit analytical representations of the CEV transition law through squared-Bessel-type transforms and modified Bessel functions; see, e.g., \cite{delbaen2002note}. However, the complexity of the resulting density makes explicit elasticity estimation and high-frequency asymptotic expansions technically demanding. Approximate likelihood methods based on local linearization
\cite{shoji1998estimation} or Lamperti transforms and Hermite-type expansions \cite{ait2002maximum} improve tractability under suitable regularity conditions. For CEV elasticity inference, the Lamperti transform itself depends on the unknown elasticity, adding parameter-derivative issues to the likelihood approximation. Moreover, absorption at $0$ requires boundary analysis beyond the non-attainable-boundary framework of \cite{ait2002maximum}.

Realized-volatility, power-variation, and bipower-variation methods estimate pathwise volatility functionals from high-frequency data \cite{barndorff2002econometric,barndorff2004power}. In particular, realized variance uses squared increments to estimate integrated variance.

Existing approaches to CEV elasticity estimation include model-specific regression, numerical likelihood, and option-implied calibration. \cite{beckers1980constant} proposed log-linear regression based on the variance--level relationship. Subsequent studies numerically optimized return-based, exact transition-density, or Gaussian approximate likelihoods \cite{tucker1988tests,ballestra2011constant,beiranvand2021new}, while others calibrated elasticity by minimizing option-pricing or implied-volatility errors \cite{emanuel1982further,tucker1988tests,yuen2001estimation,ballestra2016numerical}. These studies primarily focus on numerical estimation and empirical calibration using financial data, rather than on high-frequency asymptotic theory for elasticity estimation.

\subsubsection{Parametric Inference for Small-Noise Diffusions}
Statistical inference for dynamical systems with small noise was systematically developed by \cite{kutoyants1994identification}, covering parametric and nonparametric identification under fixed-horizon vanishing-noise asymptotics. For incomplete or discrete observations, \cite{genon1990maximnm} established asymptotically normal maximum-contrast estimators, while \cite{laredo1990sufficient} studied asymptotic sufficiency in Le Cam's sense and constructed estimators equivalent to those based on continuous observation.

Martingale estimating functions for discretely observed diffusions \cite{bibby1995martingale} were extended to small-dispersion asymptotics using stochastic Taylor expansions and Gaussian approximations \cite{sorensen2000small}. Under joint small-noise and high-frequency asymptotics, \cite{sorensen2003small} constructed minimum-contrast estimators for drift and diffusion parameters in multidimensional models based on local Gaussian transition approximations. Related contributions include explicit contrasts for joint drift--diffusion estimation encompassing quadratic martingale estimating functions \cite{gloter2009estimation}. Using a Gaussian process approximation, \cite{guy2014parametric} developed minimum-contrast estimators for discretely observed small-noise diffusions and established their consistency and asymptotic normality. More recently, \cite{kawai2023adaptive} developed adaptive estimators and tests for drift and diffusion parameters under joint small-noise and high-frequency asymptotics.

For drift estimation under additive small L\'{e}vy noise, \cite{long2013least} proposed a simple least-squares contrast. The least-squares approach was extended to state-dependent small L\'{e}vy noise without imposing a Lipschitz condition
on the dispersion coefficient \cite{long2017least}. For jump-noise models, \cite{shimizu2017threshold} further developed filtering and threshold procedures.

\subsection{Contributions and paper organization} 
The main contribution of this paper is the development and asymptotic analysis of a two-step high-frequency estimator for the CEV elasticity parameter, obtained by embedding the small-noise CEV structure into a small-noise CKLS framework under a local-to-CEV scaling. First, the local variance relation induced by the CEV diffusion coefficient yields a preliminary elasticity estimator based on high-frequency quadratic-variation information. Second, a $C^{2}$ state mapping, together with a Girsanov change of measure, recasts the elasticity parameter of the auxiliary CKLS process as a linear-drift parameter in a small-noise CIR-type model. Under the local-to-CEV scaling, the auxiliary CKLS and target CEV paths are asymptotically close, allowing the observed CEV path to replace the auxiliary path
in an LSE-type linear-drift estimator motivated by
\cite{long2013least,long2017least}. Substituting the preliminary estimate into the state mapping then makes the procedure feasible. Under our joint asymptotic conditions, we control the path-replacement and plug-in errors separately and show that the $o_{\mathbb{P}}(\varepsilon)$ accuracy of the preliminary estimator preserves the oracle limit distribution. We thereby establish consistency and an $\varepsilon^{-1}$ central limit theorem for the feasible estimator under the original CEV observation model, with an explicit asymptotic variance, providing a distributional foundation for transformation-based inference on CEV elasticity.

As a further methodological contribution, we draw on the transform-based perspective exemplified by \cite{ait2002maximum} to develop a measure-change-based route to inference for a CEV process that may hit $0$ and is absorbed thereafter. A\"{i}t-Sahalia uses a Lamperti transform to facilitate likelihood expansions under boundary conditions ensuring non-attainability. Here, a rescaled square of the Lamperti transform is combined with an auxiliary CKLS process and a change of measure, allowing localized asymptotic analysis through strictly positive CIR dynamics. The unknown elasticity in the state mapping is replaced by a preliminary estimator, and the resulting plug-in error is proved negligible at the first-order scale under joint small-noise and high-frequency asymptotics. Under the local-to-CEV scaling, the Girsanov density converges to $1$ in $L^{1}$, making the original and changed measures asymptotically close in total variation. Together with the auxiliary-to-CEV approximation and plug-in equivalence, this permits the transfer of consistency and an explicit Gaussian limit to the feasible estimator under the original CEV observation model with absorption at $0$.

The remainder of the paper is organized as follows. Section~\ref{sec2} introduces the assumptions and constructs the two-step estimator. Section~\ref{sec3} states and proves the main asymptotic result and discusses the zero-boundary issue under direct CEV mapping. Appendix~\ref{aaa} develops the CKLS--CIR transformation and change of measure, and establishes asymptotic measure transfer under the local-to-CEV scaling. Appendix~\ref{bbb} develops the technical analysis supporting the two-step estimator and the main asymptotic results.

\section{Assumptions and Estimator Construction}\label{sec2}
\subsection{Assumptions}\label{subsec:ass}

\begin{assumptionA}[High-frequency small-noise scheme]\label{ass:A1}
Take the target small-noise CEV process $X^{\varepsilon,\vartheta_{0}}$ to be the nonnegative CEV solution absorbed at $0$. It is observed at the equidistant grid points, with a finite observation horizon $T>0$,
\[
    t_{i}:=i\Delta_{n},\qquad i=0,1,\ldots,n,\qquad\Delta_{n}:=T/n.
\]
The joint asymptotic regime is an increasing sampling frequency and a vanishing noise
\[
    \varepsilon\to0,\qquad n\to\infty \quad (\text{equivalently, }\Delta_{n}\to0).
\]
\end{assumptionA}

\begin{assumptionA}[CEV parameter space]\label{ass:A2}
Of interest is the elasticity parameter $\vartheta_{0}$ of the small-noise CEV model, satisfying
\[
    1/2<\underline{\vartheta}<\vartheta_{0}<\overline{\vartheta}<1,\qquad\Theta:=[\underline{\vartheta},\overline{\vartheta}].
\]
Moreover, $\mu<0$, $\nu>0$, and $x_{0}>0$ are assumed to be known and fixed constants.
\end{assumptionA}

\begin{assumptionA}[Auxiliary regularization]\label{ass:A3}
A positive regularization constant $a^{*}$ is fixed in advance, and we set
\[
    a_{\varepsilon}:=\varepsilon^{2}a^{*}>0,\qquad\text{for each }\varepsilon>0.
\]
This regularization is used in the auxiliary CKLS process introduced in \eqref{Local-to-CEV-CKLS}.
\end{assumptionA}

\begin{assumptionA}[Local window and identification]\label{ass:A4}
Fix a reference time $\tau\in[0,T)$ and define
\[
    j_{n}:=\lfloor\tau/\Delta_{n}\rfloor,\qquad\tau_{n}:=t_{j_{n}},\qquad h_{n}:=m_{n}\Delta_{n},
\]
where $m_{n}$ is a sequence of positive integers such that
$\tau'_{n}:=\tau_{n}+h_{n}\leq T$. Then $\tau_{n}\to\tau$. The local window $h_{n}$ satisfies
\[
    h_{n}\to0,\qquad m_{n}\to\infty,\qquad (\text{or equivalently,}\quad\Delta_{n}/h_{n}=1/m_{n}\to0).
\]
Moreover, choose $\tau$ such that $|\log\mathcal{X}_{\tau}|\geq c_{0}$ for some constant $c_{0}>0$, where $\mathcal{X}_{t}:=x_{0}e^{\mu t},~0\leq t\leq T$, denotes the deterministic small-noise limit of the target small-noise CEV model.
\end{assumptionA}

\begin{assumptionA}[Feasibility rate condition]\label{ass:A5}
The local window $h_{n}$, the observation mesh $\Delta_{n}$, and the small-noise level $\varepsilon$ satisfy
\[
    \sqrt{\Delta_{n}/h_{n}}=o(\varepsilon),\qquad\Delta_{n}=o(\varepsilon^{3})\quad (\text{equivalently, }n\varepsilon^{3}\to\infty~\text{, since }\Delta_{n}=T/n\text{ with fixed }T)
\]
\end{assumptionA}

\subsection{Preliminary elasticity estimation}\label{subsec:preliminary-estimation}
Under Assumption~\ref{ass:A4}, for the target CEV process $X^{\varepsilon,\vartheta_{0}}$, define the local realized variation by
\[
    \widehat{\mathcal{V}}_{\varepsilon,n}:=\sum_{i=j_{n}+1}^{j_{n}+m_{n}}\Big(X_{t_{i}}^{\varepsilon,\vartheta_{0}}-X_{t_{i-1}}^{\varepsilon,\vartheta_{0}}\Big)^{2}.
\]
To account for the deterministic within-window evolution $u\mapsto e^{\mu u}X_{\tau_{n}}^{\varepsilon,\vartheta_{0}}$, for $\vartheta\in\Theta$ define
\[
    \Phi_{n}(\vartheta):=\frac{1}{h_{n}}\int_{0}^{h_{n}}e^{2\mu\vartheta u}du=\frac{e^{2\mu\vartheta h_{n}}-1}{2\mu\vartheta h_{n}}.
\]
By Lemma~\ref{lem:CEV-holder-continuity}, over a sufficiently short time interval $[\tau_{n},\tau'_{n}]\subseteq[0,T]$, the local quadratic variation satisfies
\[
    \frac{[X^{\varepsilon,\vartheta_{0}}]_{\tau'_{n}}-[X^{\varepsilon,\vartheta_{0}}]_{\tau_{n}}}{\varepsilon^{2}\nu^{2}h_{n}}=\frac{1}{h_{n}}\int_{\tau_{n}}^{\tau'_{n}}\big(X_{s}^{\varepsilon,\vartheta_{0}}\big)^{2\vartheta_{0}}ds=\big(X_{\tau_{n}}^{\varepsilon,\vartheta_{0}}\big)^{2\vartheta_{0}}\Phi_{n}(\vartheta_{0})+O_{\mathbb{P}}(\varepsilon\sqrt{h_{n}}).
\]
Replacing the quadratic-variation increment by $\widehat{\mathcal{V}}_{\varepsilon,n}$ and taking logarithms motivates the preliminary estimator below. To ensure that the logarithmic criterion is well defined and to maintain nondegenerate local identification, define $\mathcal{E}_{\varepsilon,n}:=\big\{\widehat{\mathcal{V}}_{\varepsilon,n}>0,~X_{\tau_{n}}^{\varepsilon,\vartheta_{0}}\in(0,e^{-c_{0}/2}]\cup[e^{c_{0}/2},\infty)\big\}$. On $\mathcal{E}_{\varepsilon,n}$, define the raw preliminary
estimator by
\[
    \widetilde{\vartheta}_{\varepsilon,n}^{*}:=\operatorname*{arg~min}_{\vartheta\in\Theta}\Big[\log\frac{\widehat{\mathcal{V}}_{\varepsilon,n}}{\varepsilon^{2}\nu^{2}h_{n}}-2\vartheta\log X_{\tau_{n}}^{\varepsilon,\vartheta_{0}}-\log \Phi_{n}(\vartheta)\Big]^{2}.
\]
If the minimizer is not unique, select the smallest one. Fix an arbitrary $\vartheta^{\diamond}\in\Theta$. The preliminary estimator is defined by
\begin{equation}\label{preliminary-estimator}
    \widetilde{\vartheta}_{\varepsilon,n}:=
    \begin{cases}
        \widetilde{\vartheta}_{\varepsilon,n}^{*},
        & \text{on }\mathcal{E}_{\varepsilon,n},\\[1ex]
        \vartheta^{\diamond},
        & \text{on }\mathcal{E}_{\varepsilon,n}^{c}.
    \end{cases}
\end{equation}
\subsection{Local-to-CEV reduction and state mapping}\label{subsec:local-to-cev}
To motivate estimation through the transformed linear drift, we introduce a strictly positive auxiliary CKLS process as a theoretical benchmark. Under Assumption~\ref{ass:A3}, consider
\begin{equation}\label{Local-to-CEV-CKLS}
	dY_{t}^{\varepsilon,\vartheta}=\big(\varepsilon^{2}a^{*}+\mu Y_{t}^{\varepsilon,\vartheta}\big)dt+\varepsilon\nu\big(Y_{t}^{\varepsilon,\vartheta}\big)^{\vartheta}dW_{t},\qquad Y_{0}^{\varepsilon,\vartheta}=x_{0}>0.
\end{equation}
The additional vanishing local-to-CEV regularization $\varepsilon^{2}a^{*}$ ensures the CKLS strict positivity required for the CKLS--CIR transform (see Remark~\ref{rem:ckls-parameter-restrictions}) for each fixed $\varepsilon>0$ while keeping its first-order small-noise behavior the same as the target CEV process: at $\vartheta=\vartheta_{0}$, couple the two processes using the same Brownian motion and initial value. Lemma~\ref{lem:uniform-deterministic-limit} shows that both processes remain in a common compact subset of $(0,\infty)$ over $[0,T]$ with probability tending to $1$. Combining this localization with the $L^{2}$ approximation bound in Lemma~\ref{lem:local-to-CEV-process-approx} yields
\begin{equation}\label{process-level-estiamte}
	\sup_{0\leq t\leq T}\big|Y_{t}^{\varepsilon,\vartheta_{0}}-X_{t}^{\varepsilon,\vartheta_{0}}\big|=O_{\mathbb{P}}(\varepsilon^{2})=o_{\mathbb{P}}(\varepsilon).
\end{equation}
For $x\geq 0$ and $\vartheta\in\Theta$, define
\[
    \mathcal{T}(x;\vartheta):=\frac{\zeta^{2}}{4(1-\vartheta)^{2}}x^{2(1-\vartheta)},\qquad\zeta>0,
\]
This mapping is continuous on $[0,\infty)$ and $C^{2}$ on
$(0,\infty)$, with $\mathcal{T}(0;\vartheta)=0$. Here, $\zeta$ is an arbitrary constant that determines the scaling of the transformed state variable and cancels from the final elasticity estimator and its asymptotic distribution. Moreover, this mapping $\mathcal{T}$ is a rescaled square of the Lamperti transform and is chosen in accordance with the CKLS--CIR construction developed in Appendix~\ref{aaa}. 

At the true elasticity value $\vartheta_{0}$, define the transformed processes
\[
    Z_{t}^{Y,\varepsilon,\circ}:=\mathcal{T}\big(Y_{t}^{\varepsilon,\vartheta_{0}};\vartheta_{0}\big),\qquad Z_{t}^{X,\varepsilon,\circ}:=\mathcal{T}\big(X_{t}^{\varepsilon,\vartheta_{0}};\vartheta_{0}\big).
\]
The CKLS--CIR transform results developed in Appendix~\ref{aaa}, comprising the state mapping and the Girsanov change of measure, show that under an auxiliary equivalent measure $\mathbb{Q}^{\varepsilon}$, with $\widetilde{W}^{\varepsilon,\vartheta_{0}}$ a standard Brownian motion, $Z^{Y,\varepsilon,\circ}$ satisfies the following strictly positive CIR-type dynamics:
\[
    dZ_{t}^{Y,\varepsilon,\circ}=\big(\alpha_{\varepsilon}-\beta_{0}Z_{t}^{Y,\varepsilon,\circ}\big)dt+\varepsilon\gamma\sqrt{Z_{t}^{Y,\varepsilon,\circ}}d\widetilde{W}_{t}^{\varepsilon},\quad\text{where}\quad\beta_{0}=-2\mu(1-\vartheta_{0}), \quad\gamma=\nu\zeta, \quad\alpha_{\varepsilon}=\varepsilon^{2}\nu^{2}\zeta^{2}/2.
\]
Note that this is the boundary case $2\alpha_{\varepsilon}=\varepsilon^{2}\nu^{2}\zeta^{2}=\varepsilon^{2}\gamma^{2}$ of Feller condition. The auxiliary process and the Girsanov change of measure together provide a strictly positive CIR benchmark for localized asymptotic analysis. This allows the establishment of the oracle asymptotics without directly confronting the boundary degeneracy associated with the target CEV process hitting and being absorbed at $0$.

The resulting drift--elasticity relation, providing an indirect route to estimating $\vartheta$ through $\beta(\vartheta)$, is
\begin{equation}\label{elasticity-beta-relation}
    \vartheta=1+\frac{\beta(\vartheta)}{2\mu},\qquad\beta(\vartheta)=-2\mu(1-\vartheta)>0.
\end{equation}

\subsection{Feasible estimator and oracle benchmarks}\label{subsec:feasible-oracles}
Applying $\mathcal{T}$ to the observed CEV data with the preliminary estimate gives the feasible transformed observations
\begin{equation}\label{feasible-transformed-data}
    \widehat{Z}_{t_{i}}^{\varepsilon}:=\mathcal{T}\big(X_{t_{i}}^{\varepsilon,\vartheta_{0}};\widetilde{\vartheta}_{\varepsilon,n}\big),\qquad i=0,\ldots,n.
\end{equation}
The feasible LSE-type estimator of the transformed linear-drift parameter is
\[
    \widehat{\beta}_{\varepsilon,n}:=\frac{\alpha_{\varepsilon}\Delta_{n}\sum_{i=1}^{n}\widehat{Z}_{t_{i-1}}^{\varepsilon}-\sum_{i=1}^{n}\widehat{Z}_{t_{i-1}}^{\varepsilon}\Delta_{i}\widehat{Z}^{\varepsilon}}{\Delta_{n}\sum_{i=1}^{n}\big(\widehat{Z}_{t_{i-1}}^{\varepsilon}\big)^{2}},\quad\text{with}\quad\alpha_{\varepsilon}:=\frac{\varepsilon^{2}\nu^{2}\zeta^{2}}{2}\quad\text{and}\quad\Delta_{i}\widehat{Z}^{\varepsilon}:=\widehat{Z}_{t_{i}}^{\varepsilon}-\widehat{Z}_{t_{i-1}}^{\varepsilon}.
\]
Finally, by \eqref{elasticity-beta-relation}, the proposed two-step estimator of the CEV elasticity is
\begin{equation}\label{final-elasticity-estimator}
    \widehat{\vartheta}_{\varepsilon,n}:=1+\frac{\widehat{\beta}_{\varepsilon,n}}{2\mu}.
\end{equation}
Following \cite{long2013least,long2017least} (see also, \cite{shimizu2017threshold}), we adopt the unweighted Euler-type least-squares construction for discretely observed small-noise processes. This construction is convenient here because the observed CEV process may be absorbed at $0$. It avoids division by individual observations, while its denominator remains strictly positive because it includes the transformed initial observation $\widehat{Z}_{0}^{\varepsilon}=\mathcal{T}(x_{0},\widetilde{\vartheta}_{\varepsilon,n})=[\zeta^{2}x_{0}^{2(1-\widetilde{\vartheta}_{\varepsilon,n})}]/[4(1-\widetilde{\vartheta}_{\varepsilon,n})^{2}]>0$. 

For a discrete path $U=\{U_{t_{i}}\}_{i=0}^{n}$, write $\Delta_{i}U:=U_{t_{i}}-U_{t_{i-1}}$ and define the path functional
\begin{equation}\label{path-functional-B} 
    \mathcal{B}_{\varepsilon,n}(U):=\frac{\alpha_{\varepsilon}\Delta_{n}\sum_{i=1}^{n}U_{t_{i-1}}-\sum_{i=1}^{n}U_{t_{i-1}}\Delta_{i}U}{\Delta_{n}\sum_{i=1}^{n}\big(U_{t_{i-1}}\big)^{2}}.
\end{equation}
The CEV-based oracle linear-drift estimator, the auxiliary oracle linear-drift estimator and the feasible LSE-type estimator of the transformed linear-drift parameter are
\[
     \widehat{\beta}_{\varepsilon,n}^{X,\circ}:=\mathcal{B}_{\varepsilon,n}(Z^{X,\varepsilon,\circ}),\qquad\widehat{\beta}_{\varepsilon,n}^{Y,\circ}:=\mathcal{B}_{\varepsilon,n}
    (Z^{Y,\varepsilon,\circ}),\qquad\widehat{\beta}_{\varepsilon,n}=\mathcal{B}_{\varepsilon,n}(\widehat{Z}^{\varepsilon}).
\]

\section{Main Asymptotic Result}\label{sec3}
\begin{theorem}[Consistency and first-order asymptotic distribution]
\label{thm:asymptotic-distribution}
Suppose that Assumptions~\ref{ass:A1},~\ref{ass:A2},~\ref{ass:A3},~\ref{ass:A4},~\ref{ass:A5} hold. Then the feasible two-step estimator \eqref{final-elasticity-estimator} is consistent,
\[
    \widehat{\vartheta}_{\varepsilon,n}
    \xrightarrow{\mathbb{P}\text{-prob.}}\vartheta_{0},
\]
and has the first-order limit
\begin{equation}\label{elasticity-limit-variable}
    \varepsilon^{-1}\big(\widehat{\vartheta}_{\varepsilon,n}-\vartheta_{0}\big)\xrightarrow{\mathbb{P}\text{-prob.}}\xi_{\vartheta_{0}}, \quad\text{where}\quad\xi_{\vartheta_{0}}:=-\frac{\nu(1-\vartheta_{0})}{\mu}\frac{\int_{0}^{T}\mathcal{X}_{t}^{3-3\vartheta_{0}}dW_{t}}{\int_{0}^{T}\mathcal{X}_{t}^{4-4\vartheta_{0}}dt}, \quad\mathcal{X}_{t}=x_{0}e^{\mu t}.
\end{equation}
\begin{equation}\label{elasticity-asymptotic-variance}
    \mathcal{L}_{\mathbb{P}}(\xi_{\vartheta_{0}})=\mathcal{N}(0,V_{\vartheta_{0}}),\quad\text{with}\quad V_{\vartheta_{0}}:=\frac{\nu^{2}(1-\vartheta_{0})^{2}\int_{0}^{T}\mathcal{X}_{t}^{6-6\vartheta_{0}}dt}{\mu^{2}\big(\int_{0}^{T}\mathcal{X}_{t}^{4-4\vartheta_{0}}dt\big)^{2}}=\frac{8\nu^{2}(1-\vartheta_{0})^{3}}{-3\mu x_{0}^{2(1-\vartheta_{0})}}\frac{1-e^{6\mu(1-\vartheta_{0})T}}{\big[1-e^{4\mu(1-\vartheta_{0})T}\big]^{2}}.
\end{equation}
\end{theorem}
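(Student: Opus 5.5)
The plan is to reduce the feasible estimator to the CEV-based oracle, and the CEV-based oracle to an explicit small-noise expansion. The target is
\[
\varepsilon^{-1}(\widehat{\beta}_{\varepsilon,n}-\beta_{0})\to -2\mu\,\xi_{\vartheta_0}
\]
in $\mathbb{P}$-probability. Since $\widehat{\vartheta}_{\varepsilon,n}-\vartheta_0=(\widehat{\beta}_{\varepsilon,n}-\beta_0)/(2\mu)$, both consistency and the limit \eqref{elasticity-limit-variable} follow from this. The decomposition is
\[
\widehat{\beta}_{\varepsilon,n}-\beta_0=\big(\widehat{\beta}_{\varepsilon,n}-\widehat{\beta}^{X,\circ}_{\varepsilon,n}\big)+\big(\widehat{\beta}^{X,\circ}_{\varepsilon,n}-\widehat{\beta}^{Y,\circ}_{\varepsilon,n}\big)+\big(\widehat{\beta}^{Y,\circ}_{\varepsilon,n}-\beta_0\big).
\]
I will show that the first two terms are $o_{\mathbb{P}}(\varepsilon)$ and that the third, scaled by $\varepsilon^{-1}$, converges in probability to an explicit Wiener integral.

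\textbf{Plug-in term.} First I establish $\widetilde{\vartheta}_{\varepsilon,n}-\vartheta_0=o_{\mathbb{P}}(\varepsilon)$.
\begin{itemize}
\item By Lemma~\ref{lem:uniform-deterministic-limit} and the choice of $\tau$ in \ref{ass:A4}, $\mathbb{P}(\mathcal{E}_{\varepsilon,n})\to1$. The reason is that $X_{\tau_n}\to\mathcal{X}_\tau$ with $|\log\mathcal{X}_\tau|\ge c_0$.
\item Realized variance minus quadratic variation over the window is a martingale-type error. Relative to $\varepsilon^2 h_n$ it is $O_{\mathbb{P}}(\sqrt{\Delta_n/h_n})$.
\item The Hölder bound of Lemma~\ref{lem:CEV-holder-continuity} contributes $O_{\mathbb{P}}(\varepsilon\sqrt{h_n})$.
\item Hence the log-criterion residual at $\vartheta_0$ is $O_{\mathbb{P}}(\sqrt{\Delta_n/h_n}+\varepsilon\sqrt{h_n})=o_{\mathbb{P}}(\varepsilon)$ by \ref{ass:A5}.
\item Away from $\vartheta_0$, the criterion has derivative in $\vartheta$ bounded below by $2|\log X_{\tau_n}|-O(h_n)\ge c_0/2$. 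This gives identification and transfers the residual rate to $\widetilde{\vartheta}_{\varepsilon,n}$.
\end{itemize}
Next, $\mathcal{B}_{\varepsilon,n}$ is a smooth functional on paths bounded away from $0$ and $\infty$, which holds with probability tending to one. The map $\vartheta\mapsto\mathcal{T}(x;\vartheta)$ is $C^1$ uniformly on compacts. So $\widehat{Z}-Z^{X,\circ}=\partial_\vartheta\mathcal{T}\cdot(\widetilde\vartheta-\vartheta_0)+O((\widetilde\vartheta-\vartheta_0)^2)$.

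The delicate part is the stochastic sum $\sum \widehat Z_{t_{i-1}}\Delta_i\widehat Z$. The plug-in is a single random scalar, so $\widehat Z_t=c(\widetilde\vartheta)\,X_t^{2(1-\widetilde\vartheta)}$ is a deterministic smooth function of $(X_t,\widetilde\vartheta)$. I therefore write the numerator as a Riemann–Stieltjes-type sum that is Lipschitz in $\vartheta$ uniformly in $\vartheta$, with increments bounded via $\sum|\Delta_i X|=O_{\mathbb{P}}(1)$. This makes the first term $o_{\mathbb{P}}(\varepsilon)$.

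\textbf{Path-replacement term.} By \eqref{process-level-estiamte}, $\sup_t|Z^{X,\circ}_t-Z^{Y,\circ}_t|=O_{\mathbb{P}}(\varepsilon^2)$ on the localization event. The denominators are bounded below, so the only issue is again the cross-sum with increments. Here I use summation by parts:
\[
\sum U_{i-1}\Delta_iU=\tfrac12\Big(U_T^2-U_0^2-\sum(\Delta_iU)^2\Big).
\]
The endpoint terms differ by $O_{\mathbb{P}}(\varepsilon^2)$. The quadratic sums are each $O_{\mathbb{P}}(\varepsilon^2)$, and their difference is even smaller. So the second term is $O_{\mathbb{P}}(\varepsilon^2)=o_{\mathbb{P}}(\varepsilon)$. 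The same identity also simplifies the plug-in step above.

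\textbf{Auxiliary oracle term (main step).} I work under $\mathbb{Q}^\varepsilon$, where $Z^{Y,\circ}$ is a CIR process with drift $\alpha_\varepsilon-\beta_0 Z$ and noise $\varepsilon\gamma\sqrt Z$. Substituting the dynamics gives
\[
\widehat\beta^{Y,\circ}-\beta_0=-\frac{\varepsilon\gamma\int_0^T Z^{3/2}\,d\widetilde W}{\int_0^T Z^2\,dt}+R,
\]
with $R$ a discretization remainder. The bound $R=O_{\mathbb{P}}(\sqrt{\Delta_n}+\Delta_n/\varepsilon)$ is $o(\varepsilon)$ under $\Delta_n=o(\varepsilon^3)$, following the arguments of \cite{long2013least}.

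Replacing $Z$ by its deterministic limit $\mathcal{T}(\mathcal{X}_t;\vartheta_0)$ gives an $L^2$ limit. This step is justified because $\sup_t|Z_t-\mathcal{T}(\mathcal{X}_t;\vartheta_0)|\to0$.

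The expected main obstacle is transferring this limit back to $\mathbb{P}$ and to $W$. Total-variation closeness, i.e. the Girsanov density tending to $1$ in $L^1$ (Appendix~\ref{aaa}), transfers convergence in probability of events. However, the limit must be identified as a functional of $W$ itself, so I also need $\widetilde W-W\to0$ uniformly in probability. This holds because the Girsanov kernel is $O(\varepsilon)$ times bounded on the localization event. With that, $\int \mathcal{X}^{3-3\vartheta_0}d\widetilde W\to\int\mathcal{X}^{3-3\vartheta_0}dW$, since the integrand is deterministic and of bounded variation and integration by parts applies.

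Finally I collect constants. We have $\mathcal T(\mathcal X;\vartheta_0)^{3/2}\propto\mathcal X^{3-3\vartheta_0}$ and $\mathcal T^2\propto\mathcal X^{4-4\vartheta_0}$. With $\gamma=\nu\zeta$, the factors of $\zeta$ cancel, and dividing by $2\mu$ yields $\xi_{\vartheta_0}$. The variance formula follows from the Itô isometry and explicit exponential integrals.
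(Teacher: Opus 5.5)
Your three-term decomposition is the paper's, and so are the individual routes: the auxiliary oracle under $\mathbb{Q}^{\varepsilon}$ moved to $\mathbb{P}$ by total variation, path replacement by summation by parts, and plug-in via the preliminary rate. However, the plug-in step as you wrote it does not work. The claim $\sum_{i}|\Delta_i X^{\varepsilon,\vartheta_0}|=O_{\mathbb{P}}(1)$ is false in this regime. The martingale part of the increments alone contributes about $\varepsilon\nu\sqrt{2/\pi}\,\Delta_n^{-1/2}\int_0^T\mathcal{X}_t^{\vartheta_0}dt\asymp\varepsilon\sqrt{n}$, and $n\varepsilon^{3}\to\infty$ forces $\varepsilon\sqrt{n}\to\infty$. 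A Lipschitz-in-$\vartheta$ bound for $\sum_i\widehat Z_{t_{i-1}}\Delta_i\widehat Z$ built from total variation therefore only yields a plug-in error of order $\varepsilon\sqrt{n}\,|\widetilde\vartheta_{\varepsilon,n}-\vartheta_0|$. The preliminary error is typically of exact order $\sqrt{\Delta_n/h_n}=m_n^{-1/2}$ (the realized-variance sampling error), so this bound is of order $\varepsilon\sqrt{T/h_n}$, which is not $o(\varepsilon)$ because $h_n\to0$.

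The summation-by-parts identity you mention afterwards is therefore not a simplification but the missing key step. It rewrites $\partial_\vartheta$ of the numerator as $\alpha_\varepsilon\Delta_n\sum_i\dot Z_{t_{i-1}}-Z_T\dot Z_T+Z_0\dot Z_0+\sum_i\Delta_iZ\,\Delta_i\dot Z$, with $\dot Z=\partial_\vartheta\mathcal{T}(X^{\varepsilon,\vartheta_0};\vartheta)$. The last sum is at most $\tfrac12\sum_i[(\Delta_iZ)^2+(\Delta_i\dot Z)^2]=O_{\mathbb{P}}(\varepsilon^2+\Delta_n)$, uniformly in $\vartheta\in\Theta$ on a localization event, using only boundedness of $\partial_x\mathcal{T}$ and $\partial_{x\vartheta}\mathcal{T}$ on $K\times\Theta$. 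Combined with a uniform lower bound on the denominator, this gives $\sup_{\vartheta\in\Theta}|\partial_\vartheta\mathcal{G}_{\varepsilon,n}(\vartheta)|=O_{\mathbb{P}}(1)$, which is Theorem~\ref{thm:plugin-equivalence}. The mean value theorem then gives the $o_{\mathbb{P}}(\varepsilon)$ plug-in error pathwise, with no second-order Taylor term needed.

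There are also two smaller problems. First, realized variance minus quadratic variation is not only a martingale error. The drift part of the increments produces $\mu^2\sum_iA_i^2\approx\mu^2(X_{\tau_n}^{\varepsilon,\vartheta_0})^2h_n\Delta_n$ and a cross term. After dividing by $\varepsilon^2\nu^2h_n$, these give relative errors $\Delta_n/\varepsilon^2$ and $\Delta_n/(\varepsilon\sqrt{h_n})$ (Lemma~\ref{lem:local-RV-QV-approximation}). They need not be dominated by $\sqrt{\Delta_n/h_n}$, and they are $o(\varepsilon)$ precisely because of the condition $\Delta_n=o(\varepsilon^3)$ in Assumption~\ref{ass:A5}, which your argument otherwise never uses. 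Second, the target should be $\varepsilon^{-1}(\widehat\beta_{\varepsilon,n}-\beta_0)\to2\mu\,\xi_{\vartheta_0}=\xi_{\beta_0}$, not $-2\mu\,\xi_{\vartheta_0}$. The sign matters for convergence in probability, although your final computation uses the correct one.

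The rest is sound and in places leaner than the paper. In the path-replacement step it suffices that each quadratic sum is $O_{\mathbb{P}}(\varepsilon^2+\Delta_n)=o_{\mathbb{P}}(\varepsilon)$. So the paper's sharper bound $\sum_i(\Delta_i\delta Z^\varepsilon)^2=O_{\mathbb{P}}(\varepsilon^4)$ is not needed, and your ``even smaller'' claim, though unjustified, is superfluous. Likewise, controlling $\int_0^T(\mathcal{Z}_s)^{3/2}q_s^\varepsilon ds$ via the pathwise bound $|q^\varepsilon|\le C\varepsilon$ on the localization event is an equally valid substitute for the paper's Girsanov-energy bound.
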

\subsection{Proof of the main theorem}
\begin{remark}[Structure of the proof]\label{rem:estimator-hierarchy}
The argument proceeds from the auxiliary CIR oracle under $\mathbb{Q}^{\varepsilon}$ to the same estimator under $\mathbb{P}$, then replaces the auxiliary CKLS path by the observed CEV path and the unknown elasticity $\vartheta_{0}$ by its preliminary estimate. Appendix~\ref{aaa} justifies the measure transfer, and Appendix~\ref{bbb} proves the oracle limit and the two approximation bounds: $\widehat{\beta}_{\varepsilon,n}^{Y,\circ}
    \text{ under }\mathbb{Q}^{\varepsilon}\to
    \widehat{\beta}_{\varepsilon,n}^{Y,\circ}
    \text{ under }\mathbb{P}\to
    \widehat{\beta}_{\varepsilon,n}^{X,\circ}
    \text{ under }\mathbb{P}\to
    \widehat{\beta}_{\varepsilon,n}
    \text{ under }\mathbb{P}$.
\end{remark}
\textit{Proof of Theorem~\ref{thm:asymptotic-distribution}.} By Theorem~\ref{thm:auxiliary-oracle-lse}, the auxiliary CIR oracle has a first-order limit under $\mathbb{Q}^{\varepsilon}$. The local-to-CEV scaling
gives total-variation closeness between $\mathbb{Q}^{\varepsilon}$ and $\mathbb{P}$ (Theorem~\ref{thm:total-variation-closeness}). Applying Theorem~\ref{thm:transfer-convergence} therefore yields
\begin{equation}\label{auxiliary-oracle-limit-main-proof}
    \varepsilon^{-1}\big(\widehat{\beta}_{\varepsilon,n}^{Y,\circ}-\beta_{0}\big)\xrightarrow{\mathbb{P}\text{-prob.}}\xi_{\beta_{0}},\qquad\text{where}\quad\xi_{\beta_{0}}=-2\nu(1-\vartheta_{0})\frac{\int_{0}^{T}\mathcal{X}_{t}^{3-3\vartheta_{0}}dW_{t}}{\int_{0}^{T}\mathcal{X}_{t}^{4-4\vartheta_{0}}dt}.
\end{equation}
Here Assumption~\ref{ass:A5} implies the required oracle discretization condition $\Delta_{n}=o(\varepsilon)$.

Next, Theorem~\ref{thm:auxiliary-cev-oracle-equivalence} gives the path-replacement bound
\begin{equation}\label{auxiliary-cev-equivalence-main-proof}
    \widehat{\beta}_{\varepsilon,n}^{X,\circ}-\widehat{\beta}_{\varepsilon,n}^{Y,\circ}=o_{\mathbb{P}}(\varepsilon),
\end{equation}
For the plug-in step, Theorem~\ref{thm:preliminary-estimator-rate} and Assumptions~\ref{ass:A4},~\ref{ass:A5} give
\[
	\widetilde{\vartheta}_{\varepsilon,n}-\vartheta_{0}=o_{\mathbb{P}}(\varepsilon).
\]
Together with the uniform parameter-sensitivity bound of Theorem~\ref{thm:plugin-equivalence}, this implies
\begin{equation}\label{plugin-equivalence-main-proof}
    \widehat{\beta}_{\varepsilon,n}-\widehat{\beta}_{\varepsilon,n}^{X,\circ}=o_{\mathbb{P}}(\varepsilon).
\end{equation}
Combining \eqref{auxiliary-oracle-limit-main-proof}, \eqref{auxiliary-cev-equivalence-main-proof}, and \eqref{plugin-equivalence-main-proof}, 
\[
    \varepsilon^{-1}\big(\widehat{\beta}_{\varepsilon,n}-\beta_{0}\big)=\varepsilon^{-1}\big(\widehat{\beta}_{\varepsilon,n}-\widehat{\beta}_{\varepsilon,n}^{X,\circ}\big)+\varepsilon^{-1}\big(\widehat{\beta}_{\varepsilon,n}^{X,\circ}-\widehat{\beta}_{\varepsilon,n}^{Y,\circ}\big)+\varepsilon^{-1}\big(\widehat{\beta}_{\varepsilon,n}^{Y,\circ}-\beta_{0}\big)\xrightarrow{\mathbb{P}\text{-prob.}}\xi_{\beta_{0}},
\]
and using \eqref{final-elasticity-estimator} and \eqref{elasticity-beta-relation}, we obtain $\varepsilon^{-1}(\widehat{\vartheta}_{\varepsilon,n}-\vartheta_{0})=(2\mu)^{-1}\varepsilon^{-1}(\widehat{\beta}_{\varepsilon,n}-\beta_{0})\xrightarrow{\mathbb{P}\text{-prob.}}\xi_{\vartheta_{0}}$. Consistency follows because $\varepsilon\to0$. The integrand in \eqref{elasticity-limit-variable} is deterministic, so the limit is centered Gaussian with variance given by  \eqref{elasticity-asymptotic-variance}. \hfill\qedsymbol

The preliminary estimator is $o_{\mathbb{P}}(\varepsilon)$-accurate under the stated assumptions. The second step supplies the transformation-based Gaussian limit above; it is not asserted to improve the preliminary estimator's convergence rate.

\begin{remark}[Direct CEV mapping and the zero boundary] 
The CKLS--CIR construction involves inverse powers of the state in both the transformed dynamics and the Girsanov kernel, requiring control near $0$ when transferring asymptotic results to the absorbing CEV model with $1/2<\vartheta<1$. Define $\tau_{0}^{\varepsilon}:=\inf\{t\geq0:X_{t}^{\varepsilon,\vartheta_{0}}=0\}$. After localization away from $0$, It\^{o}'s formula applied to $Z_{t}^{X,\varepsilon,\circ}=\mathcal{T}(X_{t}^{\varepsilon,\vartheta_{0}};\vartheta_{0})$ gives, for $t<\tau_{0}^{\varepsilon}$,
\[
    dZ_{t}^{X,\varepsilon,\circ}=\big(\alpha_{\varepsilon}^{\mathrm{CEV}}-\beta_{0}Z_{t}^{X,\varepsilon,\circ}\big)dt+\varepsilon\nu\zeta\sqrt{Z_{t}^{X,\varepsilon,\circ}}dW_{t},\qquad\text{where}\quad    \alpha_{\varepsilon}^{\mathrm{CEV}}=\frac{\varepsilon^{2}\nu^{2}\zeta^{2}(1-2\vartheta_{0})}{4(1-\vartheta_{0})}<0,
\]
and $\beta_{0}=-2\mu(1-\vartheta_{0})>0$. As noted in the property \eqref{prop:CEV-CIR}, this CIR-type identity has negative constant-drift and does not provide the strictly positive CIR benchmark required by our measure-change construction. It cannot hold unchanged beyond $\tau_{0}^{\varepsilon}$: the mapped process remains at $0$ after absorption, whereas the displayed drift at $0$ is $\alpha_{\varepsilon}^{\mathrm{CEV}}<0$. The absorbed continuation therefore requires a stopped formulation. Moreover, an equivalent change of measure cannot eliminate a positive-probability event on which the same mapped process hits $0$. The auxiliary construction in Subsection~\ref{subsec:local-to-cev} instead applies the CKLS--CIR transform of Appendix~\ref{aaa} to a strictly positive auxiliary process, providing the strictly positive CIR benchmark for localized oracle asymptotics.
\end{remark}

\section{Simulation Study}\label{sec4}
We examine the Gaussian approximation of the CEV-based oracle estimator, the finite-sample cost of the preliminary plug-in step, and sensitivity to the elasticity, local window, and numerical scheme. All estimates use simulated observations of the target CEV process; the oracle uses the true elasticity only in the state mapping.

\subsection{Design and diagnostics}\label{subsec:simulation-design}
We simulate the target CEV model $dX_{t}^{\varepsilon,\vartheta_{0}}=\mu X_{t}^{\varepsilon,\vartheta_{0}}dt+\varepsilon\nu(X_{t}^{\varepsilon,\vartheta_{0}})^{\vartheta_{0}}dW_{t}$ with $\mu=-0.8$, $\nu=0.7$, $\vartheta_{0}=0.70$, $X_{0}^{\varepsilon,\vartheta_{0}}=x_{0}=2$, and $T=1$. The preliminary estimator in \eqref{preliminary-estimator} uses $\tau=0.20$, $\Theta=[0.55,0.95]$, $\vartheta^{\diamond}=(0.55+0.95)/2=0.75$, and the exact deterministic within-window correction $\Phi_{n}(\vartheta)=(e^{2\mu\vartheta h_{n}}-1)/(2\mu\vartheta h_{n})$. We set $\zeta=1$ in the state mapping and $c_{0}=0.9|\log\mathcal{X}_{\tau}|$, where $\mathcal{X}_{\tau}=2e^{-0.16}\approx1.7043$ and $|\log\mathcal{X}_{\tau}|\approx0.5331$, ensuring local identification. Observations are sampled at $t_{i}=i\Delta_{n}$, with $\Delta_{n}=1/n$. We take $m_{n}$ as the nearest integer to $\Delta_{n}^{-2/3}$ and set $h_{n}=m_{n}\Delta_{n}$ and $\tau_{n}=\tau$ in all main designs. The quantity $\rho_{\varepsilon,n}:=\sqrt{\Delta_{n}/h_{n}}/\varepsilon\asymp\Delta_{n}^{1/3}/\varepsilon$ summarizes the joint restrictions in Assumption~\ref{ass:A5}: with this window choice, $\rho_{\varepsilon,n}\to0$ is equivalent to $\Delta_{n}=o(\varepsilon^{3})$. Table~\ref{tab:main-asymptotic-diagnostics} reports $7$ designs, each with $R=3{,}000$ replications. Paths are generated using a Milstein scheme with nonnegative projection and two internal steps per observation interval; once a simulated path reaches $0$, it remains there.

Let $\widehat{\vartheta}_{\varepsilon,n}^{X,\circ}:=1+\widehat{\beta}_{\varepsilon,n}^{X,\circ}/(2\mu)$ denote the CEV-based oracle estimator and $\widehat{\vartheta}_{\varepsilon,n}$ the feasible estimator in \eqref{final-elasticity-estimator}. The oracle drift estimate $\widehat{\beta}_{\varepsilon,n}^{X,\circ}$ is computed by applying the LSE functional \eqref{path-functional-B} to $Z_{t_{i}}^{X,\varepsilon,\circ}=\mathcal{T}(X_{t_{i}}^{\varepsilon,\vartheta_{0}};\vartheta_{0})$, using the same simulated CEV observations and the true elasticity in the state mapping. We report
\[
    S_{\varepsilon,n}^{\circ}:=\frac{\widehat{\vartheta}_{\varepsilon,n}^{X,\circ}-\vartheta_{0}}{\varepsilon\sqrt{V_{\vartheta_{0}}}},\qquad S_{\varepsilon,n}:=\frac{\widehat{\vartheta}_{\varepsilon,n}-\vartheta_{0}}{\varepsilon\sqrt{V_{\vartheta_{0}}}},\qquad D_{\varepsilon,n}^{\mathrm{plug}}:=\frac{\widehat{\vartheta}_{\varepsilon,n}-\widehat{\vartheta}_{\varepsilon,n}^{X,\circ}}{\varepsilon},
\]
where $V_{\vartheta_{0}}=\nu^{2}(1-\vartheta_{0})^{2}\int_{0}^{T}\mathcal{X}_{t}^{6-6\vartheta_{0}}dt/[\mu^{2}(\int_{0}^{T}\mathcal{X}_{t}^{4-4\vartheta_{0}}dt)^{2}]\approx0.058300$ at the baseline parameters. We report the means and standard deviations of $S^{\circ}$ and $S$, the empirical frequencies of $|S^{\circ}|\leq1.96$ and $|S|\leq1.96$, and the empirical root mean square of $D^{\mathrm{plug}}$. These coverage diagnostics use the true-parameter variance to assess the Gaussian approximation. To assess feasible inference, define
\[
    \widehat{V}_{\varepsilon,n}:=V_{\widetilde{\vartheta}_{\varepsilon,n}},\qquad S_{\varepsilon,n}^{\mathrm{st}}:=\frac{\widehat{\vartheta}_{\varepsilon,n}-\vartheta_{0}}{\varepsilon\sqrt{\widehat{V}_{\varepsilon,n}}},\qquad I_{\varepsilon,n}:=\big[\widehat{\vartheta}_{\varepsilon,n}-z_{0.975}\varepsilon\sqrt{\widehat{V}_{\varepsilon,n}},~\widehat{\vartheta}_{\varepsilon,n}+z_{0.975}\varepsilon\sqrt{\widehat{V}_{\varepsilon,n}}\big].
\]
Here $z_{0.975}$ is the standard normal $0.975$ quantile, so $I_{\varepsilon,n}$ is the nominal $95\%$ Wald interval. The function $V_\vartheta$ is the same deterministic-path variance evaluated at $\vartheta$, with the remaining model parameters treated as known. Since $\widetilde{\vartheta}_{\varepsilon,n}\xrightarrow{\mathbb{P}}\vartheta_{0}$ and $\vartheta\mapsto V_\vartheta$ is continuous and positive on $\Theta$, we have $\widehat{V}_{\varepsilon,n}\xrightarrow{\mathbb{P}}V_{\vartheta_{0}}>0$. Theorem~\ref{thm:asymptotic-distribution} and Slutsky's theorem then yield $S_{\varepsilon,n}^{\mathrm{st}}\xrightarrow{d}\mathcal{N}(0,1)$. For each of the $7$ main designs, we report the mean and standard deviation of $S_{\varepsilon,n}^{\mathrm{st}}$, interval coverage with its Monte Carlo standard error, and mean interval length. Intervals are not truncated to $\Theta$, and all replications, including boundary solutions of the preliminary estimator, are retained.

\subsection{Results of main simulation study: estimation of the CEV elasticity parameter}\label{subsec:main-simulation-results}
The oracle approximation is accurate throughout the $7$ designs: standardized standard deviations range from $1.001$ to $1.022$ and coverage from $94.00\%$ to $95.10\%$ (Table~\ref{tab:main-asymptotic-diagnostics}). The feasible estimator's larger departure from the standard normal benchmark is mainly overdispersion. As $\rho_{\varepsilon,n}$ decreases from $0.2321$ to $0.0489$, its standardized standard deviation falls from $1.825$ to $1.035$ and coverage rises from $72.43\%$ to $94.07\%$. Standardized means are small in magnitude but do not approach $0$ monotonically.

\begin{table}[!htbp]
\centering
\caption{Main designs and standardized estimation diagnostics ($R=3{,}000$).}
\label{tab:main-asymptotic-diagnostics}
\small
\setlength{\tabcolsep}{3.5pt}
\begin{tabular}{crrrrrrrrr}
\toprule
& & & \multicolumn{3}{c}{Oracle $S^{\circ}$} & \multicolumn{3}{c}{Feasible $S$} & \\
\cmidrule(lr){4-6}\cmidrule(lr){7-9}
$\varepsilon$ & $n$ & $\rho_{\varepsilon,n}$ & Mean & SD & Cov.\ (\%) & Mean & SD & Cov.\ (\%) & $\mathrm{RMSE}(D^{\mathrm{plug}})$\\
\midrule
$0.200$ & $10,000$ & $0.2321$ & $-0.133$ & $1.002$ & $94.70$ & $-0.112$ & 1.825 & $72.43$ & $0.3534$ \\
$0.150$ & $50,000$ & $0.1810$ & $-0.122$ & $1.022$ & $94.00$ & $-0.145$ & $1.508$ & $81.50$ & $0.2611$ \\
$0.100$ & $250,000$ & $0.1587$ & $-0.050$ & $1.014$ & $94.23$ & $-0.025$ & $1.403$ & $83.63$ & $0.2257$ \\
$0.075$ & $1,000,000$ & $0.1333$ & $-0.066$ & $1.010$ & $94.73$ & $-0.071$ & $1.291$ & $87.37$ & $0.1860$ \\
$0.050$ & $10,000,000$ & $0.0928$ & $-0.027$ & $1.003$ & $95.10$ & $-0.016$ & $1.137$ & $91.50$ & $0.1285$ \\
$0.040$ & $50,000,000$ & $0.0679$ & $-0.009$ & $1.019$ & $94.90$ & $-0.010$ & $1.097$ & $92.67$ & $0.0956$ \\
$0.035$ & $200,000,000$ & $0.0489$ & $-0.055$ & $1.001$ & $94.87$ & $-0.053$ & $1.035$ & $94.07$ & $0.0679$ \\
\bottomrule
\end{tabular}
\par\smallskip
\begin{minipage}{0.90\textwidth}
\footnotesize
\emph{Notes.} $\Delta_{n}=1/n$, $m_{n}=\operatorname{round}(\Delta_{n}^{-2/3})$, and $h_{n}=m_{n}\Delta_{n}$. Coverage is evaluated at the nominal $95\%$ level using $V_{\vartheta_{0}}$. Its Monte Carlo standard error is about $0.4$ percentage points near $95\%$, and $0.82$ points for the first feasible row.
\end{minipage}
\end{table}

At the finest design $(\varepsilon,n)=(0.035,2\times10^{8})$, the empirical densities and normal Q--Q plots in Figure~\ref{fig:standardized-distributions} closely match the standard normal benchmark, with mild overdispersion and tail departures for the feasible estimator. The standardized standard deviations are $1.001$ and $1.035$ for the oracle and feasible estimators, respectively.

\begin{figure}[!htbp]
\centering
\begin{subfigure}[t]{0.42\textwidth}
    \centering
    \includegraphics[width=\linewidth]{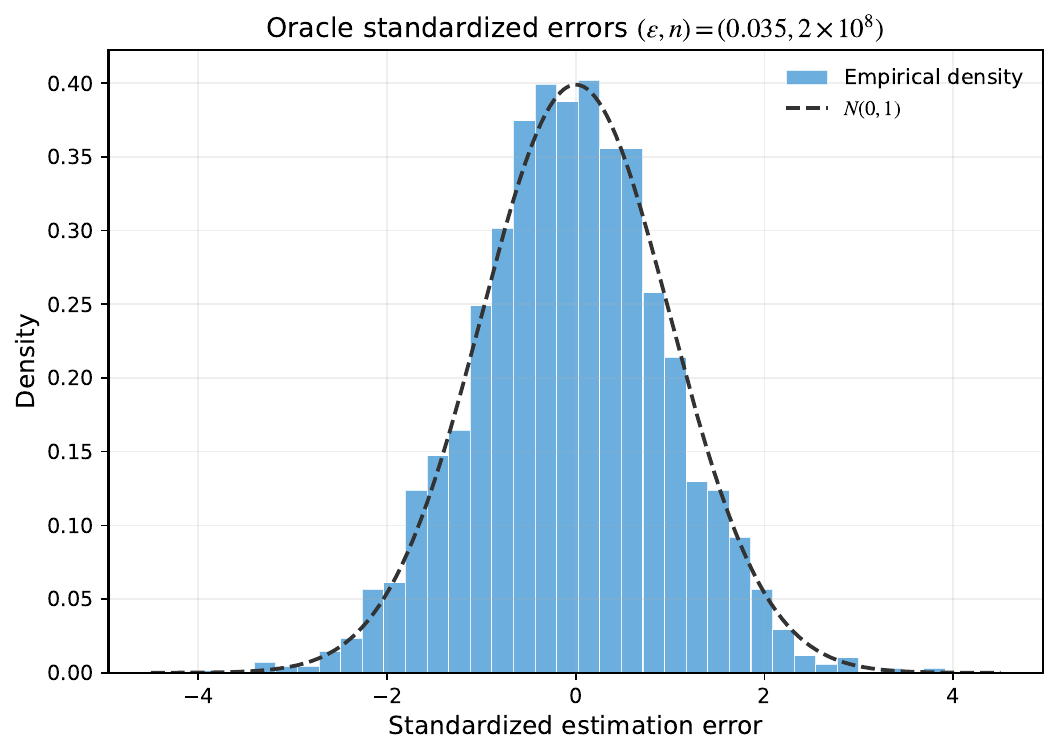}
    \caption{Oracle density.}
    \label{fig:oracle-hist}
\end{subfigure}
\hfill
\begin{subfigure}[t]{0.42\textwidth}
    \centering
    \includegraphics[width=\linewidth]{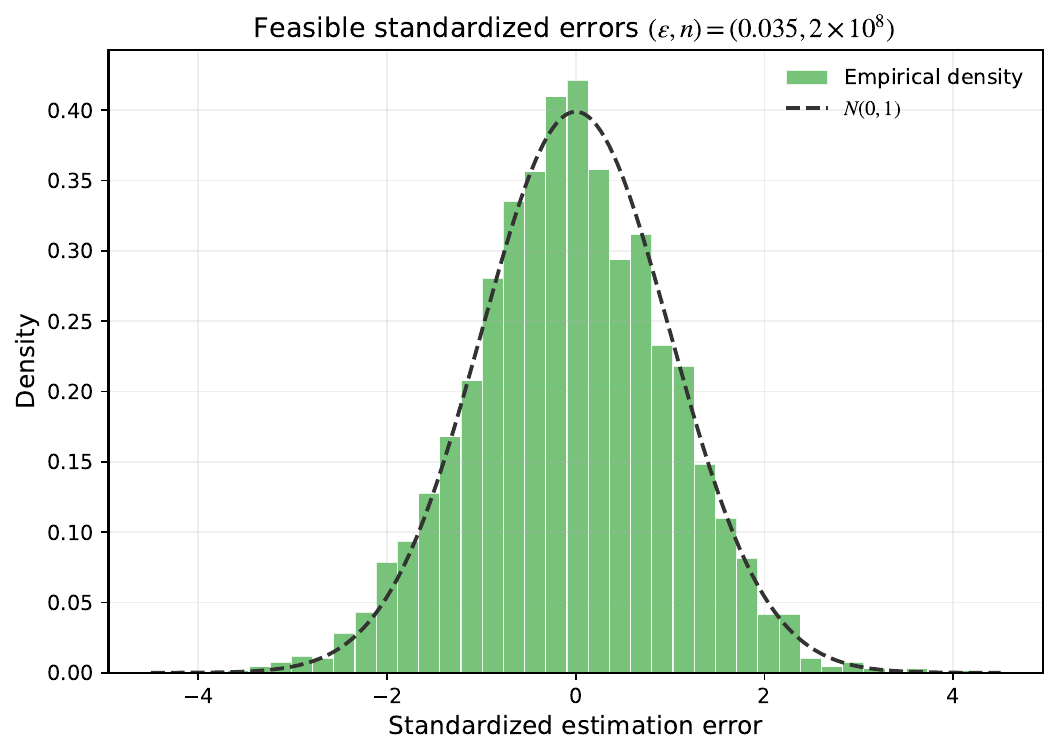}
    \caption{Feasible density.}
    \label{fig:feasible-hist}
\end{subfigure}

\begin{subfigure}[t]{0.42\textwidth}
    \centering
    \includegraphics[width=\linewidth]{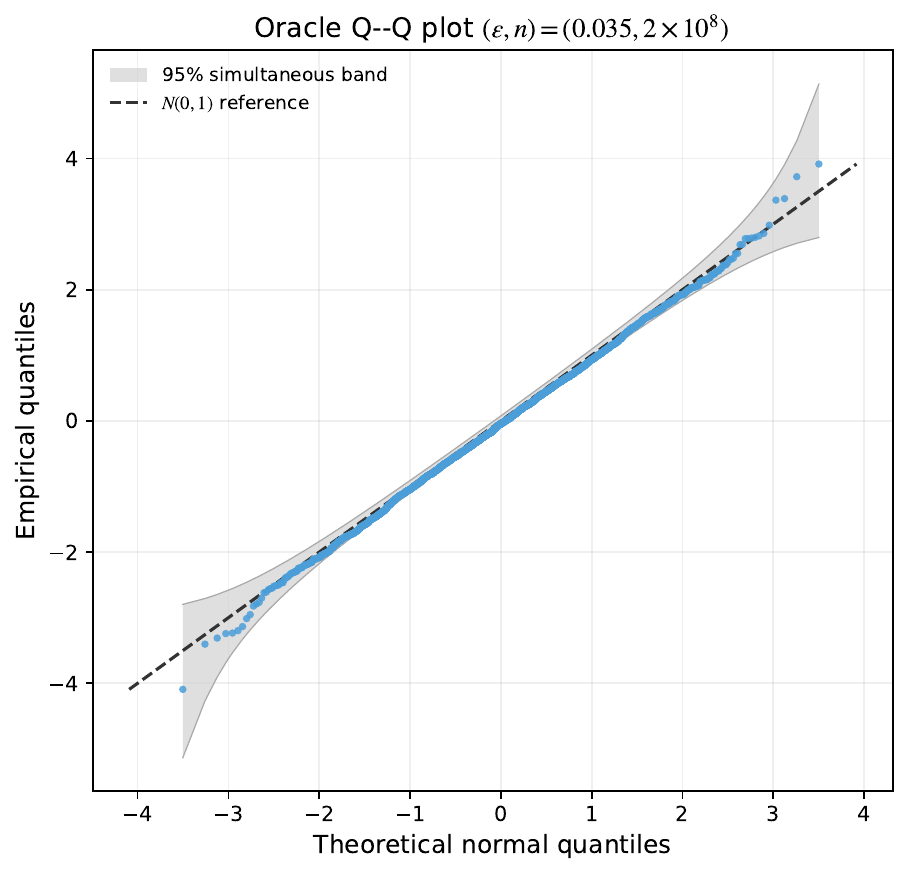}
    \caption{Oracle Q--Q plot.}
    \label{fig:oracle-qq}
\end{subfigure}
\hfill
\begin{subfigure}[t]{0.42\textwidth}
    \centering
    \includegraphics[width=\linewidth]{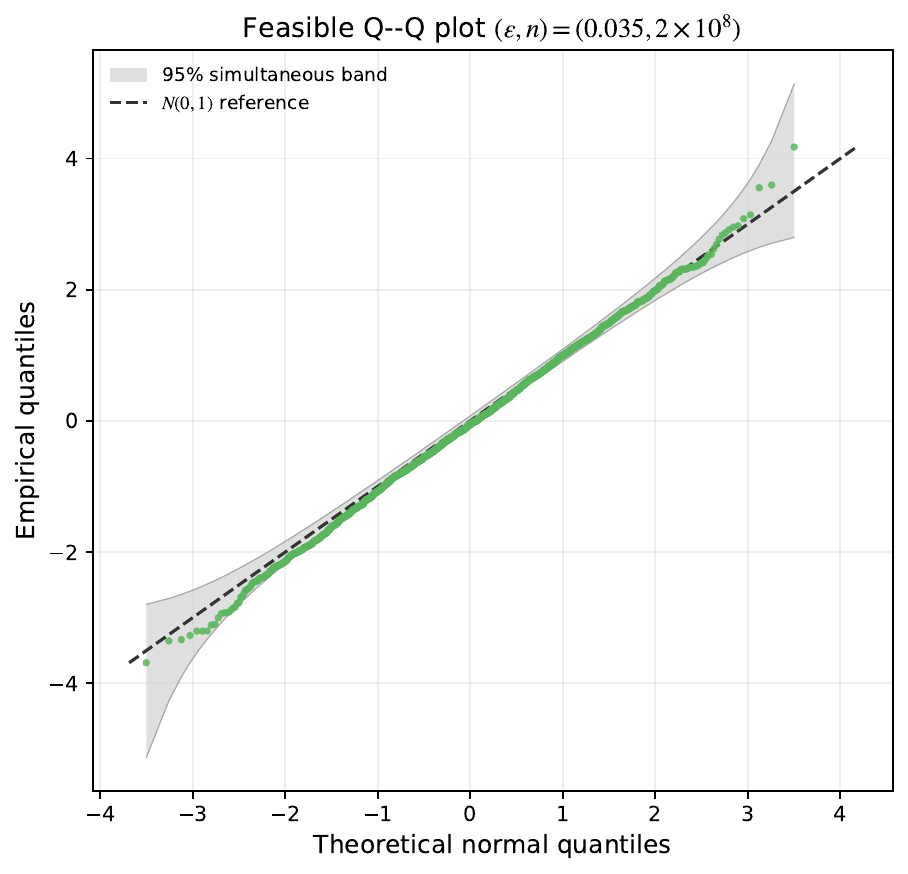}
    \caption{Feasible Q--Q plot.}
    \label{fig:feasible-qq}
\end{subfigure}
\caption{Standardized estimation errors at $(\varepsilon,n)=(0.035,2\times10^{8})$. The left and right halves correspond to
$\widehat{\vartheta}^{X,\circ}_{\varepsilon,n}$ and
$\widehat{\vartheta}_{\varepsilon,n}$, respectively. Dashed references show the $\mathcal{N}(0,1)$ density in (a)--(b) and $y=x$ in (c)--(d). Shading denotes 95\% simultaneous $\mathcal{N}(0,1)$ reference bands within each Q--Q panel, obtained using a two-sided equal-local-level construction calibrated from $20{,}000$ Monte Carlo samples of size $3{,}000$.}
\label{fig:standardized-distributions}
\end{figure}

\begin{figure}[!htbp]
\centering
\includegraphics[width=0.68\textwidth]{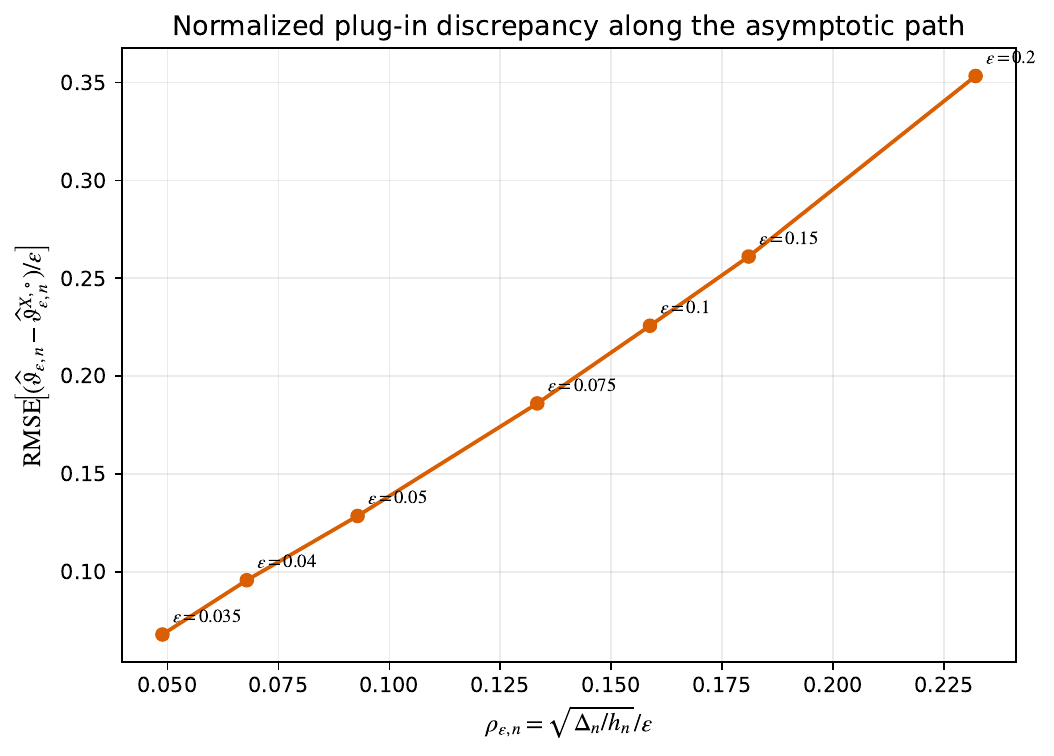}
\caption{Normalized plug-in RMSE against
$\rho_{\varepsilon,n}$ for the $7$ main designs. Points use the unrounded Monte Carlo RMSE estimates underlying Table~\ref{tab:main-asymptotic-diagnostics}.}
\label{fig:plugin-gap}
\end{figure}
Figure~\ref{fig:plugin-gap} shows that the normalized plug-in RMSE decreases from $0.3534$ to $0.0679$ as $\rho_{\varepsilon,n}$ decreases from $0.2321$ to $0.0489$. Across the $7$ designs, $\mathrm{RMSE}(D^{\mathrm{plug}})/\rho_{\varepsilon,n}$ ranges from approximately $1.384$ to $1.522$, while the gap between the oracle and feasible standardized standard deviations also narrows. These patterns are consistent with a diminishing plug-in effect along the joint asymptotic path. For the corrected preliminary estimator, the maximum absolute Monte Carlo bias is approximately $1.95\times10^{-3}$, falling below $2\times10^{-5}$ in the two finest designs. The event $\mathcal{E}_{\varepsilon,n}$ occurred in every main replication, so the fallback was unused. The minimizer over $\Theta$ reached the lower boundary in $46/3{,}000$ replications ($1.53\%$) in the coarsest design and in one replication at $(\varepsilon,n)=(0.15,50{,}000)$, with no other boundary solutions observed. All replications, including boundary solutions, were retained in the reported summaries, which therefore include the effects of the parameter constraints.

\begin{table}[!htbp]
\centering
\small
\setlength{\tabcolsep}{4pt}
\caption{Feasible studentization and nominal 95\% Wald intervals
($R=3{,}000$).}
\label{tab:studentized-diagnostics}
\begin{tabular}{@{}rrrrrrrr@{}}
\toprule
$\varepsilon$
& $n$
& $\mathrm{Mean}(S^{\mathrm{st}})$
& $\mathrm{SD}(S^{\mathrm{st}})$
& \shortstack{Cov. (\%)\\$V_{\vartheta_{0}}$}
& \shortstack{Cov. (\%)\\$\widehat{V}_{\varepsilon,n}$}
& \shortstack{MCSE\\(pp)}
& \shortstack{Mean\\length}\\
\midrule
$0.200$ & $10,000$ & $0.254$ & $2.121$ & $72.43$ & $71.87$ & $0.82$ & $0.18747$ \\
$0.150$ & $50,000$ & $-0.013$ & $1.536$ & $81.50$ & $80.37$ & $0.73$ & $0.14206$ \\
$0.100$ & $250,000$ & $0.041$ & $1.409$ & $83.63$ & $83.60$ & $0.68$ & $0.09441$ \\
$0.075$ & $1,000,000$ & $-0.037$ & $1.291$ & $87.37$ & $87.17$ & $0.61$ & $0.07099$ \\
$0.050$ & $10,000,000$ & $-0.005$ & $1.138$ & $91.50$ & $91.53$ & $0.51$ & $0.04730$ \\
$0.040$ & $50,000,000$ & $-0.005$ & $1.096$ & $92.67$ & $92.73$ & $0.47$ & $0.03786$ \\
$0.035$ & $200,000,000$ & $-0.051$ & $1.034$ & $94.07$ & $94.10$ & $0.43$ & $0.03312$ \\
\bottomrule
\end{tabular}

\par\smallskip
\begin{minipage}{\linewidth}
\footnotesize
\textit{Notes.} Both coverage columns concern $\widehat{\vartheta}_{\varepsilon,n}$, using $V_{\vartheta_{0}}$ or $\widehat{V}_{\varepsilon,n}$. For the latter, MCSE denotes the Monte Carlo standard error of coverage (percentage points), and mean interval length is on the elasticity scale. Intervals use the standard normal $0.975$ quantile without truncation to $\Theta$. All $3{,}000$ replications per design are retained, including boundary solutions of the preliminary estimator.
\end{minipage}
\end{table}

Table \ref{tab:studentized-diagnostics} reports the results obtained using the estimated variance $\widehat{V}_{\varepsilon,n}$. Along the $7$ main designs, the standard deviation of the studentized error decreases from $2.121$ to $1.034$, while interval coverage increases from $71.87\%$ to $94.10\%$ and mean interval length decreases from $0.18747$ to $0.03312$. For the five designs with $\varepsilon\leq0.10$, coverage differs from its true-variance counterpart by at most $0.20$ percentage points. At the finest design, coverage is $94.10\%$, compared with $94.07\%$ using $V_{\vartheta_{0}}$, with a Monte Carlo standard error of $0.43$ percentage points. These results support the feasibility of variance estimation for interval inference along the joint asymptotic sequence, although undercoverage remains pronounced in the coarser designs.

\subsection{Results of the additional simulation studies: sensitivity and numerical robustness}\label{subsec:simulation-robustness}
We examine the sensitivity of the finite-sample results to elasticity, local-window width, and numerical approximation. These comparisons help distinguish changes associated with the model parameter and preliminary estimation from those associated with the simulation scheme. Three additional studies use separate reproducible random-number streams, with common Brownian increments within each study. Their baseline results hence need not equal those of the main experiment. For the window comparison, set $m_{n}=\operatorname{round}(c_{h}\Delta_{n}^{-2/3})$, with baseline $c_{h}=1$. Across these studies, the exceptional-event fallback was unused and no numerical failures or excluded replications were recorded. The Euler comparison uses the same nonnegative projection as in the main simulation study.

\begin{table}[H]
\centering
\caption{Sensitivity to elasticity, local-window width, and numerical scheme.}
\label{tab:sensitivity-condensed}
\small
\setlength{\tabcolsep}{3.5pt}
\renewcommand{\arraystretch}{1.08}

\begin{tabular}{@{}c|cccccccc@{}}
\toprule
Study & Setting & $\rho_{\varepsilon,n}$ & $V_{\vartheta_{0}}$ & $\mathrm{SD}(\widetilde{\vartheta})$ & $\mathrm{SD}(S^{\circ})$ & $\mathrm{SD}(S)$ & Cov.\ (\%) & $\mathrm{RMSE}(D^{\mathrm{plug}})$\\
\hline

\multirow{3}{*}{%
  \shortstack[c]{%
    \emph{A. Elasticity}\\
    $R=1{,}500$\\
    $c_h=1$}}
& $\vartheta_{0}=0.60$ & $0.1587$ & $0.09830$ & $0.02128$
& $1.025$ & $1.242$ & $88.13$ & $0.2175$\\
& $\vartheta_{0}=0.70$ & $0.1587$ & $0.05830$ & $0.02170$
& $1.022$ & $1.387$ & $85.07$ & $0.2216$\\
& $\vartheta_{0}=0.80$ & $0.1587$ & $0.02735$ & $0.02206$
& $1.019$ & $1.725$ & $75.93$ & $0.2250$\\
\hline

\multirow{3}{*}{%
  \shortstack[c]{%
    \emph{B. Window width}\\
    $R=1{,}200$\\
    $\vartheta_{0}=0.70$}}
& $c_h=2^{-2/3}$ & $0.2000$ & $0.05830$ & $0.02643$
& $0.995$ & $1.535$ & $79.83$ & $0.2698$\\
& $c_h=1$ & $0.1587$ & $0.05830$ & $0.02108$
& $0.995$ & $1.357$ & $85.42$ & $0.2147$\\
& $c_h=1.5$ & $0.1296$ & $0.05830$ & $0.01869$
& $0.995$ & $1.282$ & $87.33$ & $0.1898$\\
\hline

Study & Scheme & $r$
& \multicolumn{3}{c}{Oracle paired RMSE}
& \multicolumn{3}{c@{}}{Feasible paired RMSE}\\
\hline

\multirow{3}{*}{%
  \shortstack[c]{%
    \emph{C. Numerics}\\
    $R=1{,}000$\\
    $\vartheta_{0}=0.70$}}
& Milstein & $1$ & \multicolumn{3}{c}{$3.498\times10^{-5}$} & \multicolumn{3}{c@{}}{$3.406\times10^{-4}$}\\ 
& Milstein & $4$ & \multicolumn{3}{c}{$1.747\times10^{-5}$} & \multicolumn{3}{c@{}}{$1.703\times10^{-4}$}\\
& Euler & $2$ & \multicolumn{3}{c}{$1.494\times10^{-4}$} & \multicolumn{3}{c@{}}{$4.041\times10^{-4}$}\\
\bottomrule
\end{tabular}

\par\smallskip
\begin{minipage}{0.83\textwidth}
\footnotesize
\emph{Notes.} Panels A and B use $(\varepsilon,n)=(0.10,250{,}000)$ and Milstein with two internal steps. Set $m_{n}=\operatorname{round}(c_h\Delta_{n}^{-2/3})$ and $h_{n}=m_{n}\Delta_{n}$; $\Phi_{n}$ uses the actual $h_{n}$. $\mathrm{SD}(\widetilde{\vartheta})$ is reported on the unstandardized elasticity scale. Coverage refers to the feasible estimator at the nominal $95\%$ level using the corresponding true-parameter variance. Panel B uses identical observations for all windows, so its oracle estimates coincide replication by replication. Panel C uses $(\varepsilon,n)=(0.15,50{,}000)$, where $r$ denotes the number of internal steps per observation interval. Paired RMSEs are computed relative to baseline Milstein with $r=2$, with differences normalized by $\varepsilon\sqrt{V_{\vartheta_{0}}}$. Feasible coverage is $80.00\%$ under all four numerical specifications.
\end{minipage}
\end{table}

\textbf{Elasticity.} At $(\varepsilon,n)=(0.10,250{,}000)$, increasing $\vartheta_{0}$ from $0.60$ to $0.80$ leaves oracle standardized dispersion nearly unchanged but increases feasible standardized dispersion and reduces coverage (Table~\ref{tab:sensitivity-condensed}, panel A). The plug-in RMSE changes only from $0.2175$ to $0.2250$, while $V_{\vartheta_{0}}$ decreases from $0.09830$ to $0.02735$. Since $S-S^{\circ}=D^{\mathrm{plug}}/\sqrt{V_{\vartheta_{0}}}$, a similar discrepancy on the $\varepsilon$ scale becomes larger relative to the oracle asymptotic standard deviation. This concerns standardized accuracy: the unstandardized feasible standard deviation decreases from $0.03894$ to $0.02854$. Meanwhile, the preliminary standard deviation changes only slightly, from $0.02128$ to $0.02206$, indicating relatively stable dispersion of the preliminary estimator on the elasticity scale. Preliminary biases remain below their Monte Carlo standard errors.

\textbf{Local window.}
At the same design with $\vartheta_{0}=0.70$, enlarging $c_{h}$ from $2^{-2/3}$ to $1.5$ reduces preliminary standard deviation from $0.02643$ to $0.01869$, while absolute bias remains below $3.8\times10^{-4}$. Feasible coverage improves from $79.83\%$ to $87.33\%$, and plug-in RMSE falls by $29.6\%$ (Table~\ref{tab:sensitivity-condensed} panel B). With identical observations across windows, oracle estimates coincide replication by replication, isolating the window's effect on preliminary estimation and the plug-in step. This is consistent with the reduction in $\sqrt{\Delta_{n}/h_{n}}$ outweighing the increase in $\varepsilon\sqrt{h_{n}}$ over the tested range in Theorem~\ref{thm:preliminary-estimator-rate}. It does not establish an optimal window, and undercoverage remains at the longest window.

\textbf{Numerical scheme.}
At $(\varepsilon,n,\vartheta_{0})=(0.15,50{,}000,0.70)$, $1{,}000$ paired replications compare Milstein with $r=1,2,4$ internal steps and Euler with $r=2$, using a common Brownian driver on the mesh $\Delta_{n}/4$. Relative to Milstein with $r=2$, the largest paired RMSE is approximately $4.041\times10^{-4}$ in units of $\varepsilon\sqrt{V_{\vartheta_{0}}}$ (Table~\ref{tab:sensitivity-condensed}, panel C). Feasible coverage is $80.00\%$ under every specification. The Milstein paired means are distinguishable from $0$ at Monte Carlo precision but small on the standardized scale. The tested changes in scheme and refinement have little effect on the finite-sample overdispersion at this configuration. These comparisons do not bound the error relative to the exact SDE solution.

\subsection{Summary of the Simulation Study}
Overall, the main designs support the oracle Gaussian approximation and a diminishing preliminary plug-in effect, with feasible interval coverage approaching the nominal level along the joint asymptotic sequence. The additional studies identify elasticity and window width as relevant to finite-sample standardized accuracy, while supporting numerical stability at the configuration tested.

\section*{Acknowledgements}
The second author was partially supported by JSPS KAKENHI Grant Numbers JP21K03358 and JST CREST JPMJCR14D7, Japan. The second author was supported by JSPS KAKENHI Grant Number JP16K00036.

\begin{appendix}
\makeatletter
\renewcommand{\@seccntformat}[1]{%
  \ifstrequal{#1}{section}
    {Appendix~\csname the#1\endcsname\quad}
    {\csname the#1\endcsname\quad}%
}
\makeatother

\section{The CKLS--CIR Transformation and Asymptotic Measure Transfer}\label{aaa}
For the CKLS model of Definition~\ref{def:CKLS} with $a,b,y_{0}>0$ and $k\in(1/2,1)$, this appendix develops the state mapping and measure change used in Subsection~\ref{subsec:local-to-cev}, yielding CIR-type dynamics (Definition~\ref{def:CIR}) with $\beta=2b(1-k)>0$. Under the local-to-CEV scaling, uniform inverse-moment bounds imply convergence of the Girsanov density to $1$ in $L^{1}$ and convergence of the measures in total variation, allowing asymptotic results to transfer to the original measure.

\subsection{State mapping and change of measure}\label{subsec:state-mapping}
To obtain a square-root diffusion coefficient for $Z_{t}^{(k)}:=\mathcal{T}(Y_{t};k)$, write $\zeta:=\gamma/\sigma>0$ and require $\partial_{y}\mathcal{T}(y;k)y^{k}=\zeta\sqrt{\mathcal{T}(y;k)}$. Solving this equation with zero integration constant gives
\begin{equation}\label{state-mapping-general-form}
	\mathcal{T}(y;k)=\frac{\zeta^{2}}{4(1-k)^{2}}y^{2(1-k)},\qquad y>0.
\end{equation}
Since $1-k>0$, this mapping satisfies the required square-root identity and is an increasing $C^{2}$-bijection of $(0,\infty)$ onto itself, with inverse $\mathcal{T}^{-1}(z;k)=[2(1-k)\sqrt{z}/\zeta]^{1/(1-k)}$. Its derivatives are $\partial_{y}\mathcal{T}(y;k)=\zeta^{2}y^{1-2k}/[2(1-k)]$ and $\partial_{yy}\mathcal{T}(y;k)=\zeta^{2}(1-2k)y^{-2k}/[2(1-k)]$. Using these expressions, It\^{o}'s formula, justified by localization, yields
\begin{equation}\label{mapped-dynamics-general-P}
	dZ_{t}^{(k)}=\Big[\frac{a\zeta^{2}}{2(1-k)}Y_{t}^{1-2k}-2b(1-k)Z_{t}^{(k)}+\frac{\sigma^{2}\zeta^{2}(1-2k)}{4(1-k)}\Big]dt+\sigma\zeta\sqrt{Z_{t}^{(k)}}dW_{t}.
\end{equation}
By invertibility of $\mathcal{T}$, $Z^{(k)}$ remains Markovian; denote its drift by $m_{\mathbb{P}}$. The diffusion coefficient has the CIR square-root form, but the drift is non-affine for $k\in(1/2,1)$. We retain the linear-drift coefficient $\beta=2b(1-k)$ and choose $\alpha=\sigma^{2}\zeta^{2}/2$, so that the target CIR dynamics satisfy the Feller condition with equality. To obtain this drift, define the Girsanov kernel, the shifted process and the associated stochastic exponential by
\begin{equation}\label{girsanov-kernel-general}
	q_{t}:=\frac{m_{\mathbb{P}}(Z_{t})-\alpha+\beta Z_{t}}{\sigma\zeta\sqrt{Z_{t}}}=\frac{a}{\sigma}Y_{t}^{-k}-\frac{\sigma}{2}Y_{t}^{k-1},\quad\widetilde{W}_{t}:=W_{t}+\int_{0}^{t}q_{s}ds,\quad\Lambda_{t}:=e^{-\int_{0}^{t}q_{s}dW_{s}-\frac{1}{2}\int_{0}^{t}q_{s}^{2}ds}.
\end{equation}
Since $Y$ is continuous, strictly positive and non-explosive, $\int_{0}^{T}q_{s}^{2}ds<\infty~a.s.$ for every finite $T$. Thus, $\Lambda$ is a well-defined, strictly positive local martingale. The following lemma establishes the validity of the measure change.

\begin{lemma}[Validity of the measure change]\label{lem:ckls-density-martingale}
For $k\in(1/2,1)$ and $T<\infty$, the stochastic exponential $\Lambda$ defined in \eqref{girsanov-kernel-general} is a strictly positive $\mathbb{P}$-martingale on $[0,T]$, with $\mathbb{E}^{\mathbb{P}}[\Lambda_{t}]=1$ for all $0\leq t\leq T$. Consequently, $d\mathbb{Q}/d\mathbb{P}|_{\mathcal{F}_{T}}=\Lambda_{T}$ defines an equivalent probability measure under which $\widetilde{W}$ is a standard Brownian motion on $[0,T]$.
\end{lemma}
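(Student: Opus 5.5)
The plan is the standard localization argument for stochastic exponentials whose kernel is unbounded only near the boundaries (in the spirit of Klebaner--Liptser and Cheridito--Filipovi\'c--Yor). Write $\Lambda=\mathcal{E}(-\int q\,dW)$. Since $\Lambda$ is a strictly positive local martingale, it is a supermartingale, so $\mathbb{E}^{\mathbb{P}}[\Lambda_{T}]\leq1$. It therefore suffices to prove $\mathbb{E}^{\mathbb{P}}[\Lambda_{T}]\geq1$. A supermartingale with constant expectation on $[0,T]$ is a martingale, which gives $\mathbb{E}^{\mathbb{P}}[\Lambda_{t}]=1$ for all $t\leq T$. Girsanov's theorem then shows that $\widetilde{W}$ is a $\mathbb{Q}$-Brownian motion on $[0,T]$, and equivalence of $\mathbb{Q}$ and $\mathbb{P}$ follows from $\Lambda_{T}>0$.

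\textbf{Localization.} For $N\geq1$ set $\tau_{N}:=\inf\{t\geq0:Y_{t}\notin(1/N,N)\}\wedge T$. On $[0,\tau_{N}]$ the kernel $q_{t}=\frac{a}{\sigma}Y_{t}^{-k}-\frac{\sigma}{2}Y_{t}^{k-1}$ is bounded by a constant $C_{N}$. Hence Novikov's condition holds for the stopped kernel $q\mathbf{1}_{[0,\tau_{N}]}$, and $\Lambda^{\tau_{N}}$ is a true martingale. This defines probability measures $\mathbb{Q}_{N}$ by $d\mathbb{Q}_{N}/d\mathbb{P}=\Lambda_{\tau_{N}}$. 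Then
\[
    \mathbb{E}^{\mathbb{P}}[\Lambda_{T}]\geq\mathbb{E}^{\mathbb{P}}[\Lambda_{T}\mathbf{1}_{\{\tau_{N}=T,\,Y\in(1/N,N)\text{ on }[0,T]\}}]=\mathbb{Q}_{N}\big(Y_{t}\in(1/N,N)\ \forall t\in[0,T]\big),
\]
because $\Lambda_{T}=\Lambda_{\tau_{N}}$ on this event.

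\textbf{Identification under $\mathbb{Q}_{N}$.} Under $\mathbb{Q}_{N}$, the process $W^{N}_{t}:=W_{t}+\int_{0}^{t\wedge\tau_{N}}q_{s}ds$ is a Brownian motion. By \eqref{mapped-dynamics-general-P} and the definition of $q$, up to $\tau_{N}$ the process $Z^{(k)}=\mathcal{T}(Y;k)$ solves
\[
    dZ=(\alpha-\beta Z)dt+\sigma\zeta\sqrt{Z}\,dW^{N},\qquad \alpha=\sigma^{2}\zeta^{2}/2,\quad \beta=2b(1-k)>0.
\]
The CIR equation enjoys pathwise uniqueness (Yamada--Watanabe, since $\sqrt{z}$ is $1/2$-H\"older). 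So the $\mathbb{Q}_{N}$-law of $Z$ stopped at $\tau_{N}$ equals the law of a genuine CIR process $\bar Z$ started at $\mathcal{T}(y_{0};k)$ and stopped on exiting $(\mathcal{T}(1/N;k),\mathcal{T}(N;k))$. Since $\mathcal{T}(\cdot;k)$ is an increasing bijection of $(0,\infty)$,
\[
    \mathbb{Q}_{N}\big(Y\in(1/N,N)\text{ on }[0,T]\big)=\mathbb{P}\big(\bar Z_{t}\in(\mathcal{T}(1/N;k),\mathcal{T}(N;k))\ \forall t\leq T\big).
\]

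\textbf{Passage to the limit.} The right-hand side tends to $1$ as $N\to\infty$, because the CIR process with $2\alpha\geq\gamma^{2}$, where here $2\alpha=\sigma^{2}\zeta^{2}=\gamma^{2}$, started at a positive point never hits $0$, and it does not explode (Definition~\ref{def:CIR}). Continuity of its paths on the compact interval $[0,T]$ then gives $\inf_{t\leq T}\bar Z_{t}>0$ and $\sup_{t\leq T}\bar Z_{t}<\infty$ almost surely. Hence $\mathbb{E}^{\mathbb{P}}[\Lambda_{T}]\geq1$, which completes the argument.

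The main obstacle is this last step. Novikov's criterion fails globally, since $q$ blows up like $Y^{-k}$ near $0$. Everything therefore rests on the boundary case of the Feller condition, which guarantees that $0$ is unattainable under the target measure. I would verify this using the scale function of the CIR diffusion: with $2\alpha/\gamma^{2}=1$, the scale density behaves like $z^{-1}e^{2\beta z/\gamma^{2}}$, whose integral diverges at $0$, so the boundary $0$ is non-attracting. The uniqueness-in-law identification is the other point needing care, but it is routine.
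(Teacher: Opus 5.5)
Your proof is correct, but it takes a different route from the paper.

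The paper does not localize by hand. It checks the Engelbert--Schmidt conditions and computes the drift $\check m(y)=\frac{\sigma^{2}}{2}y^{2k-1}-by$ of the auxiliary diffusion associated with the integrand $-q$. Its scale density $\check\rho(y)=y^{-1}\exp\{\mathcal{M}(y^{2(1-k)}-1)\}$ diverges at both ends, so this diffusion cannot exit $(0,\infty)$. The paper then cites Mijatovi\'c--Urusov, Corollary~2.2: given that $Y$ itself does not exit, the martingale property is equivalent to non-exit of the auxiliary diffusion.

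Your argument is in effect a self-contained proof of the sufficiency half of that criterion for this model. You bound $\mathbb{E}^{\mathbb{P}}[\Lambda_{T}]$ from below by $\mathbb{Q}_{N}$-probabilities of staying in $(1/N,N)$. You identify the stopped $\mathbb{Q}_{N}$-law through pathwise uniqueness for CIR, and then use non-attainability of $0$ at the Feller boundary $2\alpha=\gamma^{2}$.

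The two non-exit checks are the same fact in different coordinates. The paper's auxiliary diffusion is $\mathcal{T}^{-1}(\cdot;k)$ applied to your CIR process. Likewise, $\check\rho$ is your CIR scale density $z^{-1}e^{2\beta z/\gamma^{2}}$ pulled back by $\mathcal{T}$, up to the positive factor $2(1-k)e^{\mathcal{M}}$.

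The citation is shorter and comes with a necessary-and-sufficient criterion. Your version uses only Novikov, Girsanov, Yamada--Watanabe and the CIR boundary classification. It also shows directly why the choice $\alpha=\sigma^{2}\zeta^{2}/2$ is what makes the measure change valid.

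The price is the stopped-uniqueness step, which you should write out rather than call routine. One way is to continue $Z$ after $\tau_{N}$ by the strong CIR solution driven by $W^{N}$ and apply pathwise uniqueness on $[0,T]$. Then argue that the events ``$Z$ stays in $(\mathcal{T}(1/N;k),\mathcal{T}(N;k))$ on $[0,T]$'' and ``$\bar Z$ stays there'' coincide $\mathbb{Q}_{N}$-a.s. The reason is that an exit of $Z$ before $T$ would force $\bar Z_{\tau_{N}}$ onto the boundary.
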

\begin{proof}
Put $J=(0,\infty)$, $m(y)=a-by$, $\Sigma(y)=\sigma y^{k}$. Since $\Sigma>0$ and $m,\Sigma,q$ are continuous on $J$, $\Sigma^{-2},m\Sigma^{-2},q^{2}\Sigma^{-2}\in L^{1}_{\mathrm{loc}}(J)$ (Engelbert--Schmidt conditions). The auxiliary diffusion associated with the integrand $-q$, initially defined up to exit from $J$, has diffusion coefficient $\Sigma$ and drift
\[
	\check m(y):=m(y)-q(y)\Sigma(y)=\frac{\sigma^{2}}{2}y^{2k-1}-by.
\]
Its scale density, normalized at $1$, is
\[
	\check{\rho}(y)=\exp\Big\{-\int_{1}^{y}\frac{2\check m(z)}{(\Sigma(z))^{2}}dz\Big\}=\frac{1}{y}\exp\Big\{\mathcal{M}\big(y^{2(1-k)}-1\big)\Big\},\qquad\mathcal{M}:=\frac{b}{\sigma^{2}(1-k)}>0.
\]
Since $\check{\rho}(y)\geq e^{-M}/y$ for $0<y\leq1$ and $\check{\rho}(y)\geq1/y$ for $y\geq1$, its scale integrals diverge at both endpoints. Feller's explosion test therefore excludes exit of the auxiliary diffusion from $J$. The original diffusion also remains in $J$. Thus, \cite[Corollary~2.2]{mijatovic2012martingale}, applied with integrand $-q$, shows that $\Lambda$ is a true martingale on $[0,T]$. Since $\Lambda_{0}=1$, $\mathbb{E}^{\mathbb{P}}[\Lambda_{t}]=1$ for all $0\leq t\leq T$. The final assertion follows from the strict positivity of $\Lambda_{T}$ and Girsanov's theorem.
\end{proof}

\subsection{CIR representation and local-to-CEV specialization}
\begin{theorem}[CKLS--CIR transform]\label{thm:general-ckls-cir}
Let $Y$ be the CKLS process with $a,b,\sigma,y_{0}>0$ and $k\in(1/2,1)$, and fix $T<\infty$ and $\zeta>0$. Under the equivalent measure $\mathbb{Q}$ constructed above, $\widetilde{W}$ is a standard Brownian motion, and $Z_{t}^{(k)}:=\mathcal{T}(Y_{t};k)$ by \eqref{state-mapping-general-form} satisfies
\begin{equation}\label{general-cir-transformed}
	dZ_{t}^{(k)}=\Big[\frac{\sigma^{2}\zeta^{2}}{2}-2b(1-k)Z_{t}^{(k)}\Big]dt+\sigma\zeta\sqrt{Z_{t}^{(k)}}d\widetilde{W}_{t},\qquad Z_{0}^{(k)}=\mathcal{T}(y_{0};k)>0.
\end{equation}

\end{theorem}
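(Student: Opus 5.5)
The plan is to combine the $\mathbb{P}$-dynamics \eqref{mapped-dynamics-general-P} with Lemma~\ref{lem:ckls-density-martingale}. The Itô expansion already gives a square-root diffusion coefficient for $Z^{(k)}$. Substituting $dW_{t}=d\widetilde{W}_{t}-q_{t}dt$ then shifts the drift by exactly the amount built into the kernel $q$ in \eqref{girsanov-kernel-general}. The only nontrivial step is an algebraic check that the resulting drift is affine with the stated coefficients.

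First, I would record that by Lemma~\ref{lem:ckls-density-martingale}, $\Lambda$ is a true $\mathbb{P}$-martingale on $[0,T]$. Hence $\mathbb{Q}$ is equivalent to $\mathbb{P}$ on $\mathcal{F}_{T}$, and by Girsanov's theorem $\widetilde{W}=W+\int_{0}^{\cdot}q_{s}ds$ is a $\mathbb{Q}$-Brownian motion on $[0,T]$. Equivalence of the measures also transfers all $\mathbb{P}$-a.s. path properties to $\mathbb{Q}$, in particular:
\begin{itemize}
\item strict positivity and non-explosion of $Y$;
\item $\int_{0}^{T}q_{s}^{2}ds<\infty$;
\item the semimartingale identity \eqref{mapped-dynamics-general-P}, which was obtained by localization on compact subsets of $(0,\infty)$.
\end{itemize}
Stochastic integrals against $W$ and $\widetilde{W}$ coincide under $\mathbb{P}$ and $\mathbb{Q}$ up to the finite-variation correction, so the identity can be rewritten under $\mathbb{Q}$.

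Second, substitute $dW_{t}=d\widetilde{W}_{t}-q_{t}dt$ into \eqref{mapped-dynamics-general-P}. This gives
\[
dZ^{(k)}_{t}=\big[m_{\mathbb{P}}(Z_{t})-\sigma\zeta\sqrt{Z^{(k)}_{t}}\,q_{t}\big]dt+\sigma\zeta\sqrt{Z^{(k)}_{t}}\,d\widetilde{W}_{t}.
\]
By the definition of $q_{t}$, the drift equals $\alpha-\beta Z^{(k)}_{t}$ with $\alpha=\sigma^{2}\zeta^{2}/2$ and $\beta=2b(1-k)$.

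Third, verify the explicit form of $q_{t}$ claimed in \eqref{girsanov-kernel-general}. Using $\sqrt{Z^{(k)}_{t}}=\zeta Y_{t}^{1-k}/[2(1-k)]$, the non-affine drift term reduces as
\[
\frac{a\zeta^{2}Y_{t}^{1-2k}/[2(1-k)]}{\sigma\zeta\sqrt{Z^{(k)}_{t}}}=\frac{a}{\sigma}Y_{t}^{-k}.
\]
The constant term minus $\alpha$ is
\[
\frac{\sigma^{2}\zeta^{2}(1-2k)}{4(1-k)}-\frac{\sigma^{2}\zeta^{2}}{2}=-\frac{\sigma^{2}\zeta^{2}}{4(1-k)},
\]
and dividing it by $\sigma\zeta\sqrt{Z^{(k)}_{t}}$ gives $-\frac{\sigma}{2}Y_{t}^{k-1}$. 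These two pieces together confirm the stated formula for $q_{t}$ and the affine drift above.

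Finally, the initial value is $Z_{0}^{(k)}=\mathcal{T}(y_{0};k)>0$, which is deterministic and therefore the same under both measures.

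The main obstacle is the rigorous justification of the measure change, since $q$ is unbounded near $0$ and $\infty$ and Novikov's condition is unavailable. This is handled entirely by Lemma~\ref{lem:ckls-density-martingale}. Beyond that, the only care needed is that the localized Itô formula holds up to $T$, which follows because $Y$ stays in $(0,\infty)$ and the localizing stopping times increase to $\infty$.
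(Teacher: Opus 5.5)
Your proposal is correct and follows the paper's own proof: invoke Lemma~\ref{lem:ckls-density-martingale} to justify the measure change, then substitute $dW_{t}=d\widetilde{W}_{t}-q_{t}\,dt$ into \eqref{mapped-dynamics-general-P}. Your explicit check that $q_{t}$ reduces to $\frac{a}{\sigma}Y_{t}^{-k}-\frac{\sigma}{2}Y_{t}^{k-1}$, so that the shifted drift becomes $\frac{\sigma^{2}\zeta^{2}}{2}-2b(1-k)Z_{t}^{(k)}$, spells out the algebra that the paper's one-line proof leaves implicit, and it is correct.
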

\begin{proof}
By Lemma~\ref{lem:ckls-density-martingale}, substituting $dW_{t}=d\widetilde{W}_{t}-q_{t}dt$ into \eqref{mapped-dynamics-general-P}, with $q_{t}$ from \eqref{girsanov-kernel-general}, yields \eqref{general-cir-transformed}.
\end{proof}
The CIR parameters are $\beta=2b(1-k)$, $\alpha=\sigma^{2}\zeta^{2}/2$, $\gamma=\sigma\zeta$, with $2\alpha=\gamma^{2}=\sigma^{2}\zeta^{2}$. For known $b$, the relation $k=1-\beta/(2b)$ therefore converts an estimator $\widehat{\beta}$ into $\widehat{k}=1-\widehat\beta/(2b)$. The free parameter $\zeta$ scales $\alpha$ and $Z_{0}$ quadratically and $\gamma$ linearly, without changing $\beta$ or the Girsanov kernel.

\begin{remark}[Parameter restrictions]\label{rem:ckls-parameter-restrictions}
By \cite{ning2025ckls}: for $0<k<1/2$, the Yamada--Watanabe criterion does not apply, and $0$ is attainable; the singularities of $\partial_{yy}\mathcal{T}$ and the Girsanov kernel in \eqref{girsanov-kernel-general} require localization and separate boundary analysis. The endpoint $k=1/2$ is already of CIR type, with $0$ attainable if $2a<\sigma^{2}$, and is omitted here. At $k=1$, the logarithmic-square mapping fails global square-root matching. For $k>1$, the diffusion coefficient is $-\sigma\zeta\sqrt{Z}$; replacing $W$ by $-W$ corrects the sign, but $\beta=2b(1-k)<0$ remains outside the mean-reverting CIR framework.
\end{remark}

We now apply Theorem~\ref{thm:general-ckls-cir} to the auxiliary model \eqref{Local-to-CEV-CKLS}, with $a=\varepsilon^{2}a^{*}$, $b=-\mu$, $\sigma=\varepsilon\nu$, and $k=\vartheta\in\Theta\Subset(1/2,1)$. The Girsanov kernel becomes dependent on $\varepsilon$:
\[
	q_{t}^{\varepsilon,\vartheta}=\frac{\varepsilon a^{*}}{\nu}\big(Y_{t}^{\varepsilon,\vartheta}\big)^{-\vartheta}-\frac{\varepsilon\nu}{2}\big(Y_{t}^{\varepsilon,\vartheta}\big)^{\vartheta-1}=\varepsilon\Big[\frac{a^{*}}{\nu}\big(Y_{t}^{\varepsilon,\vartheta}\big)^{-\vartheta}-\frac{\nu}{2}\big(Y_{t}^{\varepsilon,\vartheta}\big)^{\vartheta-1}\Big].
\]
Let $\Lambda^{\varepsilon,\vartheta}$ denote the stochastic exponential in \eqref{girsanov-kernel-general} with this kernel. For each fixed $\varepsilon>0$, it defines an equivalent probability measure $\mathbb{Q}^{\varepsilon,\vartheta}$ by $\left.d\mathbb{Q}^{\varepsilon,\vartheta}/d\mathbb{P}\right|_{\mathcal{F}_{T}}=\Lambda_{T}^{\varepsilon,\vartheta}$. Under $\mathbb{Q}^{\varepsilon,\vartheta}$, $\widetilde{W}_{t}^{\varepsilon,\vartheta}:=W_{t}+\int_{0}^{t}q_{s}^{\varepsilon,\vartheta}ds$ is a standard Brownian motion, and $Z_{t}^{\varepsilon,\vartheta}:=\mathcal{T}(Y_{t}^{\varepsilon,\vartheta};\vartheta)$, by \eqref{general-cir-transformed}, satisfies
\begin{equation}\label{small-noise-cir-dynamics}
	dZ_{t}^{\varepsilon,\vartheta}=\Big[\frac{\varepsilon^{2}\nu^{2}\zeta^{2}}{2}+2\mu(1-\vartheta)Z_{t}^{\varepsilon,\vartheta}\Big]dt+\varepsilon\nu\zeta \sqrt{Z_{t}^{\varepsilon,\vartheta}}d\widetilde{W}_{t}^{\varepsilon,\vartheta},\qquad Z_{0}^{\varepsilon,\vartheta}=\mathcal{T}(x_{0};\vartheta)>0.
\end{equation}
Thus, $\beta(\vartheta)=-2\mu(1-\vartheta)$ gives $\vartheta=1+\beta(\vartheta)/(2\mu)$. Transferring asymptotic results as $\varepsilon\to0$ additionally requires quantitative control of $\Lambda_{T}^{\varepsilon,\vartheta_{0}}$ in \eqref{girsanov-kernel-general}, which is established below. Henceforth, we fix $\vartheta=\vartheta_{0}$ and write $\mathbb{Q}^{\varepsilon}=\mathbb{Q}^{\varepsilon,\vartheta_{0}}$.

\subsection{Inverse moments, Girsanov energy, total variation and transfer of limits}
Although the Girsanov kernel carries an explicit factor $\varepsilon$, it involves negative powers of the state and may be singular near $0$. Moreover, the CKLS coefficients $a_{\varepsilon}=\varepsilon^{2}a^{*}$ and
$\sigma_{\varepsilon}=\varepsilon\nu$ vary with $\varepsilon$, so a
bound uniform in $\varepsilon$ is not immediate. We therefore establish
the following inverse-moment estimate for the local-to-CEV small-noise CKLS process.

\begin{lemma}[Uniform inverse-moment bound]\label{lem:uniform-inverse-moment-small-noise}
Under Assumptions~\ref{ass:A2} and~\ref{ass:A3}, for every $r>0$, $T\in(0,\infty)$, and $\varepsilon_{0}\in(0,1]$, there exists a constant $C_{r,T,\varepsilon_{0}}=C(r,T,\varepsilon_{0},a^{*},\mu,\nu,\vartheta_{0},x_{0})<\infty$ such that
\[
    \sup_{0<\varepsilon\leq\varepsilon_{0}}\sup_{0\leq t\leq T}\mathbb{E}^{\mathbb{P}}\big[\big(Y_{t}^{\varepsilon,\vartheta_{0}}\big)^{-r}\big]\leq C_{r,T,\varepsilon_{0}}.
\]
\end{lemma}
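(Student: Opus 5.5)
The plan is a direct Lyapunov argument on $V(y):=y^{-r}$, localized away from $0$ and $\infty$, in which the regularizing drift $\varepsilon^{2}a^{*}$ and the diffusion term both carry the same factor $\varepsilon^{2}$. That shared factor is what should make the bound uniform in $\varepsilon$. Write $Y=Y^{\varepsilon,\vartheta_{0}}$. For fixed $\varepsilon>0$ we have $a_{\varepsilon}=\varepsilon^{2}a^{*}>0$ and $\vartheta_{0}\in(1/2,1)$, so by Definition~\ref{def:CKLS} the process $Y$ is strictly positive and non-explosive. Set $\tau_{m}:=\inf\{t\geq0:Y_{t}\notin(1/m,m)\}$; then $\tau_{m}\uparrow\infty$ a.s.

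Applying It\^{o}'s formula to $Y_{t\wedge\tau_{m}}^{-r}$ gives
\[
  dY_{t}^{-r}=\Big[-r\mu Y_{t}^{-r}+\varepsilon^{2}\Big(-ra^{*}Y_{t}^{-r-1}+\tfrac{r(r+1)\nu^{2}}{2}Y_{t}^{-r-\delta}\Big)\Big]dt+dM_{t},\qquad \delta:=2-2\vartheta_{0}\in(0,1),
\]
where $M$ stopped at $\tau_{m}$ is a true martingale, since its integrand is bounded on $(1/m,m)$. The key pointwise step is Young's inequality. Because $-r-\delta$ lies strictly between $-r-1$ and $-r$, for every $\eta>0$ there is $C_{\eta}=C(\eta,r,\delta)$ with
\[
  y^{-r-\delta}\leq\eta\,y^{-r-1}+C_{\eta}\,y^{-r},\qquad y>0.
\]
Choosing $\eta=2a^{*}/((r+1)\nu^{2})$ makes the singular term $-ra^{*}Y^{-r-1}$ absorb the It\^{o} correction. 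This gives the drift bound $\big(r|\mu|+\varepsilon_{0}^{2}K\big)Y^{-r}$, where $K:=\tfrac{r(r+1)\nu^{2}}{2}C_{\eta}$ depends only on $(r,a^{*},\nu,\vartheta_{0})$. The bound is linear in $V$, and its coefficient is uniform over $0<\varepsilon\leq\varepsilon_{0}$.

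Taking expectations of the stopped process, $f_{m}(t):=\mathbb{E}^{\mathbb{P}}[Y_{t\wedge\tau_{m}}^{-r}]$ satisfies
\[
  f_{m}(t)\leq x_{0}^{-r}+\lambda\int_{0}^{t}f_{m}(s)\,ds,\qquad \lambda:=r|\mu|+\varepsilon_{0}^{2}K.
\]
Here $f_{m}$ is finite since the stopped process is bounded by $m^{r}\vee x_{0}^{-r}$, and $f_{m}$ is nonnegative, so the drift bound can be integrated as stated. Gronwall's lemma then gives $f_{m}(t)\leq x_{0}^{-r}e^{\lambda t}$. Letting $m\to\infty$, Fatou's lemma together with $Y_{t\wedge\tau_{m}}\to Y_{t}$ a.s. yields $\sup_{0<\varepsilon\leq\varepsilon_{0}}\sup_{t\leq T}\mathbb{E}^{\mathbb{P}}[Y_{t}^{-r}]\leq x_{0}^{-r}e^{\lambda T}=:C_{r,T,\varepsilon_{0}}$.

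The main obstacle I expect is the uniformity in $\varepsilon$. A naive bound would let the It\^{o} correction $\varepsilon^{2}\nu^{2}Y^{-r-\delta}$ blow up near $0$ with nothing to control it. The Young splitting has to be chosen with $\eta$ independent of $\varepsilon$, which works precisely because the regularizing drift $\varepsilon^{2}a^{*}$ and the squared volatility $\varepsilon^{2}\nu^{2}$ scale identically (Assumption~\ref{ass:A3}); one should check that no leftover term carries a negative power of $\varepsilon$. The condition $\vartheta_{0}>1/2$, i.e. $\delta<1$, is exactly what makes the interpolation $Y^{-r-\delta}\lesssim Y^{-r-1}+Y^{-r}$ possible.
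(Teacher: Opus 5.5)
Your proposal is correct and follows essentially the same route as the paper: the Lyapunov function $y^{-r}$ localized on $(1/m,m)$, a pointwise bound on the $\varepsilon^{2}$-bracket that uses $\vartheta_{0}>1/2$ so that $-ra^{*}y^{-r-1}$ dominates the It\^{o} correction near $0$, then Gr\"{o}nwall and Fatou. The only difference is cosmetic. The paper bounds the bracket by an additive constant $C_{r}$, via $x=y^{-r-1}$ and $\rho_{r}<1$, which gives $(x_{0}^{-r}+\varepsilon_{0}^{2}C_{r}T)e^{r|\mu|T}$. Your Young interpolation instead absorbs the bracket into a multiplicative term, which gives $x_{0}^{-r}e^{\lambda T}$.
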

\begin{proof}
Let $f_{r}(y)=y^{-r}$ and stop $Y^{\varepsilon,\vartheta_{0}}$ at its exit time $\tau_{m}^{\varepsilon}$ from $(m^{-1},m)$, $m>\max\{x_{0},x_{0}^{-1}\}$. Its generator satisfies
\[
    \mathcal{L}^{\varepsilon}f_{r}(y)=-r\mu y^{-r}+\varepsilon^{2}\big[-ra^{*}y^{-r-1}+\frac{r(r+1)}{2}\nu^{2}y^{-r-2+2\vartheta_{0}}\big].
\]
With $x=y^{-r-1}$ and $\rho_{r}=(r+2-2\vartheta_{0})/(r+1)\in(0,1)$, the bracket equals $-ra^{*}x+\frac{r(r+1)}2\nu^{2}x^{\rho_{r}}\leq C_{r}$ for some $C_{r}>0$. Thus, for $0<\varepsilon\le\varepsilon_{0}$, $\mathcal{L}^{\varepsilon}f_{r}\leq r|\mu|f_{r}+\varepsilon_{0}^{2}C_{r}$. The stopped It\^{o} formula and Gr\"{o}nwall's inequality give
\[
    \mathbb{E}^{\mathbb{P}}\big[f_{r}\big(Y_{t\wedge\tau_{m}^{\varepsilon}}^{\varepsilon,\vartheta_{0}}\big)\big]\leq\big(x_{0}^{-r}+\varepsilon_{0}^{2}C_{r}T\big)e^{r|\mu|T},\qquad 0\leq t\leq T.
\]
For each $\varepsilon>0$, strict positivity and non-explosion imply $\tau_{m}^{\varepsilon}\uparrow\infty~a.s.$. Fatou's lemma yields the claimed result.
\end{proof}

For probability measures $Q$ and $P$ on $\mathcal{F}_{T}$, define the total variation $\|Q-P\|_{\mathrm{TV}}:=\sup_{A\in\mathcal{F}_{T}}|Q(A)-P(A)|$.
\begin{theorem}[Girsanov energy and asymptotic measure closeness]
\label{thm:total-variation-closeness}
Under the conditions of
Lemma~\ref{lem:uniform-inverse-moment-small-noise},
as $\varepsilon\to0$,
\[
    \mathbb{E}^{\mathbb{P}}\Big[\int_{0}^{T}\big|q_{t}^{\varepsilon,\vartheta_{0}}\big|^{2}dt\Big]=O(\varepsilon^{2}),\qquad\|\mathbb{Q}^{\varepsilon}-\mathbb{P}\|_{\mathrm{TV}}=\frac{1}{2}\mathbb{E}^{\mathbb{P}}\big[|\Lambda_{T}^{\varepsilon,\vartheta_{0}}-1|\big]=O(\varepsilon).
\]
Consequently, $\Lambda_{T}^{\varepsilon,\vartheta_{0}}\to1$ in $L^{1}(\mathbb{P})$ and $\log\Lambda_{T}^{\varepsilon,\vartheta_{0}}\to0$ in $\mathbb{P}$-probability.
\end{theorem}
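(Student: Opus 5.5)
The plan is to bound the Girsanov energy directly from the explicit kernel and then pass from energy to total variation through a standard exponential-martingale estimate. Throughout, write $Y=Y^{\varepsilon,\vartheta_{0}}$ and $q=q^{\varepsilon,\vartheta_{0}}$.

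\emph{Energy bound.} From the local-to-CEV specialization,
\[
q_{t}=\varepsilon\Big[\frac{a^{*}}{\nu}Y_{t}^{-\vartheta_{0}}-\frac{\nu}{2}Y_{t}^{\vartheta_{0}-1}\Big].
\]
Using $(u-v)^{2}\le 2u^{2}+2v^{2}$ gives
\[
|q_{t}|^{2}\le 2\varepsilon^{2}\Big[\frac{(a^{*})^{2}}{\nu^{2}}Y_{t}^{-2\vartheta_{0}}+\frac{\nu^{2}}{4}Y_{t}^{-2(1-\vartheta_{0})}\Big].
\]
Both exponents $2\vartheta_{0}$ and $2(1-\vartheta_{0})$ are positive. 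Lemma~\ref{lem:uniform-inverse-moment-small-noise} bounds $\mathbb{E}^{\mathbb{P}}[Y_{t}^{-r}]$ uniformly in $t\in[0,T]$ and $\varepsilon\le\varepsilon_{0}$ for each of them. Tonelli's theorem then gives $\mathbb{E}^{\mathbb{P}}\int_{0}^{T}|q_{t}|^{2}dt\le C\varepsilon^{2}$, which is the first claim.

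\emph{Total variation.} The identity $\|\mathbb{Q}^{\varepsilon}-\mathbb{P}\|_{\mathrm{TV}}=\tfrac12\mathbb{E}^{\mathbb{P}}|\Lambda_{T}-1|$ is the standard density formula. It is available because $\Lambda_{T}$ is a genuine density by Lemma~\ref{lem:ckls-density-martingale}: the supremum over $A$ is attained at $A=\{\Lambda_{T}>1\}$, and $\mathbb{E}^{\mathbb{P}}[\Lambda_{T}-1]=0$. To get the rate, I would go through relative entropy and Pinsker's inequality. Under $\mathbb{Q}^{\varepsilon}$ we have $\log\Lambda_{T}=-\int_{0}^{T} q\,d\widetilde{W}+\tfrac12\int_{0}^{T} q^{2}dt$, so formally
\[
\mathrm{KL}(\mathbb{Q}^{\varepsilon}\|\mathbb{P})=\tfrac12\mathbb{E}^{\mathbb{Q}^{\varepsilon}}\int_{0}^{T} q^{2}dt,
\]
after localizing the stochastic integral. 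This puts the energy under $\mathbb{Q}^{\varepsilon}$ rather than $\mathbb{P}$, so this is the main obstacle.

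I would avoid it by using the reverse divergence $\mathrm{KL}(\mathbb{P}\|\mathbb{Q}^{\varepsilon})=\mathbb{E}^{\mathbb{P}}[-\log\Lambda_{T}]$. Pinsker's inequality applies to either ordering. Here
\[
-\log\Lambda_{T}=\int_{0}^{T} q\,dW+\tfrac12\int_{0}^{T} q^{2}dt.
\]
The stochastic integral is a true $\mathbb{P}$-martingale because $\mathbb{E}^{\mathbb{P}}\int_{0}^{T} q^{2}dt<\infty$ by the first step, so it has mean zero. Hence $\mathrm{KL}(\mathbb{P}\|\mathbb{Q}^{\varepsilon})=\tfrac12\mathbb{E}^{\mathbb{P}}\int_{0}^{T} q^{2}dt=O(\varepsilon^{2})$. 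Pinsker then gives
\[
\|\mathbb{Q}^{\varepsilon}-\mathbb{P}\|_{\mathrm{TV}}\le\sqrt{\tfrac12\mathrm{KL}(\mathbb{P}\|\mathbb{Q}^{\varepsilon})}=O(\varepsilon).
\]
Since the KL identity holds with finite value, no localization subtleties remain.

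\emph{Consequences.} Convergence $\Lambda_{T}\to1$ in $L^{1}(\mathbb{P})$ is immediate from the displayed identity. For $\log\Lambda_{T}\to0$ in probability, write $\log\Lambda_{T}=-\int_{0}^{T} q\,dW-\tfrac12\int_{0}^{T} q^{2}dt$. The first term has $L^{2}(\mathbb{P})$ norm $O(\varepsilon)$ by the Itô isometry, and the second has $L^{1}$ norm $O(\varepsilon^{2})$; Chebyshev and Markov finish. Alternatively, this follows from $L^{1}$ convergence of $\Lambda_{T}$ and the continuity of $\log$ at $1$.
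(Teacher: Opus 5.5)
Your proposal is correct and follows essentially the same route as the paper: bound the energy from the explicit kernel using $|x+y|^{2}\le 2|x|^{2}+2|y|^{2}$ and the uniform inverse-moment lemma (with $r=2\vartheta_{0}$ and $r=2(1-\vartheta_{0})$), then apply Pinsker's inequality to the reverse divergence $\mathbb{E}^{\mathbb{P}}[-\log\Lambda_{T}^{\varepsilon,\vartheta_{0}}]=\frac12\mathbb{E}^{\mathbb{P}}\int_{0}^{T}|q_{t}^{\varepsilon,\vartheta_{0}}|^{2}dt$, and conclude with the density formula for total variation. Your direct $L^{2}$/$L^{1}$ bound on $\log\Lambda_{T}^{\varepsilon,\vartheta_{0}}$ differs only slightly from the paper's appeal to continuity of the logarithm at $1$, and it even yields $L^{1}$ convergence rather than just convergence in probability.
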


\begin{proof}
With the inequality $|x+y|^{2}\leq 2|x|^{2}+2|y|^{2}$, the kernel representation and Lemma~\ref{lem:uniform-inverse-moment-small-noise} give
\[
    \mathbb{E}^{\mathbb{P}}\Big[\int_{0}^{T}\big|q_{t}^{\varepsilon,\vartheta_{0}}\big|^{2}dt\Big]\leq C\varepsilon^{2}\int_{0}^{T}\mathbb{E}^{\mathbb{P}}\Big[\big(Y_{t}^{\varepsilon,\vartheta_{0}}\big)^{-2\vartheta_{0}}+\big(Y_{t}^{\varepsilon,\vartheta_{0}}\big)^{-2(1-\vartheta_{0})}\Big]dt\leq C_{T}\varepsilon^{2}.
\]
Thus, $\int_{0}^{T}q_{t}^{\varepsilon,\vartheta_{0}}dW_{t}$ is square-integrable with mean $0$. Let $H(\mathbb{P}\mid\mathbb{Q}^{\varepsilon}):=\mathbb{E}^{\mathbb{P}}[-\log\Lambda_{T}^{\varepsilon,\vartheta_{0}}]$ denote the relative entropy (Kullback–Leibler divergence). The density formula in \eqref{girsanov-kernel-general} and Pinsker's inequality yield
\[
    \|\mathbb{Q}^{\varepsilon}-\mathbb{P}\|_{\mathrm{TV}}^{2}\leq\frac{1}{2} H(\mathbb{P}\mid\mathbb{Q}^{\varepsilon})=\frac{1}{4}\mathbb{E}^{\mathbb{P}}\Big[\int_{0}^{T}\big|q_{t}^{\varepsilon,\vartheta_{0}}\big|^{2}dt\Big]=O(\varepsilon^{2}).
\]
The stated $L^{1}$ bound follows from $\mathbb{E}^{\mathbb{P}}[|\Lambda_{T}^{\varepsilon,\vartheta_{0}}-1|]=2\|\mathbb{Q}^{\varepsilon}-\mathbb{P}\|_{\mathrm{TV}}$. Finally, convergence of $\log\Lambda_{T}^{\varepsilon,\vartheta_{0}}$ follows from convergence of $\Lambda_{T}^{\varepsilon,\vartheta_{0}}$ in probability and continuity of the logarithm at $1$.
\end{proof}

\begin{remark}[Role of the Local-to-CEV scaling]\label{rem:why-Local-to-CEV-scaling}
The additional drift $a_{\varepsilon}=\varepsilon^{2}a^{*}$ regularizes the $0$ boundary by keeping the auxiliary CKLS process strictly positive for each $\varepsilon>0$, while preserving the first-order small-noise behavior of the target CEV process; see \eqref{process-level-estiamte}. The common factor $\varepsilon$ in the Girsanov kernel and the uniform inverse-moment bound yield the $O(\varepsilon^{2})$ energy estimate in Theorem~\ref{thm:total-variation-closeness}. The same scaling supports the uniform inverse-moment estimate: in the generator applied to $y^{-r}$, the negative regularization term and the positive It\^{o} correction both carry a factor $\varepsilon^{2}$, while the former is more singular near $0$ because $\vartheta_{0}>1/2$. 

More generally, $a_{\varepsilon}=\varepsilon^{p}a^{*}$ with $p>1$ lies below the first-order small-noise scale and yields Girsanov kernel contributions of orders $\varepsilon^{p-1}$ and $\varepsilon$ on compact subsets of $(0,\infty)$. We choose $p=2$ to match these orders. By contrast, $a_{\varepsilon}=\varepsilon a^{*}$ produces an order-one contribution to the kernel on compact subsets of $(0,\infty)$ and enters the first-order fluctuation dynamics. Vanishing energy, and hence total-variation closeness, is therefore no longer guaranteed by the present argument. A fixed $a>0$ would change the deterministic small-noise limit, so the auxiliary-to-CEV approximation and common deterministic limit in Lemmas~\ref{lem:local-to-CEV-process-approx} and~\ref{lem:uniform-deterministic-limit} would no longer hold. It would also contribute a term of order $\varepsilon^{-1}$ to the kernel on such compact sets.
\end{remark}

\begin{theorem}[Transfer of convergence]\label{thm:transfer-convergence}
Under the setting of Theorem~\ref{thm:total-variation-closeness}, let
$(\mathsf{E},d_{\mathsf{E}})$ be a Polish space. For any $\mathsf{E}$-valued, $\mathcal{F}_{T}$-measurable random elements $U_{\varepsilon,n}$ and $U$,
\[
	U_{\varepsilon,n}\xrightarrow{\mathbb{Q}^{\varepsilon}\text{-prob.}}U\quad\Longleftrightarrow\quad U_{\varepsilon,n}\xrightarrow{\mathbb{P}\text{-prob.}}U.
\]
Moreover, weak convergence under $\mathbb{Q}^{\varepsilon}$ is equivalent to weak convergence to the same limiting law under $\mathbb{P}$.
\end{theorem}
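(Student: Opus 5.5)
The whole argument rests on the single quantitative fact from Theorem~\ref{thm:total-variation-closeness}: $\delta_\varepsilon:=\|\mathbb{Q}^{\varepsilon}-\mathbb{P}\|_{\mathrm{TV}}=O(\varepsilon)\to0$. Both parts will follow from the elementary inequality $|\mathbb{Q}^{\varepsilon}(A)-\mathbb{P}(A)|\le\delta_\varepsilon$, valid for every $A\in\mathcal{F}_T$.

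\emph{Convergence in probability.} Fix $\eta>0$ and set $A_{\varepsilon,n}:=\{d_{\mathsf{E}}(U_{\varepsilon,n},U)>\eta\}$. Since $\mathsf{E}$ is Polish, hence separable, $d_{\mathsf{E}}(U_{\varepsilon,n},U)$ is a random variable. Because $U_{\varepsilon,n}$ and $U$ are $\mathcal{F}_T$-measurable, we have $A_{\varepsilon,n}\in\mathcal{F}_T$. The total-variation bound then gives
\[
|\mathbb{Q}^{\varepsilon}(A_{\varepsilon,n})-\mathbb{P}(A_{\varepsilon,n})|\le\delta_\varepsilon .
\]
Along the joint asymptotic regime of Assumption~\ref{ass:A1}, $\delta_\varepsilon\to0$. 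Therefore $\mathbb{Q}^{\varepsilon}(A_{\varepsilon,n})\to0$ if and only if $\mathbb{P}(A_{\varepsilon,n})\to0$, and since $\eta>0$ is arbitrary, this is the claimed equivalence. The argument is symmetric in the two measures.

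\emph{Weak convergence.} Suppose $\mathcal{L}_{\mathbb{Q}^{\varepsilon}}(U_{\varepsilon,n})\Rightarrow\pi$ for some law $\pi$. For any bounded continuous $f$ on $\mathsf{E}$, the total-variation bound for integrals gives
\[
|\mathbb{E}^{\mathbb{Q}^{\varepsilon}}f(U_{\varepsilon,n})-\mathbb{E}^{\mathbb{P}}f(U_{\varepsilon,n})|\le 2\|f\|_\infty\delta_\varepsilon\to0 .
\]
Alternatively, one can write this difference as $\mathbb{E}^{\mathbb{P}}[(\Lambda_T^{\varepsilon,\vartheta_0}-1)f(U_{\varepsilon,n})]$ and use $\Lambda_T^{\varepsilon,\vartheta_0}\to1$ in $L^1(\mathbb{P})$. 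Either way, $\mathbb{E}^{\mathbb{P}}f(U_{\varepsilon,n})\to\int f\,d\pi$, and the converse direction is identical. Polishness is used only for measurability here. It would also allow one to invoke Portmanteau or tightness arguments if convergence-determining classes were needed.

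\emph{Main obstacle.} The only delicate point is that the measure $\mathbb{Q}^{\varepsilon}$ itself changes along the sequence, so there is no fixed reference measure to work with. The index $(\varepsilon,n)$ must be understood as a joint sequence along which $\varepsilon\to0$. The uniform bound $\delta_\varepsilon\to0$, which is independent of the event and the test function, is exactly what handles this. A secondary point is that the limit $U$, for example $\xi_{\beta_0}$, must be $\mathcal{F}_T$-measurable and defined on the common space. This holds because it is a Wiener integral of $W$ up to time $T$.
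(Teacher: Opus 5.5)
Your proposal is correct and follows the paper's proof essentially verbatim: you bound $|\mathbb{Q}^{\varepsilon}(A_{\varepsilon,n}(\eta))-\mathbb{P}(A_{\varepsilon,n}(\eta))|$ by $\|\mathbb{Q}^{\varepsilon}-\mathbb{P}\|_{\mathrm{TV}}\to0$ from Theorem~\ref{thm:total-variation-closeness}, and you handle weak convergence with the same bound applied to expectations of bounded continuous functions. Your remarks on the measurability of $d_{\mathsf{E}}(U_{\varepsilon,n},U)$ via separability and the explicit constant $2\|f\|_\infty$ only fill in details the paper leaves implicit.
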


\begin{proof}
For every $\eta>0$, define $A_{\varepsilon,n}(\eta):=\{d_{\mathsf{E}}(U_{\varepsilon,n},U)>\eta\}\in\mathcal{F}_{T}$, then
\[
	\big|\mathbb{Q}^{\varepsilon}(A_{\varepsilon,n}(\eta))-\mathbb{P}(A_{\varepsilon,n}(\eta))\big|\leq \|\mathbb{Q}^{\varepsilon}-\mathbb{P}\|_{\mathrm{TV}}\to0,\qquad\text{as}\quad\varepsilon\to0,
\]
which proves the probability statement. The weak-convergence statement follows similarly from the total-variation bound for expectations of bounded continuous functions.
\end{proof}

\section{Supporting Results for the Main Results on the Two-Step Estimator}\label{bbb}
This appendix develops the technical arguments supporting the main result for the two-step estimator $\widehat{\vartheta}_{\varepsilon,n}$. Subsection~\ref{subsec:B1} establishes the process-level local-to-CEV approximation and the common deterministic small-noise limit, providing the path-comparison and localization estimates needed below. Following the structure of Remark~\ref{rem:estimator-hierarchy}, the subsequent analysis connects the auxiliary oracle, the CEV-based oracle, and the feasible estimator. Specifically, Subsection~\ref{subsec:B2} derives the consistency and first-order limit of $\widehat{\beta}_{\varepsilon,n}^{Y,\circ}$ under $\mathbb{Q}^{\varepsilon}$ and transfers these results to $\mathbb{P}$ using Theorem~\ref{thm:transfer-convergence}. It also shows that replacing the auxiliary path by the target CEV path changes the oracle estimator by $o_{\mathbb{P}}(\varepsilon)$, yielding the same first-order limit for $\widehat{\beta}_{\varepsilon,n}^{X,\circ}$. Subsection~~\ref{subsec:B3} establishes the convergence rate of the preliminary estimator $\widetilde{\vartheta}_{\varepsilon,n}$. Finally, Subsection~~\ref{subsec:B4} combines this rate with a uniform parameter-sensitivity bound to show that replacing $\vartheta_{0}$ by $\widetilde{\vartheta}_{\varepsilon,n}$ in the CEV-based state mapping produces an $o_{\mathbb{P}}(\varepsilon)$ plug-in error. Together, these results justify the passage from the auxiliary oracle limit to the feasible estimator under the original CEV observation model.

\subsection{Local-to-CEV process approximation and deterministic limit}\label{subsec:B1}
We begin with two process-level results: a localized comparison of the auxiliary CKLS and target CEV processes, and their uniform convergence to a common deterministic limit.

The deterministic small-noise limit associated with \eqref{small-noise-CEV-def} solves $d\mathcal{X}_{t}=\mu\mathcal{X}_{t}dt$ with $\mathcal{X}_{0}=x_{0}$, and hence is given by $\mathcal{X}_{t}=x_{0}e^{\mu t}$. Since $\mu<0$,
\begin{equation}\label{deterministic-limit-bounds}
    0<x_{0}e^{\mu T}\leq\mathcal{X}_{t}\leq x_{0},\qquad 0\leq t\leq T.
\end{equation}
The following lemmas establish
\[
    \sup_{0\leq t\leq T}|X_t^{\varepsilon,\vartheta_{0}}-\mathcal{X}_t|=O_{\mathbb{P}}(\varepsilon),\qquad\sup_{0\leq t\leq T} |Y_{t}^{\varepsilon,\vartheta_{0}}-\mathcal{X}_t|=O_{\mathbb{P}}(\varepsilon).
\]
The deterministic bounds and uniform convergence ensure that both processes remain in a common compact subset of $(0,\infty)$ over $[0,T]$ with probability tending to $1$. Together with the localized path-comparison estimate, this yields
\[
    \sup_{0\leq t\leq T}\big|Y_{t}^{\varepsilon,\vartheta_{0}}-X_{t}^{\varepsilon,\vartheta_{0}}\big|=O_{\mathbb{P}}(\varepsilon^2)=o_{\mathbb{P}}(\varepsilon),
\]
formalizing the local-to-CEV interpretation.

\begin{lemma}[Process-level local-to-CEV approximation]\label{lem:local-to-CEV-process-approx}
Under Assumptions~\ref{ass:A2} and~\ref{ass:A3}, fix $T\in(0,\infty)$. For any $\vartheta\in\Theta$, suppose that $X^{\varepsilon,\vartheta}$ and $Y^{\varepsilon,\vartheta}$ are defined on the same filtered probability space, driven by the same Brownian motion $W$, and solve \eqref{small-noise-CEV-def} and \eqref{Local-to-CEV-CKLS}, respectively. For fixed $0<\delta<M<\infty$, define the compact localization event
\[
    \Omega_{\delta,M}^{\varepsilon,\vartheta}:=\Big\{\delta\leq\inf_{0\leq t\leq T}\min\{Y_{t}^{\varepsilon,\vartheta},X_{t}^{\varepsilon,\vartheta}\}\leq\sup_{0\leq t\leq T}\max\{Y_{t}^{\varepsilon,\vartheta},X_{t}^{\varepsilon,\vartheta}\}\leq M \Big\}.
\]
Then there exists a constant $C_{\delta,M,T}>0$, independent of $\varepsilon$, such that
\[
	\mathbb{E}\big[\sup_{0\leq t\leq T}\big|Y_{t}^{\varepsilon,\vartheta}-X_{t}^{\varepsilon,\vartheta}\big|^{2}\mathds{1}_{\Omega_{\delta,M}^{\varepsilon,\vartheta}}\big]\leq C_{\delta,M,T}\varepsilon^{4},\quad\text{and consequently, }\sup_{0\leq t\leq T}\big|Y_{t}^{\varepsilon,\vartheta}-X_{t}^{\varepsilon,\vartheta}\big|\mathds{1}_{\Omega_{\delta,M}^{\varepsilon,\vartheta}}=O_{\mathbb{P}}(\varepsilon^{2}).
\]
\end{lemma}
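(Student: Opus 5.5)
The plan is to study the difference process $D_t:=Y_{t}^{\varepsilon,\vartheta}-X_{t}^{\varepsilon,\vartheta}$ up to a stopping time that enforces the compact localization, and then close a Gr\"onwall inequality in which the only forcing term is the regularization drift $\varepsilon^{2}a^{*}$. Since both processes share the initial value $x_{0}$ and the Brownian motion $W$, subtracting \eqref{small-noise-CEV-def} from \eqref{Local-to-CEV-CKLS} gives
\[
D_t=\int_0^t\big(\varepsilon^{2}a^{*}+\mu D_s\big)ds+\varepsilon\nu\int_0^t\Big[\big(Y_{s}^{\varepsilon,\vartheta}\big)^{\vartheta}-\big(X_{s}^{\varepsilon,\vartheta}\big)^{\vartheta}\Big]dW_s .
\]

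\emph{Step 1: replace the indicator by a stopping time.} The event $\Omega_{\delta,M}^{\varepsilon,\vartheta}$ depends on the whole path, so $\mathds{1}_{\Omega_{\delta,M}^{\varepsilon,\vartheta}}$ is not adapted and cannot be moved inside a stochastic integral. I will define
\[
\sigma:=\inf\{t\geq0:\min(Y_t^{\varepsilon,\vartheta},X_t^{\varepsilon,\vartheta})<\delta\ \text{or}\ \max(Y_t^{\varepsilon,\vartheta},X_t^{\varepsilon,\vartheta})>M\}\wedge T,
\]
which is a stopping time because both paths are continuous. If $x_0\notin[\delta,M]$, then $\Omega_{\delta,M}^{\varepsilon,\vartheta}=\emptyset$ and there is nothing to prove, so assume $x_0\in[\delta,M]$. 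On $\Omega_{\delta,M}^{\varepsilon,\vartheta}$ we have $\sigma=T$, hence
\[
\sup_{t\leq T}|D_t|^{2}\mathds{1}_{\Omega_{\delta,M}^{\varepsilon,\vartheta}}\leq\sup_{t\leq T}|D_{t\wedge\sigma}|^{2}.
\]
It therefore suffices to bound $g(t):=\mathbb{E}\big[\sup_{s\leq t}|D_{s\wedge\sigma}|^{2}\big]$ by a multiple of $\varepsilon^{4}$. This function is finite because both stopped processes lie in $[0,M]$.

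\emph{Step 2: Lipschitz control and Gr\"onwall.} For $s<\sigma$ both processes lie in $[\delta,M]$. In particular $X$ has not been absorbed, and the mean value theorem gives
\[
\big|(Y_s^{\varepsilon,\vartheta})^{\vartheta}-(X_s^{\varepsilon,\vartheta})^{\vartheta}\big|\leq L_\delta|D_s|,\qquad L_\delta:=\overline{\vartheta}\,\delta^{\underline{\vartheta}-1}\vee\overline{\vartheta}\,\delta^{\overline{\vartheta}-1}.
\]
The constant $L_\delta$ is uniform in $\vartheta\in\Theta$. Starting from the stopped equation, I will use $(x+y+z)^2\leq3(x^2+y^2+z^2)$ and Cauchy--Schwarz on the $ds$ term, and Doob's $L^{2}$ (or Burkholder--Davis--Gundy) inequality on the stopped martingale. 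This yields
\[
g(t)\leq3\varepsilon^{4}(a^{*})^{2}T^{2}+3\big(\mu^{2}T+4\varepsilon^{2}\nu^{2}L_\delta^{2}\big)\int_0^t g(s)\,ds .
\]
Since $\varepsilon\leq1$, the coefficient of the integral is bounded by a constant independent of $\varepsilon$. Gr\"onwall's lemma then gives $g(T)\leq3(a^{*})^{2}T^{2}e^{3(\mu^{2}T+4\nu^{2}L_\delta^{2})T}\varepsilon^{4}=:C_{\delta,M,T}\,\varepsilon^{4}$, which is the first claim.

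\emph{Step 3: the $O_{\mathbb{P}}(\varepsilon^{2})$ statement.} This follows from Chebyshev's inequality:
\[
\mathbb{P}\Big(\sup_{t\leq T}|D_t|\mathds{1}_{\Omega_{\delta,M}^{\varepsilon,\vartheta}}>K\varepsilon^{2}\Big)\leq\frac{C_{\delta,M,T}}{K^{2}},
\]
and the right-hand side can be made arbitrarily small by taking $K$ large.

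The main obstacle is Step 1: the path-dependent indicator must be traded for the stopping time $\sigma$. This substitution is also what neutralizes the non-Lipschitz behaviour of $x^{\vartheta}$ near $0$ and the absorption of $X$ at $0$. Once it is made, the rest is a standard Gr\"onwall argument.
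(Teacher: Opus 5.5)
Your proposal is correct and follows essentially the same route as the paper. Both arguments replace the path-dependent indicator of $\Omega_{\delta,M}^{\varepsilon,\vartheta}$ by a stopping time that is at least $T$ on that event. Both then use a Lipschitz bound for $x\mapsto x^{\vartheta}$ on the compact box, uniform in $\vartheta\in\Theta$, and close a Gr\"onwall inequality via Cauchy--Schwarz and BDG/Doob, with $O(\varepsilon^{4})$ forcing from the drift $\varepsilon^{2}a^{*}$. Both finish with Markov's inequality. The only cosmetic difference is that the paper stops at the exit from the slightly larger box $(\delta/2,2M)^{2}$ rather than $[\delta,M]^{2}$, which changes nothing.
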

\begin{proof}
Set $D_{t}:=Y_{t}^{\varepsilon,\vartheta}-X_{t}^{\varepsilon,\vartheta}$ and
$\tau:=\inf\{t\geq0:(X_{t}^{\varepsilon,\vartheta},Y_{t}^{\varepsilon,\vartheta})
\notin(\delta/2,2M)^{2}\}$. On $[\delta/2,2M]$, the map $x\mapsto x^\vartheta$ is Lipschitz uniformly in $\vartheta\in\Theta$. Subtracting the two SDEs, stopping at $\tau$, and applying the Cauchy--Schwarz and BDG inequalities therefore give,
for $0\leq t\leq T$ and $0<\varepsilon\leq1$,
\[
    \mathbb{E}\Big[\sup_{0\leq r\leq t}|D_{r\wedge\tau}|^{2}\Big]\leq C\varepsilon^{4}+C\int_{0}^{t}\mathbb{E}\Big[\sup_{0\leq r\leq s}|D_{r\wedge\tau}|^{2}\Big]ds,
\]
where $C$ is independent of $\varepsilon$ and $\vartheta$. Since $\tau>T$ on $\Omega_{\delta,M}^{\varepsilon,\vartheta}$, Gr\"{o}nwall's inequality yields
\begin{equation}\label{L2-estimate-D}
    \mathbb{E}\Big[\sup_{0\leq t\leq T}|D_{t}|^{2}\mathds{1}_{\Omega_{\delta,M}^{\varepsilon,\vartheta}}\Big]\leq\mathbb{E}\Big[\sup_{0\leq t\leq T}|D_{t\wedge\tau}|^{2}\Big]\leq C_{\delta,M,T}\varepsilon^{4}.
\end{equation}
The stated $O_{\mathbb{P}}(\varepsilon^{2})$ bound follows from Markov's inequality.
\end{proof}

\begin{lemma}[Uniform deterministic small-noise limit]\label{lem:uniform-deterministic-limit}
Under Assumptions~\ref{ass:A1},~\ref{ass:A2},~\ref{ass:A3}, for any $\vartheta\in\Theta$,
\[
	\sup_{0\leq t\leq T}\big|X_{t}^{\varepsilon,\vartheta}-\mathcal{X}_{t}\big|=O_{\mathbb{P}}(\varepsilon),\qquad
	\sup_{0\leq t\leq T}\big|Y_{t}^{\varepsilon,\vartheta}-\mathcal{X}_{t}\big|=O_{\mathbb{P}}(\varepsilon).
\]
\end{lemma}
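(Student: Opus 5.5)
We prove the two bounds separately: a Gronwall argument with the Lipschitz structure of the drift, plus a localization, for the CEV process; then a comparison with the CEV process for the auxiliary CKLS process.

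\textbf{CEV part.} Write $R_t:=X_t^{\varepsilon,\vartheta}-\mathcal{X}_t$. Since $\mathcal{X}$ solves $d\mathcal{X}_t=\mu\mathcal{X}_tdt$,
\[
R_t=\mu\int_0^tR_sds+\varepsilon\nu\int_0^t(X_s^{\varepsilon,\vartheta})^{\vartheta}dW_s .
\]
The drift is linear, so Gronwall's inequality gives, pathwise,
\[
\sup_{t\le T}|R_t|\le e^{|\mu|T}\varepsilon\nu\sup_{t\le T}\Big|\int_0^t(X_s^{\varepsilon,\vartheta})^{\vartheta}dW_s\Big|.
\]
It then suffices to show that the stochastic integral is $O_{\mathbb{P}}(1)$ uniformly in $\varepsilon\in(0,1]$. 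By BDG,
\[
\mathbb{E}\Big[\sup_{t\le T}\Big|\int_0^t X_s^{\vartheta}dW_s\Big|^2\Big]\le C\int_0^T\mathbb{E}[X_s^{2\vartheta}]ds .
\]
Since $X\ge0$ and $2\vartheta<2$, we have $x^{2\vartheta}\le1+x^2$. The second moment is uniformly bounded: It\^o's formula for $X^2$ gives $d\mathbb{E}[X_t^2]\le(2\mu\mathbb{E}[X_t^2]+\varepsilon^2\nu^2(1+\mathbb{E}[X_t^2]))dt$. This is justified by stopping at exit from $[0,m]$ and applying Fatou's lemma; the absorbed continuation at $0$ causes no difficulty because both coefficients vanish there. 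Gronwall then bounds $\sup_{t\le T}\mathbb{E}[X_t^2]$ uniformly in $\varepsilon\le1$. Markov's inequality yields $\sup_t|R_t|=O_{\mathbb{P}}(\varepsilon)$.

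\textbf{CKLS part.} The same scheme applies to $Y$. The extra drift term $\varepsilon^2a^*t$ contributes $O(\varepsilon^2)$ to the Gronwall bound. A uniform second-moment bound for $Y$ follows in the same way, the generator term $2\varepsilon^2a^*y$ being absorbed by $1+y^2$. This gives $\sup_t|Y_t^{\varepsilon,\vartheta}-\mathcal{X}_t|=O_{\mathbb{P}}(\varepsilon)$ directly, with no localization needed.

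Alternatively, one could combine the CEV result with Lemma~\ref{lem:local-to-CEV-process-approx}. Take $\delta=x_0e^{\mu T}/2$ and $M=2x_0$, using \eqref{deterministic-limit-bounds}. The direct Gronwall route avoids the circularity of needing $\mathbb{P}(\Omega_{\delta,M}^{\varepsilon,\vartheta})\to1$, which itself relies on this lemma, so I would use it.

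\textbf{Main obstacle.} The only delicate point is justifying It\^o and the moment bounds for the absorbed CEV process, whose diffusion coefficient is non-Lipschitz at $0$, uniformly in $\varepsilon$. I would handle it by stopping at $\inf\{t:X_t\ge m\}$; no lower localization is needed for $x^2$, since $x\mapsto x^2$ is smooth on $[0,\infty)$. I would then let $m\to\infty$, using non-explosion from Definition~\ref{def:CEV}. All constants depend only on $T,\mu,\nu,x_0,a^*$ and not on $\vartheta\in\Theta$, which gives the claimed bounds for any $\vartheta\in\Theta$.
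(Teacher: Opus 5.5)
Your proposal is correct and follows essentially the paper's route. Both arguments first obtain uniform second-moment bounds for $X^{\varepsilon,\vartheta}$ and $Y^{\varepsilon,\vartheta}$ from It\^{o}'s formula stopped at upward level crossings, using $x^{2\vartheta}\leq 1+x^{2}$, $2a^{*}y\leq (a^{*})^{2}+y^{2}$, Gr\"{o}nwall and Fatou, and then apply BDG to the driving stochastic integral and Markov's inequality; the only inessential difference is that the paper represents the deviation via the variation-of-constants formula, whereas you bound it with a pathwise Gr\"{o}nwall argument on the integral equation, which gives the same $O_{\mathbb{P}}(\varepsilon)$ estimate.
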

\begin{proof}
\textbf{Step~1 (Uniform moment bounds)}: For $m>x_{0}$, let $\tau_{m}:=\inf\{t\geq0:\max(X_{t}^{\varepsilon,\vartheta},Y_{t}^{\varepsilon,\vartheta})\geq m\}$. The stopped It\^{o} formulas have bounded stochastic integrands and hence mean-zero martingale terms. Using $x^{2\vartheta}\leq1+x^{2}$ and $2a^{*}x\leq(a^{*})^{2}+x^{2}$, we obtain, for $0<\varepsilon\leq1$ and $t\leq T$,
\[
    \mathbb{E}^{\mathbb{P}}\big[\big(X_{t\wedge\tau_{m}}^{\varepsilon,\vartheta}\big)^{2}+(Y_{t\wedge\tau_{m}}^{\varepsilon,\vartheta}\big)^{2}\big]\leq 2x_{0}^{2}+C\int_{0}^{t}\Big[1+\mathbb{E}^{\mathbb{P}}\big[\big(X_{s\wedge\tau_{m}}^{\varepsilon,\vartheta}\big)^{2}+\big(Y_{s\wedge\tau_{m}}^{\varepsilon,\vartheta}\big)^{2}\big]\Big]ds,
\]
where $C$ is independent of $m$, $\varepsilon$, $\vartheta$. Gr\"{o}nwall's inequality and Fatou's lemma, using nonexplosion to let $m\to\infty$, yield
\begin{equation}\label{X-Y-uniform-second-moment}
    \sup_{0<\varepsilon\leq1}\sup_{\vartheta\in\Theta}\sup_{0\leq t\leq T}\mathbb{E}^{\mathbb{P}}\big[\big(X_{t}^{\varepsilon,\vartheta}\big)^{2}+\big(Y_{t}^{\varepsilon,\vartheta}\big)^{2}\big]\leq C_{T}<\infty.
\end{equation}

\textbf{Step~2 (Approximation by the deterministic path)}: Variation of constants gives
\[
\begin{aligned}
    X_{t}^{\varepsilon,\vartheta}-\mathcal{X}_{t}=\varepsilon\nu e^{\mu t}\int_{0}^{t}e^{-\mu s}\big(X_{s}^{\varepsilon,\vartheta}\big)^{\vartheta}dW_{s},\qquad
    Y_{t}^{\varepsilon,\vartheta}-\mathcal{X}_{t}=\varepsilon^{2}a^{*}e^{\mu t}\int_{0}^{t}e^{-\mu s}ds+\varepsilon\nu e^{\mu t}\int_{0}^{t}e^{-\mu s}\big(Y_{s}^{\varepsilon,\vartheta}\big)^{\vartheta}dW_{s}.
\end{aligned}
\]
Since $e^{\pm\mu t}$ are bounded on $[0,T]$, the BDG inequality, $x^{2\vartheta}\leq1+x^{2}$, and \eqref{X-Y-uniform-second-moment} imply
\[
    \mathbb{E}^{\mathbb{P}}\Big[\sup_{0\leq t\leq T}\big(\big|X_{t}^{\varepsilon,\vartheta}-\mathcal{X}_{t}\big|^{2}+\big|Y_{t}^{\varepsilon,\vartheta}-\mathcal{X}_{t}\big|^{2}\big)\Big]\leq C_{T}\varepsilon^{4}+C_{T}\varepsilon^{2}\int_{0}^{T}\Big[1+\mathbb{E}^{\mathbb{P}}\big[\big(X_{s}^{\varepsilon,\vartheta}\big)^{2}+\big(Y_{s}^{\varepsilon,\vartheta}\big)^{2}\big]\Big]ds\leq C_{T}\varepsilon^{2},
\]
uniformly in $\vartheta\in\Theta$. Both assertions follow from Markov's inequality.
\end{proof}

\subsection{Oracle LSE, asymptotics, and auxiliary-to-CEV equivalence}\label{subsec:B2}
We first analyze the auxiliary oracle estimator $\widehat{\beta}_{\varepsilon,n}^{Y,\circ}$ defined in Subsection~\ref{subsec:feasible-oracles}. Recall that, under $\mathbb{Q}^{\varepsilon}$, the transformed auxiliary process $Z_{t}^{Y,\varepsilon,\circ}:=\mathcal{T}(Y_{t}^{\varepsilon,\vartheta_{0}};\vartheta_{0})$ satisfies the pseudo-CIR dynamics
\begin{equation}\label{auxiliary-oracle-cir-dynamics}
    dZ_{t}^{Y,\varepsilon,\circ}=\Big(\frac{\varepsilon^{2}\nu^{2}\zeta^{2}}{2}-\beta_{0}Z_{t}^{Y,\varepsilon,\circ}\Big)dt+\varepsilon\nu\zeta\sqrt{Z_{t}^{Y,\varepsilon,\circ}}d\widetilde{W}_{t}^{\varepsilon},\qquad Z_{0}^{Y,\varepsilon,\circ}=\frac{\zeta^{2}}{4(1-\vartheta_{0})^{2}}x_{0}^{2(1-\vartheta_{0})}.
\end{equation}

\begin{remark}[Relation to S{\o}rensen and Uchida's framework]
The oracle discretization argument requires the additional relation $\Delta_{n}=o(\varepsilon)$, corresponding to the small-noise drift-estimation condition of \cite{sorensen2003small} when the diffusion coefficient is known. This condition is weaker than Assumption~\ref{ass:A5}, which implies $\Delta_{n}=o(\varepsilon^{3})$. The framework of \cite{sorensen2003small} imposes smoothness and uniform nondegeneracy conditions on the diffusion coefficients.
\end{remark}
For the auxiliary CIR dynamics, the Feller condition $2\cdot\varepsilon^{2}\nu^{2}\zeta^{2}/2=\varepsilon^{2}\nu^{2}\zeta^{2}$ ensures strict positivity of $Z_{t}^{Y,\varepsilon,\circ}$, providing a strictly positive CIR benchmark for localized asymptotic analysis. This allows the establishment of the oracle asymptotics without directly confronting the boundary degeneracy associated with the target CEV process hitting and being absorbed at $0$. Meanwhile, the square-root diffusion coefficient has derivatives that are singular at $0$ and its square is not bounded away from $0$ on $(0,\infty)$. We therefore verify the required conditions on a compact neighborhood of the deterministic limiting path contained in $(0,\infty)$, and show below that the auxiliary process remains in this neighborhood with probability tending to $1$.

Using the state mapping \eqref{state-mapping-general-form} for $x>0$, define the transformed deterministic path
\[
    \mathcal{Z}_{t}:=\mathcal{T}(\mathcal{X}_{t};\vartheta_{0})=\frac{\zeta^{2}}{4(1-\vartheta_{0})^{2}}
\mathcal{X}_{t}^{2(1-\vartheta_{0})}.
\]
Notably, $\mathcal{Z}$ is the image of the deterministic small-noise limit $\mathcal{X}$ under the state mapping. This definition does not assert that the transformed absorbing CEV process satisfies the CIR dynamics globally after hitting $0$.  Equivalently, it coincides with the deterministic limit of the corresponding pseudo-CIR dynamics. Since $\mathcal{X}_{t}=x_{0}e^{\mu t}$ and $\beta_{0}=-2\mu(1-\vartheta_{0})$, we have $\mathcal{Z}_{t}=\mathcal{Z}_{0}e^{-\beta_{0}t}$. Hence, by \eqref{deterministic-limit-bounds}, $0<\inf_{0\leq t\leq T}\mathcal{Z}_{t}\leq\sup_{0\leq t\leq T}\mathcal{Z}_{t}<\infty$. Together with Lemma~\ref{lem:uniform-deterministic-limit} and Theorem~\ref{thm:transfer-convergence}, this ensures that the transformed auxiliary process remains, with probability tending to $1$, in a neighborhood of $\mathcal{Z}$ bounded away from $0$. This localization provides the regularity needed for the
oracle analysis in the spirit of \cite{sorensen2003small}.
Theorem~\ref{thm:auxiliary-oracle-lse} below establishes the
oracle limit directly for the present $\varepsilon$-dependent
auxiliary dynamics.

Before developing the oracle asymptotics of Theorem~\ref{thm:auxiliary-oracle-lse}, we first establish the following lemma.
\begin{lemma}[Moment and increment bounds for $Z_{u}^{Y,\varepsilon,\circ}$]\label{lem:auxiliary-pseudo-cir-bounds}
Let $Z^{Y,\varepsilon,\circ}$ be the auxiliary pseudo-CIR process satisfying
\[
    \text{\eqref{auxiliary-oracle-cir-dynamics}. Then}\sup_{0<\varepsilon\le\varepsilon_{0}}\sup_{0\leq u\leq T}\mathbb{E}^{\mathbb{Q}^{\varepsilon}}\big[Z_{u}^{Y,\varepsilon,\circ}+\big(Z_{u}^{Y,\varepsilon,\circ}\big)^{2}+\big(Z_{u}^{Y,\varepsilon,\circ}\big)^{3}\big]<\infty.\text{ Moreover, there exists some $C>0$ such that}
\]
\begin{equation}\label{auxiliary-oracle-increment-estimate-general}
    \mathbb{E}^{\mathbb{Q}^{\varepsilon}}\big[|Z_{u}^{Y,\varepsilon,\circ}-Z_{s}^{Y,\varepsilon,\circ}|^{2}\big]\leq C\big[|u-s|^{2}+\varepsilon^{2}|u-s|\big],\qquad 0\leq s\leq u\leq T.
\end{equation}
\end{lemma}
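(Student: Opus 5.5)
We work directly with the CIR-type dynamics \eqref{auxiliary-oracle-cir-dynamics} under $\mathbb{Q}^{\varepsilon}$, where $\widetilde{W}^{\varepsilon}$ is a standard Brownian motion by Lemma~\ref{lem:ckls-density-martingale}. The process $Z^{Y,\varepsilon,\circ}$ is strictly positive because $Z^{Y,\varepsilon,\circ}=\mathcal{T}(Y^{\varepsilon,\vartheta_{0}};\vartheta_{0})$ and $Y^{\varepsilon,\vartheta_{0}}>0$. Write $\alpha_{\varepsilon}=\varepsilon^{2}\nu^{2}\zeta^{2}/2\le\alpha_{\varepsilon_{0}}$, $\gamma=\nu\zeta$, and $z_{0}:=Z_{0}^{Y,\varepsilon,\circ}$, which does not depend on $\varepsilon$.

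\textbf{Moment bounds.} Let $\tau_{m}:=\inf\{t:Z_{t}^{Y,\varepsilon,\circ}\ge m\}$ and apply It\^{o}'s formula to $f_{p}(z)=z^{p}$, $p=1,2,3$, stopped at $\tau_{m}$. On $[0,\tau_{m}]$ the stochastic integrand $p\varepsilon\gamma Z^{p-1/2}$ is bounded, so the martingale term has mean zero. The generator is
\[
    p z^{p-1}(\alpha_{\varepsilon}-\beta_{0}z)+\tfrac{1}{2}p(p-1)\varepsilon^{2}\gamma^{2}z^{p-1}.
\]
Since $\beta_{0}>0$, the term $-p\beta_{0}z^{p}$ can be dropped. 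Bounding $z^{p-1}\le1+z^{p}$ gives
\[
    \mathbb{E}^{\mathbb{Q}^{\varepsilon}}\big[Z_{t\wedge\tau_{m}}^{p}\big]\le z_{0}^{p}+C_{p}\int_{0}^{t}\big(1+\mathbb{E}^{\mathbb{Q}^{\varepsilon}}\big[Z_{s\wedge\tau_{m}}^{p}\big]\big)ds,
\]
where $C_{p}$ depends only on $\varepsilon_{0},\nu,\zeta$. Gr\"{o}nwall's inequality yields a bound uniform in $m$, $t\le T$ and $\varepsilon\le\varepsilon_{0}$. Nonexplosion of $Z$ gives $\tau_{m}\uparrow\infty$, inherited from $Y$ under $\mathbb{P}$ and hence $\mathbb{Q}^{\varepsilon}$-a.s. by equivalence, so Fatou's lemma gives the first claim.

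\textbf{Increment bound.} Write
\[
    Z_{u}-Z_{s}=\int_{s}^{u}(\alpha_{\varepsilon}-\beta_{0}Z_{r})dr+\varepsilon\gamma\int_{s}^{u}\sqrt{Z_{r}}\,d\widetilde{W}_{r}^{\varepsilon}
\]
and use $(x+y)^{2}\le2x^{2}+2y^{2}$.

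For the drift part, Cauchy--Schwarz gives
\[
    \Big(\int_{s}^{u}(\alpha_{\varepsilon}-\beta_{0}Z_{r})dr\Big)^{2}\le|u-s|\int_{s}^{u}2\big(\alpha_{\varepsilon_{0}}^{2}+\beta_{0}^{2}Z_{r}^{2}\big)dr.
\]
Taking expectations and using the uniform second-moment bound, this is at most $C|u-s|^{2}$.

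For the martingale part, the It\^{o} isometry (valid since $\mathbb{E}\int_{0}^{T}Z_{r}\,dr<\infty$ by the first moment bound) gives
\[
    \varepsilon^{2}\gamma^{2}\int_{s}^{u}\mathbb{E}^{\mathbb{Q}^{\varepsilon}}[Z_{r}]\,dr\le C\varepsilon^{2}|u-s|.
\]
Summing the two parts yields \eqref{auxiliary-oracle-increment-estimate-general}.

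\textbf{Main obstacle.} The only delicate point is that every constant must be uniform in $\varepsilon\in(0,\varepsilon_{0}]$, even though the measure $\mathbb{Q}^{\varepsilon}$ itself varies with $\varepsilon$. This is handled by working entirely from the $\mathbb{Q}^{\varepsilon}$-dynamics. Their coefficients $\alpha_{\varepsilon}$ and $\varepsilon\gamma$ are bounded by their values at $\varepsilon_{0}$, $\beta_{0}$ is fixed, and the initial value $z_{0}$ does not depend on $\varepsilon$. Consequently the Gr\"{o}nwall constants are $\varepsilon$-free, and no change-of-measure estimate is needed.
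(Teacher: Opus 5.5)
Your proof is correct and follows the paper's argument. Like the paper, you apply It\^{o}'s formula to $z^{p}$, $p=1,2,3$, stopped at $\tau_{m}$, drop the nonpositive $-p\beta_{0}z^{p}$ term, pass to the limit via nonexplosion and Fatou, and obtain the increment bound from Cauchy--Schwarz and It\^{o}'s isometry. The only cosmetic difference is that you close the moment recursion with $z^{p-1}\le 1+z^{p}$ and Gr\"{o}nwall, whereas the paper keeps the explicit $\varepsilon^{2}$ factor and inducts on $p$ using the $(p-1)$th moment; both give bounds uniform in $\varepsilon\le\varepsilon_{0}$.
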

\begin{proof}
Write $Z_{t}=Z_{t}^{Y,\varepsilon,\circ}$ and $z_{0}=Z_{0}$. For $m>z_{0}$, let
$\tau_{m}=\inf\{t\geq0:Z_{t}\geq m\}$. Applying It\^{o}'s formula to $(Z_{t\wedge\tau_{m}})^{p}$, $p=1,2,3$, and taking expectations, the stopped martingale
term has mean $0$. Dropping the nonpositive drift term and using $\alpha_{\varepsilon}=O(\varepsilon^{2})$ gives
\[
    \mathbb{E}^{\mathbb{Q}^{\varepsilon}}\Big[\big(Z_{t\wedge\tau_{m}}\big)^{p}\Big]\leq z_{0}^{p}+C\varepsilon^{2}\int_{0}^{t}\mathbb{E}^{\mathbb{Q}^{\varepsilon}}\Big[\big(Z_{v\wedge\tau_{m}}\big)^{p-1}\Big]dv,\qquad p=1,2,3,
\]
where $Z^{0}=1$ and $C$ is independent of $\varepsilon$ and $m$. Induction over $p$ yields moment bounds uniform in $t\in[0,T]$, $m$, and $0<\varepsilon\leq\varepsilon_{0}$. Nonexplosion and Fatou's lemma then give the asserted bounds for $Z_{t}$.

For $0\leq s\leq u\leq T$, the SDE, the Cauchy--Schwarz inequality, and It\^{o}'s isometry yield
\[
    \mathbb{E}^{\mathbb{Q}^{\varepsilon}}\big[|Z_u-Z_s|^{2}\big]\leq 2(u-s)\int_{s}^{u}\mathbb{E}^{\mathbb{Q}^{\varepsilon}}\big[|\alpha_{\varepsilon}-\beta_{0}Z_{v}|^{2}\big]dv+2\varepsilon^{2}\gamma^{2}\int_{s}^{u}\mathbb{E}^{\mathbb{Q}^{\varepsilon}}[Z_{v}]dv\leq C\big((u-s)^{2}+\varepsilon^{2}(u-s)\big),
\]
by the uniform first- and second-moment bounds.
\end{proof}

\begin{theorem}[Oracle LSE under the Auxiliary Measure]
\label{thm:auxiliary-oracle-lse}
Suppose that Assumptions~\ref{ass:A1},~\ref{ass:A2},~\ref{ass:A3} hold, together with $\Delta_{n}=o(\varepsilon)$. Then the auxiliary oracle estimator $\widehat{\beta}^{Y,\circ}_{\varepsilon,n}$ satisfies
\[
      \widehat{\beta}_{\varepsilon,n}^{Y,\circ}\xrightarrow{\mathbb{Q}^{\varepsilon}\text{-prob.}}\beta_{0}\quad\text{and}\quad\varepsilon^{-1}\big(\widehat{\beta}_{\varepsilon,n}^{Y,\circ}-\beta_{0}\big)\xrightarrow{\mathbb{Q}^{\varepsilon}\text{-prob.}}\xi_{\beta_{0}},\quad\text{where}\quad\xi_{\beta_{0}}:=-\gamma\frac{\int_{0}^{T}(\mathcal{Z}_{t})^{3/2}dW_{t}}{\int_{0}^{T}(\mathcal{Z}_{t})^{2}dt}.
\]
Moreover, $\xi_{\beta_{0}}$ is $\mathcal{F}_{T}$-measurable and $\mathcal{L}_{\mathbb{P}}(\xi_{\beta_{0}})=\mathcal{N}(0,V_{\beta_{0}})$, where $V_{\beta_{0}}=\big(\gamma^{2}\int_{0}^{T}(\mathcal{Z}_{t})^{3}dt\big)\big/\big(\int_{0}^{T}(\mathcal{Z}_{t})^{2}dt\big)^{2}$.
\end{theorem}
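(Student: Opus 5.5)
We decompose the estimation error of the path functional $\mathcal{B}_{\varepsilon,n}$ applied to $Z=Z^{Y,\varepsilon,\circ}$ under $\mathbb{Q}^{\varepsilon}$. Insert the dynamics \eqref{auxiliary-oracle-cir-dynamics} into each increment:
\[
\Delta_{i}Z=\int_{t_{i-1}}^{t_{i}}(\alpha_{\varepsilon}-\beta_{0}Z_{s})ds+\varepsilon\gamma\int_{t_{i-1}}^{t_{i}}\sqrt{Z_{s}}\,d\widetilde{W}_{s}^{\varepsilon}.
\]
This gives the exact identity
\[
\widehat{\beta}_{\varepsilon,n}^{Y,\circ}-\beta_{0}=\frac{\beta_{0}R_{n}^{(1)}-\varepsilon\gamma M_{n}}{\Delta_{n}\sum_{i}Z_{t_{i-1}}^{2}}.
\]
The two numerator terms are
\[
R_{n}^{(1)}:=\sum_{i}Z_{t_{i-1}}\int_{t_{i-1}}^{t_{i}}(Z_{s}-Z_{t_{i-1}})ds,\qquad M_{n}:=\sum_{i}Z_{t_{i-1}}\int_{t_{i-1}}^{t_{i}}\sqrt{Z_{s}}\,d\widetilde{W}_{s}^{\varepsilon}.
\]
The $\alpha_{\varepsilon}$ terms cancel exactly, which is why the estimator subtracts $\alpha_{\varepsilon}\Delta_{n}\sum Z_{t_{i-1}}$. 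It therefore suffices to establish three things under $\mathbb{Q}^{\varepsilon}$:
\begin{enumerate}[(i)]
\item $\Delta_{n}\sum_{i}Z_{t_{i-1}}^{2}\to\int_{0}^{T}\mathcal{Z}_{t}^{2}dt>0$ in probability;
\item $\varepsilon^{-1}R_{n}^{(1)}\to0$ in probability;
\item $M_{n}-\int_{0}^{T}\mathcal{Z}_{t}^{3/2}d\widetilde{W}_{t}^{\varepsilon}\to0$ in probability.
\end{enumerate}

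\textbf{Step (i).} Under $\mathbb{Q}^{\varepsilon}$, compare \eqref{auxiliary-oracle-cir-dynamics} with $d\mathcal{Z}_{t}=-\beta_{0}\mathcal{Z}_{t}dt$. Since $Z_{t}-\mathcal{Z}_{t}$ solves a linear equation with forcing $\alpha_{\varepsilon}+\varepsilon\gamma\sqrt{Z}\,d\widetilde{W}$, variation of constants together with BDG and the moment bounds of Lemma~\ref{lem:auxiliary-pseudo-cir-bounds} yields
\[
\mathbb{E}^{\mathbb{Q}^{\varepsilon}}\sup_{t\leq T}|Z_{t}-\mathcal{Z}_{t}|^{2}\leq C\varepsilon^{2}.
\]
We work directly under $\mathbb{Q}^{\varepsilon}$ here rather than transferring, because only the $\mathbb{Q}^{\varepsilon}$-dynamics are used. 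Combining this with the Riemann-sum approximation of the continuous $\mathcal{Z}^{2}$, and with $Z^{2}-\mathcal{Z}^{2}=(Z-\mathcal{Z})(Z+\mathcal{Z})$ controlled by Cauchy--Schwarz and the third-moment bound, gives (i).

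\textbf{Step (ii).} Use Cauchy--Schwarz together with \eqref{auxiliary-oracle-increment-estimate-general}:
\[
\mathbb{E}|R_{n}^{(1)}|\leq\sum_{i}\big(\mathbb{E}Z_{t_{i-1}}^{2}\big)^{1/2}\int_{t_{i-1}}^{t_{i}}\big(\mathbb{E}|Z_{s}-Z_{t_{i-1}}|^{2}\big)^{1/2}ds\leq C\big(\Delta_{n}+\varepsilon\Delta_{n}^{1/2}\big).
\]
This is $o(\varepsilon)$ because $\Delta_{n}=o(\varepsilon)$ and $\Delta_{n}^{1/2}\to0$.

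\textbf{Step (iii).} The difference $M_{n}-\int_{0}^{T}\mathcal{Z}^{3/2}d\widetilde{W}^{\varepsilon}$ is a stochastic integral of $Z_{t_{i-1}}\sqrt{Z_{s}}-\mathcal{Z}_{s}^{3/2}$ on each subinterval. By It\^o's isometry its second moment is
\[
\int_{0}^{T}\mathbb{E}\big|Z_{\lfloor s\rfloor}\sqrt{Z_{s}}-\mathcal{Z}_{s}^{3/2}\big|^{2}ds.
\]
Split the integrand as $(Z_{\lfloor s\rfloor}-\mathcal{Z}_{\lfloor s\rfloor})\sqrt{Z_{s}}+(\mathcal{Z}_{\lfloor s\rfloor}-\mathcal{Z}_{s})\sqrt{Z_{s}}+\mathcal{Z}_{s}(\sqrt{Z_{s}}-\sqrt{\mathcal{Z}_{s}})$. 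For the last piece, use $|\sqrt{z}-\sqrt{w}|\leq|z-w|/\sqrt{w}$ with $\inf\mathcal{Z}>0$. Each piece is then $O(\varepsilon^{2}+\Delta_{n}^{2})$ in $L^{2}$, so $M_{n}$ has the stated limit.

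\textbf{Identifying the limit.} The main subtlety, and the step I expect to be the obstacle, is the meaning of convergence in probability to $\xi_{\beta_{0}}$, which is written with $W$ rather than $\widetilde{W}^{\varepsilon}$. The difference is
\[
\int_{0}^{T}\mathcal{Z}_{t}^{3/2}d(\widetilde{W}^{\varepsilon}-W)_{t}=\int_{0}^{T}\mathcal{Z}_{t}^{3/2}q_{t}^{\varepsilon,\vartheta_{0}}dt.
\]
By Cauchy--Schwarz and Theorem~\ref{thm:total-variation-closeness}, its $L^{1}(\mathbb{P})$ norm is $O(\varepsilon)$. By Theorem~\ref{thm:transfer-convergence} it is therefore also $o(1)$ in $\mathbb{Q}^{\varepsilon}$-probability. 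Combining (i)--(iii) with Slutsky's lemma and this replacement yields
\[
\varepsilon^{-1}\big(\widehat{\beta}_{\varepsilon,n}^{Y,\circ}-\beta_{0}\big)\to-\gamma\frac{\int_{0}^{T}\mathcal{Z}^{3/2}dW}{\int_{0}^{T}\mathcal{Z}^{2}dt}=\xi_{\beta_{0}}
\]
in $\mathbb{Q}^{\varepsilon}$-probability. Consistency follows by multiplying by $\varepsilon\to0$.

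Finally, $\xi_{\beta_{0}}$ is a Wiener integral of a deterministic integrand with respect to the $\mathbb{P}$-Brownian motion $W$. It is therefore $\mathcal{F}_{T}$-measurable and centered Gaussian under $\mathbb{P}$, with variance $V_{\beta_{0}}$ by It\^o's isometry.
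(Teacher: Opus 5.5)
Your proposal is correct and follows the paper's proof essentially step for step. It uses the same exact error identity (your $\beta_{0}R_{n}^{(1)}$ is the paper's $-R_{\varepsilon,n}^{Y}$), the same $O(\Delta_{n}+\varepsilon\sqrt{\Delta_{n}})$ remainder bound, and the same replacement of $\widetilde{W}^{\varepsilon}$ by $W$ via Theorems~\ref{thm:total-variation-closeness} and~\ref{thm:transfer-convergence}.

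The differences are only technical. You obtain $\sup_{t\leq T}|Z_{t}-\mathcal{Z}_{t}|\to0$ directly under $\mathbb{Q}^{\varepsilon}$ by variation of constants and BDG, rather than via Lemma~\ref{lem:uniform-deterministic-limit} plus transfer. You also use It\^{o}'s isometry instead of Lenglart's inequality. There, the first piece of Step (iii), $\mathbb{E}^{\mathbb{Q}^{\varepsilon}}[(Z_{\lfloor s\rfloor}-\mathcal{Z}_{\lfloor s\rfloor})^{2}Z_{s}]=O(\varepsilon^{2})$, needs $\mathbb{E}^{\mathbb{Q}^{\varepsilon}}\sup_{t\leq T}|Z_{t}-\mathcal{Z}_{t}|^{4}\leq C\varepsilon^{4}$. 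This follows from the same BDG argument with exponent $4$ together with the second-moment bound of Lemma~\ref{lem:auxiliary-pseudo-cir-bounds}.
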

\begin{proof}
Write $Z_{t}=Z_{t}^{Y,\varepsilon,\circ}$ and let $\eta_{n}(s)=t_{i-1}$ for $s\in(t_{i-1},t_{i}]$. 

\textbf{Step~1 (Error decomposition)}: For $\beta\in\mathbb{R}$, define the unweighted Euler contrast
\[
    \Psi_{\varepsilon,n}^{Y}(\beta):=\frac{1}{\varepsilon^{2}\Delta_{n}}\sum_{i=1}^{n}\big[\Delta_{i}Z-(\alpha_{\varepsilon}-\beta Z_{t_{i-1}})\Delta_{n}\big]^{2}.
\]
Differentiation gives
\[
    \partial_{\beta}\Psi_{\varepsilon,n}^{Y}(\beta)=\frac{2}{\varepsilon^{2}}\sum_{i=1}^{n}Z_{t_{i-1}}\big[\Delta_{i}Z-\alpha_{\varepsilon}\Delta_{n}+\beta Z_{t_{i-1}}\Delta_{n}\big],\qquad\partial_{\beta}^{2}\Psi_{\varepsilon,n}^{Y}(\beta)=\frac{2\Delta_{n}}{\varepsilon^{2}}\sum_{i=1}^{n}\big(Z_{t_{i-1}}\big)^{2}>0,
\]
where strict positivity follows from $Z_{0}>0$. Thus, solving $\partial_{\beta}\Psi_{\varepsilon,n}^{Y}(\beta)=0$ gives the unique minimizer $\widehat\beta_{\varepsilon,n}^{Y,\circ}=\mathcal{B}_{\varepsilon,n}(Z)$ defined by \eqref{path-functional-B}. Define
\[
    D_{\varepsilon,n}^{Y}:=\int_{0}^{T}\big(Z_{\eta_{n}(s)}\big)^{2}ds,\qquad
    M_{\varepsilon,n}^{Y}:=\int_{0}^{T}Z_{\eta_{n}(s)}\sqrt{Z_{s}}d\widetilde{W}_{s}^{\varepsilon},\qquad 
    R_{\varepsilon,n}^{Y}:=-\beta_{0}\int_{0}^{T}Z_{\eta_{n}(s)}(Z_{s}-Z_{\eta_{n}(s)})ds.
\]
Substituting \eqref{auxiliary-oracle-cir-dynamics} into \eqref{path-functional-B} gives
\begin{equation}\label{contrast-derived-normalized-error}
    \varepsilon^{-1}(\widehat\beta_{\varepsilon,n}^{Y,\circ}-\beta_{0})=-\gamma\frac{M_{\varepsilon,n}^{Y}}{D_{\varepsilon,n}^{Y}}-\frac{R_{\varepsilon,n}^{Y}}{\varepsilon D_{\varepsilon,n}^{Y}},
\end{equation}
where $D_{\varepsilon,n}^{Y}\geq\Delta_{n}(Z_{0})^{2}>0$.

\textbf{Step~2 (Denominator and remainder)}: Lemma~\ref{lem:uniform-deterministic-limit}, local Lipschitz continuity of $\mathcal{T}(\cdot;\vartheta_{0})$ near the positive trajectory $\mathcal{X}$, and
Theorem~\ref{thm:transfer-convergence} yield $\sup_{t\leq T}|Z_{t}-\mathcal{Z}_{t}|=o_{\mathbb{Q}^{\varepsilon}}(1)$. Since $\mathcal{Z}$ is continuous and strictly positive,
\begin{equation}\label{contrast-denominator-limit}
    D_{\varepsilon,n}^{Y}\xrightarrow{\mathbb{Q}^{\varepsilon}\text{-prob.}}D_{0}:=\int_{0}^{T}\big(\mathcal{Z}_{t}\big)^{2}dt>0.
\end{equation}
By the Cauchy--Schwarz inequality and the moment and increment bounds in Lemma~\ref{lem:auxiliary-pseudo-cir-bounds},
\[
    \mathbb{E}^{\mathbb{Q}^{\varepsilon}}\big[|R_{\varepsilon,n}^{Y}|\big]\leq C\int_{0}^{T}\big[(s-\eta_{n}(s))^{2}+\varepsilon^{2}(s-\eta_{n}(s))\big]^{1/2}ds\leq C(\Delta_{n}+\varepsilon\sqrt{\Delta_{n}}).
\]
Consequently, $\Delta_{n}=o(\varepsilon)$ implies $\varepsilon^{-1}R_{\varepsilon,n}^{Y}=O_{\mathbb{Q}^{\varepsilon}}(\Delta_{n}/\varepsilon+\sqrt{\Delta_{n}})=o_{\mathbb{Q}^{\varepsilon}}(1)$.

\textbf{Step~3 (Martingale approximation)}: Set $H_{\varepsilon,n}(s):=Z_{\eta_{n}(s)}\sqrt{Z_{s}}-(\mathcal{Z}_{s})^{3/2}$. The uniform convergence in \textbf{Step~2} and continuity of $\mathcal{Z}$ imply $\sup_{s\leq T}|H_{\varepsilon,n}(s)|=o_{\mathbb{Q}^{\varepsilon}}(1)$. Thus, the continuous local martingale
$N_{\varepsilon,n}(t):=\int_{0}^{t}H_{\varepsilon,n}(s)d\widetilde{W}_{s}^{\varepsilon}$ satisfies $\langle N_{\varepsilon,n}\rangle_{T}=\int_{0}^{T}|H_{\varepsilon,n}(s)|^{2}ds=o_{\mathbb{Q}^{\varepsilon}}(1)$. Lenglart's inequality therefore gives
\begin{equation}\label{oracle-martingale-tildeW}
    M_{\varepsilon,n}^{Y}=\int_{0}^{T}\big(\mathcal{Z}_{s}\big)^{3/2}d\widetilde{W}_{s}^{\varepsilon}+o_{\mathbb{Q}^{\varepsilon}}(1).
\end{equation}

\textbf{Step~4 (Measure transfer and conclusion)}: Since $d\widetilde{W}_{s}^{\varepsilon}=dW_{s}+q_{s}^{\varepsilon}ds$,
\[
    \int_{0}^{T}\big(\mathcal{Z}_{s}\big)^{3/2}d\widetilde{W}_{s}^{\varepsilon}=\int_{0}^{T}\big(\mathcal{Z}_{s}\big)^{3/2}dW_{s}+\int_{0}^{T}\big(\mathcal{Z}_{s}\big)^{3/2}q_{s}^{\varepsilon}ds.
\]
Theorem~\ref{thm:total-variation-closeness} and the Cauchy--Schwarz inequality give
\[
    \mathbb{E}^{\mathbb{P}}\Big[\Big|\int_{0}^{T}(\mathcal{Z}_{s})^{3/2}q_{s}^{\varepsilon}ds\Big|^{2}\Big]\leq\Big[\int_{0}^{T}\big(\mathcal{Z}_{s}\big)^{3}ds\Big]\mathbb{E}^{\mathbb{P}}\Big[\int_{0}^{T}\big|q_{s}^{\varepsilon}\big|^{2}ds\Big]=O(\varepsilon^{2}).
\]
The correction is therefore $o_{\mathbb{P}}(1)$ and, by Theorem~\ref{thm:transfer-convergence}, $o_{\mathbb{Q}^{\varepsilon}}(1)$. Hence,
\[
    M_{\varepsilon,n}^{Y}=\int_{0}^{T}\big(\mathcal{Z}_{s}\big)^{3/2}dW_{s}+o_{\mathbb{Q}^{\varepsilon}}(1).
\]
Combining the preceding estimates with \eqref{contrast-derived-normalized-error} yields
\[
    \varepsilon^{-1}(\widehat\beta_{\varepsilon,n}^{Y,\circ}-\beta_{0})\xrightarrow{\mathbb{Q}^{\varepsilon}\text{-prob.}}-\gamma\frac{\int_{0}^{T}(\mathcal{Z}_{s})^{3/2}dW_{s}}{D_{0}}=\xi_{\beta_{0}}.
\]
Moreover, \eqref{oracle-martingale-tildeW} gives $M_{\varepsilon,n}^{Y}=O_{\mathbb{Q}^{\varepsilon}}(1)$, so \eqref{contrast-derived-normalized-error} also yields consistency. Since $\mathcal{Z}$ is deterministic, $\xi_{\beta_{0}}$ is $\mathcal{F}_{T}$-measurable and has the stated Gaussian law under $\mathbb{P}$.
\end{proof}

Combining Theorem~\ref{thm:auxiliary-oracle-lse} and Theorem~\ref{thm:transfer-convergence} gives the corresponding result
under the original measure:
\begin{equation}\label{auxiliary-oracle-under-original-measure}
    \widehat{\beta}_{\varepsilon,n}^{Y,\circ}\xrightarrow{\mathbb{P}\text{-prob.}}\beta_{0},\qquad\varepsilon^{-1}\big(\widehat{\beta}_{\varepsilon,n}^{Y,\circ}-\beta_{0}\big)\xrightarrow{\mathbb{P}\text{-prob.}}\xi_{\beta_{0}},
\end{equation}
and $\mathcal{L}_{\mathbb{P}}(\xi_{\beta_{0}})=\mathcal{N}(0,V_{\beta_{0}})$, where $\xi_{\beta_{0}}$ and $V_{\beta_{0}}$ are as defined in Theorem~\ref{thm:auxiliary-oracle-lse}. In particular, the measure transfer does not alter the limiting random variable or its variance.

We next establish first-order equivalence between the auxiliary oracle and the CEV-based oracle estimator $\widehat{\beta}_{\varepsilon,n}^{X,\circ}$ defined in Subsection~\ref{subsec:feasible-oracles}.

\begin{theorem}[Auxiliary-to-CEV oracle equivalence]\label{thm:auxiliary-cev-oracle-equivalence}
Under the assumptions of Theorem~\ref{thm:auxiliary-oracle-lse},
\[
    \varepsilon^{-1}\big(\widehat{\beta}_{\varepsilon,n}^{X,\circ}-\widehat{\beta}_{\varepsilon,n}^{Y,\circ}\big)\xrightarrow{\mathbb{P}\text{-prob.}}0.
\]
\end{theorem}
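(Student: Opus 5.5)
The plan is to write $\mathcal{B}_{\varepsilon,n}$ in a form that is stable under uniform perturbations of the path, and then bound the difference between the two transformed paths $Z^{X,\varepsilon,\circ}$ and $Z^{Y,\varepsilon,\circ}$. Everything is done on the localization event $\Omega_{\delta,M}^{\varepsilon,\vartheta_{0}}$ of Lemma~\ref{lem:local-to-CEV-process-approx}. Take $\delta:=\tfrac12 x_{0}e^{\mu T}$ and $M:=2x_{0}$. By \eqref{deterministic-limit-bounds} and Lemma~\ref{lem:uniform-deterministic-limit}, $\mathbb{P}(\Omega_{\delta,M}^{\varepsilon,\vartheta_{0}})\to1$, so it suffices to prove $o_{\mathbb{P}}(\varepsilon)$ bounds after multiplying by its indicator. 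On this event $X$ never reaches $0$ on $[0,T]$. Moreover $\mathcal{T}(\cdot;\vartheta_{0})$ is Lipschitz and $C^{2}$ on $[\delta/2,2M]$. Hence Lemma~\ref{lem:local-to-CEV-process-approx} gives $\sup_{t\le T}|Z^{X,\varepsilon,\circ}_{t}-Z^{Y,\varepsilon,\circ}_{t}|=O_{\mathbb{P}}(\varepsilon^{2})$, and both transformed paths stay in a fixed compact subset of $(0,\infty)$.

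The first step is the summation-by-parts identity $\sum_{i}U_{t_{i-1}}\Delta_{i}U=\tfrac12(U_{T}^{2}-U_{0}^{2})-\tfrac12\sum_{i}(\Delta_{i}U)^{2}$. It gives
\[
\mathcal{B}_{\varepsilon,n}(U)=\frac{\alpha_{\varepsilon}\Delta_{n}\sum_{i}U_{t_{i-1}}-\tfrac12(U_{T}^{2}-U_{0}^{2})+\tfrac12\sum_{i}(\Delta_{i}U)^{2}}{\Delta_{n}\sum_{i}U_{t_{i-1}}^{2}}.
\]
The denominators for $U=Z^{X,\varepsilon,\circ}$ and $U=Z^{Y,\varepsilon,\circ}$ differ by $O_{\mathbb{P}}(\varepsilon^{2})$. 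Both converge to $D_{0}>0$, by \eqref{contrast-denominator-limit} transferred to $\mathbb{P}$ via Theorem~\ref{thm:transfer-convergence}. Likewise, the $\alpha_{\varepsilon}$ term and the boundary term $U_{T}^{2}-U_{0}^{2}$ differ by $O_{\mathbb{P}}(\varepsilon^{2})$; note $U_{0}$ is common to both paths. The numerators themselves are $O_{\mathbb{P}}(1)$ by \eqref{auxiliary-oracle-under-original-measure}. So everything reduces to showing
\[
\sum_{i}\big(\Delta_{i}Z^{X,\varepsilon,\circ}\big)^{2}-\sum_{i}\big(\Delta_{i}Z^{Y,\varepsilon,\circ}\big)^{2}=o_{\mathbb{P}}(\varepsilon).
\]

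This realized-variance comparison is the main obstacle. A uniform bound on the path difference alone is useless here, since it would be multiplied by $n$. I would write the difference as $\sum_{i}\Delta_{i}D\,\Delta_{i}S$, with $D:=Z^{X,\varepsilon,\circ}-Z^{Y,\varepsilon,\circ}$ and $S:=Z^{X,\varepsilon,\circ}+Z^{Y,\varepsilon,\circ}$, and apply Cauchy--Schwarz to get the bound $\big(\sum(\Delta_{i}D)^{2}\big)^{1/2}\big(\sum(\Delta_{i}S)^{2}\big)^{1/2}$.

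Let $\tau$ be the exit time from $(\delta/2,2M)^{2}$ used in Lemma~\ref{lem:local-to-CEV-process-approx}. Before $\tau$, It\^o's formula (valid under $\mathbb{P}$ because $X>0$) gives both transformed processes drift $O(1)$ and diffusion $\varepsilon\nu\zeta\sqrt{Z}$. Hence $\mathbb{E}\sum_{i}(\Delta_{i}S_{\cdot\wedge\tau})^{2}=O(\varepsilon^{2}+\Delta_{n})$, which is $O(\varepsilon^{2})$ since $\Delta_{n}=o(\varepsilon)$. For $D$, subtracting the two It\^o dynamics leaves a drift difference and a diffusion difference.
\begin{itemize}
\item The drift difference comes from the $\varepsilon^{2}a^{*}$ term, the It\^o correction with its $\varepsilon^{2}$ factor, and $\beta_{0}D$. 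It is $O(\varepsilon^{2})$ on the compact set, using $|D|=O(\varepsilon^{2})$.
\item The diffusion difference is $\varepsilon\nu\zeta(\sqrt{Z^{X}}-\sqrt{Z^{Y}})$, which is $O(\varepsilon|D|)$ by Lipschitz continuity of the square root away from $0$.
\end{itemize}
Using \eqref{L2-estimate-D}, It\^o's isometry and Cauchy--Schwarz on each interval then gives $\mathbb{E}\sum_{i}(\Delta_{i}D_{\cdot\wedge\tau})^{2}\le C(\varepsilon^{4}\Delta_{n}+\varepsilon^{2}\varepsilon^{4})=O(\varepsilon^{6})$. Since $\tau>T$ on $\Omega_{\delta,M}^{\varepsilon,\vartheta_{0}}$, Markov's inequality bounds the product by $O_{\mathbb{P}}(\varepsilon^{3}\cdot\varepsilon)=o_{\mathbb{P}}(\varepsilon)$.

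Combining these bounds in the displayed form of $\mathcal{B}_{\varepsilon,n}$, the quotient difference is $O_{\mathbb{P}}(\varepsilon^{2})=o_{\mathbb{P}}(\varepsilon)$, which proves the claim.
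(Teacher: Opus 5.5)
Your proposal is correct and follows essentially the same route as the paper: localization, the summation-by-parts rewrite of $\mathcal{B}_{\varepsilon,n}$, $O_{\mathbb{P}}(\varepsilon^{2})$ comparisons of the denominators and endpoint terms, and a Cauchy--Schwarz bound on $\sum_i\Delta_i D\,\Delta_i S$ with $\sum_i(\Delta_i D)^2$ controlled through the It\^o dynamics of the difference. You also get a sharper $\varepsilon^{6}$ diffusion bound from Lipschitz continuity of $\sqrt{\cdot}$ away from $0$, where the paper uses $|\sqrt{x}-\sqrt{y}|^{2}\le|x-y|$; the remaining slips are harmless (the It\^o correction cancels in the drift difference, and $\Delta_n=o(\varepsilon)$ does not give $\varepsilon^{2}+\Delta_n=O(\varepsilon^{2})$ or $\varepsilon^{4}\Delta_n=O(\varepsilon^{6})$), since the corrected product bound $\varepsilon^{2}(\varepsilon^{2}+\Delta_n)$ is still $o(\varepsilon)$.
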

\begin{proof}
\textbf{Step~1 (Localization and denominator comparison)}: Choose $0<\delta<\inf_{t\leq T}\mathcal{X}_{t}$ and $M>\sup_{t\leq T}\mathcal{X}_{t}$, define
\[
	\tau^{\varepsilon}:=\inf\big\{t\geq0:\big(X_{t}^{\varepsilon,\vartheta_{0}},Y_{t}^{\varepsilon,\vartheta_{0}}\big)\notin(\delta,M)^{2}\big\},\qquad\delta Z_{t}^{\varepsilon}:=Z_{t}^{X,\varepsilon,\circ}-Z_{t}^{Y,\varepsilon,\circ}.
\]
Lemma~\ref{lem:uniform-deterministic-limit} gives $\mathbb{P}(\tau^{\varepsilon}\leq T)\to0$. The stopped $L^{2}$ argument in Lemma~\ref{lem:local-to-CEV-process-approx}, together with the local Lipschitz continuity of $\mathcal{T}(\cdot;\vartheta_{0})$, yields
\begin{equation}\label{transformed-xy-path-difference}
	\mathbb{E}^{\mathbb{P}}\Big[\sup_{t\leq T}|\delta Z_{t\wedge\tau^{\varepsilon}}^{\varepsilon}|^{2}\Big]\leq C\varepsilon^{4},\qquad\sup_{t\leq T}|\delta Z_{t}^{\varepsilon}|=O_{\mathbb{P}}(\varepsilon^{2}).
\end{equation}
The second assertion follows from Markov's inequality and $\mathbb{P}(\tau^{\varepsilon}\leq T)\to0$, since the stopped and unstopped paths coincide on $\{\tau^{\varepsilon}>T\}$. Both transformed paths are also bounded in probability uniformly on $[0,T]$.

For a discrete path $V=\{V_{t_{i}}\}_{i=0}^{n}$, write
\[
	\mathfrak{N}_{\varepsilon,n}(V):=\alpha_{\varepsilon}\Delta_{n}\sum_{i=1}^{n}V_{t_{i-1}}-\sum_{i=1}^{n}V_{t_{i-1}}\Delta_{i}V,\qquad\mathfrak{D}_{n}(V):=\Delta_{n}\sum_{i=1}^{n}(V_{t_{i-1}})^{2}.
\]
The two oracle estimators are the corresponding ratios $\mathfrak{N}_{\varepsilon,n}(V)/\mathfrak{D}_{n}(V)$. Since $n\Delta_{n}=T$, \eqref{transformed-xy-path-difference} implies
\[
	\mathfrak{D}_{n}(Z^{X,\varepsilon,\circ})-\mathfrak{D}_{n}(Z^{Y,\varepsilon,\circ})=O_{\mathbb{P}}(\varepsilon^{2}).
\]
Moreover, \eqref{contrast-denominator-limit} and Theorem~\ref{thm:transfer-convergence} give $\mathfrak{D}_{n}(Z^{Y,\varepsilon,\circ})\xrightarrow{\mathbb{P}\text{-prob.}}D_{0}:=\int_{0}^{T}(\mathcal{Z}_{t})^{2}dt>0$. Thus, $\mathfrak{D}_{n}(Z^{X,\varepsilon,\circ})$ has the same limit, and its reciprocal is $O_{\mathbb{P}}(1)$.

\textbf{Step~2 (Squared-increment comparison)}: Under $\mathbb{P}$, It\^{o}'s formula before $\tau^{\varepsilon}$ gives $d\delta Z_{t}^{\varepsilon}=m_{t}^{\varepsilon}dt+\Sigma_{t}^{\varepsilon}dW_{t}$, where
\[
	m_{t}^{\varepsilon}:=-\beta_{0}\delta Z_{t}^{\varepsilon}-\varepsilon^{2}a^{*}\frac{\zeta^{2}}{2(1-\vartheta_{0})}\big(Y_{t}^{\varepsilon,\vartheta_{0}}\big)^{1-2\vartheta_{0}},\qquad\Sigma_{t}^{\varepsilon}:=\varepsilon\gamma\Big(\sqrt{Z_{t}^{X,\varepsilon,\circ}}-\sqrt{Z_{t}^{Y,\varepsilon,\circ}}\Big).
\]
On $[0,T\wedge\tau^{\varepsilon}]$, $\big(Y_{t}^{\varepsilon,\vartheta_{0}}\big)^{1-2\vartheta_{0}}$ is bounded, so $|m_{t}^{\varepsilon}|^{2}\leq C\big(|\delta Z_{t}^{\varepsilon}|^{2}+\varepsilon^{4}\big)$. Moreover, $|\sqrt{x}-\sqrt{y}|^{2}\leq|x-y|$ gives $|\Sigma_{t}^{\varepsilon}|^{2}\leq\varepsilon^{2}\gamma^{2}|\delta Z_{t}^{\varepsilon}|$. Thus, \eqref{transformed-xy-path-difference} and the Cauchy--Schwarz inequality yield
\[
\begin{aligned}
    \mathbb{E}^{\mathbb{P}}\Big[\int_{0}^{T\wedge\tau^{\varepsilon}}\big|m_{s}^{\varepsilon}\big|^{2}ds\Big]&\leq C\mathbb{E}^{\mathbb{P}}\Big[\sup_{t\leq T}|\delta Z_{t\wedge\tau^{\varepsilon}}^{\varepsilon}|^{2}\Big]+C\varepsilon^{4}\leq C\varepsilon^{4},\\
    \mathbb{E}^{\mathbb{P}}\Big[\int_{0}^{T\wedge\tau^{\varepsilon}}\big|\Sigma_{s}^{\varepsilon}\big|^{2}ds\Big]&\leq C\varepsilon^{2}\Big\{\mathbb{E}^{\mathbb{P}}\Big[\sup_{t\leq T}|\delta Z_{t\wedge\tau^{\varepsilon}}^{\varepsilon}|^{2}\Big]\Big\}^{1/2}\leq C\varepsilon^{4}.
\end{aligned}
\]
The Cauchy--Schwarz inequality and It\^{o}'s isometry therefore yield
\[
	\mathbb{E}^{\mathbb{P}}\Big[\sum_{i=1}^{n}\big|\delta Z_{t_{i}\wedge\tau^{\varepsilon}}^{\varepsilon}-\delta Z_{t_{i-1}\wedge\tau^{\varepsilon}}^{\varepsilon}\big|^{2}\Big]\leq 2\Delta_{n}\mathbb{E}^{\mathbb{P}}\Big[\int_{0}^{T\wedge\tau^{\varepsilon}}|m_{s}^{\varepsilon}|^{2}ds\Big]+2\mathbb{E}^{\mathbb{P}}\Big[\int_{0}^{T\wedge\tau^{\varepsilon}}|\Sigma_{s}^{\varepsilon}|^{2}ds\Big]\leq C\varepsilon^{4}.
\]
By Markov's inequality and $\mathbb{P}(\tau^{\varepsilon}\leq T)\to0$,
\begin{equation}\label{realized-qv-delta-Z-bound}
	\sum_{i=1}^{n}\big(\Delta_{i}\delta Z^{\varepsilon}\big)^{2}=O_{\mathbb{P}}(\varepsilon^{4}).
\end{equation}
Similarly, the SDEs, the Cauchy--Schwarz inequality, It\^{o}'s isometry, and \eqref{X-Y-uniform-second-moment}, using $x^{2\vartheta_{0}}\leq1+x^{2}$, give
\[
	\mathbb{E}^{\mathbb{P}}\Big[\sum_{i=1}^{n}\big(|\Delta_{i}X^{\varepsilon,\vartheta_{0}}|^{2}+|\Delta_{i}Y^{\varepsilon,\vartheta_{0}}|^{2}\big)\Big]\leq C(\varepsilon^{2}+\Delta_{n}).
\]
On $\{\tau^{\varepsilon}>T\}$, the mapping $\mathcal{T}(\cdot;\vartheta_{0})$ is Lipschitz on $[\delta,M]$. Hence, Markov's inequality gives
\begin{equation}\label{transformed-process-realized-qv-bound}
	\sum_{i=1}^{n}\Big[\big(\Delta_{i}Z^{X,\varepsilon,\circ}\big)^{2}+\big(\Delta_{i}Z^{Y,\varepsilon,\circ}\big)^{2}\Big]=O_{\mathbb{P}}(\varepsilon^{2}+\Delta_{n}).
\end{equation}
Combining \eqref{realized-qv-delta-Z-bound} and \eqref{transformed-process-realized-qv-bound} with the Cauchy--Schwarz inequality yields \begin{equation}\label{xy-realized-qv-difference}
	\Big|\sum_{i=1}^{n}\Big[\big(\Delta_{i}Z^{X,\varepsilon,\circ}\big)^{2}-\big(\Delta_{i}Z^{Y,\varepsilon,\circ}\big)^{2}\Big]\Big|\leq\Big[\sum_{i=1}^{n}\big(\Delta_{i}\delta Z^{\varepsilon}\big)^{2}\Big]^{\frac{1}{2}}\Big[\sum_{i=1}^{n}\big(\Delta_{i}Z^{X,\varepsilon,\circ}+\Delta_{i}Z^{Y,\varepsilon,\circ}\big)^{2}\Big]^{\frac{1}{2}}=O_{\mathbb{P}}\big(\varepsilon^{2}\sqrt{\varepsilon^{2}+\Delta_{n}}\big).
\end{equation}
\textbf{Step~3 (Numerator comparison and oracle equivalence)}: For any discrete path $V$,
\begin{equation}\label{discrete-integration-by-parts-square}
	\sum_{i=1}^{n}V_{t_{i-1}}\Delta_{i}V=\frac{1}{2}\Big[V_{T}^{2}-V_{0}^{2}-\sum_{i=1}^{n}(\Delta_{i}V)^{2}\Big].
\end{equation}
Since $Z_{0}^{X,\varepsilon,\circ}
=Z_{0}^{Y,\varepsilon,\circ}$,
\eqref{discrete-integration-by-parts-square} gives
\[
    \mathfrak{N}_{\varepsilon,n}(Z^{X,\varepsilon,\circ})-\mathfrak{N}_{\varepsilon,n}(Z^{Y,\varepsilon,\circ})=\alpha_{\varepsilon}\Delta_{n}\sum_{i=1}^{n}\delta Z_{t_{i-1}}^{\varepsilon}-\frac{1}{2}\Big[\big(Z_{T}^{X,\varepsilon,\circ}\big)^{2}-\big(Z_{T}^{Y,\varepsilon,\circ}\big)^{2}\Big]+\frac{1}{2}\sum_{i=1}^{n}\Big[\big(\Delta_iZ^{X,\varepsilon,\circ}\big)^{2}-\big(\Delta_iZ^{Y,\varepsilon,\circ}\big)^{2}\Big].
\]
The drift-sum term is $O_{\mathbb{P}}(\varepsilon^{4})$ by $\alpha_{\varepsilon}=O(\varepsilon^{2})$, $n\Delta_{n}=T$, and \eqref{transformed-xy-path-difference}. The endpoint difference equals $\delta Z_{T}^{\varepsilon}(Z_{T}^{X,\varepsilon,\circ}+Z_{T}^{Y,\varepsilon,\circ})$ and is $O_{\mathbb{P}}(\varepsilon^{2})$ by the same path bound and the stochastic boundedness of the transformed processes. Together with \eqref{xy-realized-qv-difference}, this yields
\begin{equation}\label{numerator-convergence}
    \mathfrak{N}_{\varepsilon,n}(Z^{X,\varepsilon,\circ})-\mathfrak{N}_{\varepsilon,n}(Z^{Y,\varepsilon,\circ})=O_{\mathbb{P}}(\varepsilon^{4})+O_{\mathbb{P}}(\varepsilon^{2})+O_{\mathbb{P}}\big(\varepsilon^{2}\sqrt{\varepsilon^{2}+\Delta_{n}}\big)=o_{\mathbb{P}}(\varepsilon).
\end{equation}
Finally, Theorems~\ref{thm:auxiliary-oracle-lse} and~\ref{thm:transfer-convergence} imply $\widehat{\beta}_{\varepsilon,n}^{Y,\circ}=O_{\mathbb{P}}(1)$. Together with \textbf{Step~1} and \eqref{numerator-convergence}, this gives
\[
	\widehat{\beta}_{\varepsilon,n}^{X,\circ}-\widehat{\beta}_{\varepsilon,n}^{Y,\circ}=\frac{\mathfrak{N}_{\varepsilon,n}(Z^{X,\varepsilon,\circ})-\mathfrak{N}_{\varepsilon,n}(Z^{Y,\varepsilon,\circ})}{\mathfrak{D}_{n}(Z^{X,\varepsilon,\circ})}-\widehat{\beta}_{\varepsilon,n}^{Y,\circ}\frac{\mathfrak{D}_{n}(Z^{X,\varepsilon,\circ})-\mathfrak{D}_{n}(Z^{Y,\varepsilon,\circ})}{\mathfrak{D}_{n}(Z^{X,\varepsilon,\circ})}=o_{\mathbb{P}}(\varepsilon).\qedhere
\]
\end{proof}
Combining~\eqref{auxiliary-oracle-under-original-measure} with Theorem~\ref{thm:auxiliary-cev-oracle-equivalence} yields consistency and the Gaussian limit for the CEV-based oracle:
\[
    \widehat{\beta}_{\varepsilon,n}^{X,\circ}\xrightarrow{\mathbb{P}\text{-prob.}}\beta_{0},\qquad\varepsilon^{-1}\big(\widehat{\beta}_{\varepsilon,n}^{X,\circ}-\beta_{0}\big)\xrightarrow{\mathbb{P}\text{-prob.}}\xi_{\beta_{0}},
\]
where $\xi_{\beta_{0}}$ and $V_{\beta_{0}}$ are as defined in Theorem~\ref{thm:auxiliary-oracle-lse}, with $\mathcal{L}_{\mathbb{P}}(\xi_{\beta_{0}})=\mathcal{N}(0,V_{\beta_{0}})$.

\subsection{Asymptotic rate of the preliminary estimator}\label{subsec:B3}
To remove the oracle dependence of the state mapping on the unknown elasticity $\vartheta_{0}$ so that the state mapping becomes feasible, we replace the unknown elasticity with the preliminary estimator $\widetilde{\vartheta}_{\varepsilon,n}$ defined in \eqref{preliminary-estimator}. It is constructed from the local realized variation of the observed small-noise CEV process, following the quadratic-variation principle underlying realized volatility; see \cite{barndorff2002econometric}. Related work by \cite{mishura2022parameter} applies this idea to CKLS elasticity estimation through quadratic-variation-based quantities, without explicitly treating realized-variation approximation under high-frequency infill sampling. Here, we adapt this idea to a high-frequency infill setting using realized variation over a shrinking window and establish the convergence rate required for the subsequent plug-in analysis.

\begin{lemma}[Corrected local-window approximation for the target CEV process]\label{lem:CEV-holder-continuity}
Let $X^{\varepsilon,\vartheta_{0}}$ be the target CEV process with fixed $\vartheta_{0}\in(1/2,1)$. Let $\Phi_{n}(\vartheta)=(e^{2\mu\vartheta h_{n}}-1)/(2\mu\vartheta h_{n})$. Under Assumption~\ref{ass:A4}, as $\varepsilon\to0$ and $n\to\infty$,
\begin{equation}\label{local-window-approximation-rate}
    \frac{1}{h_{n}}\int_{\tau_{n}}^{\tau_{n}'}\big(X_{s}^{\varepsilon,\vartheta_{0}}\big)^{2\vartheta_{0}}ds-\big(X_{\tau_{n}}^{\varepsilon,\vartheta_{0}}\big)^{2\vartheta_{0}}\Phi_{n}(\vartheta_{0})=O_{\mathbb{P}}\big(\varepsilon\sqrt{h_{n}}\big).
\end{equation}
\end{lemma}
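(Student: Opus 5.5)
Write $X=X^{\varepsilon,\vartheta_{0}}$ and $f(x)=x^{2\vartheta_{0}}$. The idea is to compare $X_{\tau_n+u}$, for $u\in[0,h_n]$, with the deterministic within-window flow $e^{\mu u}X_{\tau_n}$, and then push this comparison through $f$. Variation of constants on the window (the same identity as in Step~2 of Lemma~\ref{lem:uniform-deterministic-limit}, restarted at $\tau_n$) gives
\[
X_{\tau_n+u}-e^{\mu u}X_{\tau_n}=\varepsilon\nu e^{\mu u}\int_{\tau_n}^{\tau_n+u}e^{-\mu(s-\tau_n)}X_s^{\vartheta_0}dW_s=:\varepsilon R_u .
\]
Since $f(e^{\mu u}X_{\tau_n})=e^{2\mu\vartheta_0u}X_{\tau_n}^{2\vartheta_0}$, integrating over $u\in[0,h_n]$ and dividing by $h_n$ reproduces exactly $X_{\tau_n}^{2\vartheta_0}\Phi_n(\vartheta_0)$. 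Hence the left side of \eqref{local-window-approximation-rate} equals
\[
\frac{1}{h_n}\int_0^{h_n}\big[f(X_{\tau_n+u})-f(e^{\mu u}X_{\tau_n})\big]du .
\]

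\textbf{Step 1 (Size of the remainder).} By BDG and the uniform second-moment bound \eqref{X-Y-uniform-second-moment}, together with $x^{2\vartheta_0}\le 1+x^2$, we get
\[
\mathbb{E}\Big[\sup_{u\le h_n}|R_u|^2\Big]\le C\int_{\tau_n}^{\tau_n'}\big(1+\mathbb{E}X_s^2\big)ds\le Ch_n .
\]
Thus $\sup_{u\le h_n}|X_{\tau_n+u}-e^{\mu u}X_{\tau_n}|=O_{\mathbb{P}}(\varepsilon\sqrt{h_n})$.

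\textbf{Step 2 (Transfer through $f$).} The function $f$ is not Lipschitz on all of $[0,\infty)$ only in the sense of growth at infinity: since $2\vartheta_0>1$, $f'(x)=2\vartheta_0x^{2\vartheta_0-1}$ is continuous and bounded on every compact $[0,M]$, including near $0$. Therefore no lower localization is needed, and absorption at $0$ causes no difficulty. Take $M>2x_0$. Lemma~\ref{lem:uniform-deterministic-limit} and \eqref{deterministic-limit-bounds} give $\mathbb{P}(\sup_{t\le T}X_t\le M)\to1$. On this event both $X_{\tau_n+u}$ and $e^{\mu u}X_{\tau_n}$ lie in $[0,M]$, so by the mean value theorem
\[
|f(X_{\tau_n+u})-f(e^{\mu u}X_{\tau_n})|\le 2\vartheta_0M^{2\vartheta_0-1}\,\varepsilon|R_u| .
\]
Averaging over $u$ and using Step~1 yields $O_{\mathbb{P}}(\varepsilon\sqrt{h_n})$ on an event of probability tending to one, which is enough for an $O_{\mathbb{P}}$ statement.

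\textbf{Main obstacle.} The only delicate point is uniformity. We need that $\tau_n$ is deterministic (it is a grid point) and that the constant in Step~1 does not depend on $\tau_n$, $\varepsilon$ or $n$. Both follow from the uniform-in-time moment bound of Lemma~\ref{lem:uniform-deterministic-limit}. We also need to check that the restart identity for $R_u$ holds after absorption. It does, because the CEV SDE with absorbed continuation is still satisfied pathwise: both sides vanish consistently after $\tau_0^\varepsilon$, since the diffusion coefficient is zero there. If one wanted an $L^1$ rather than in-probability bound, the localization could be replaced by the estimate $|f(x)-f(y)|\le C(1+x+y)|x-y|$ combined with Cauchy--Schwarz and second moments. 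This avoids $M$ entirely and is the version I would actually write.
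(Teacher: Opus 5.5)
Your proof is correct and follows essentially the same route as the paper. You restart the variation-of-constants formula at the deterministic grid time $\tau_n$, note that averaging $(e^{\mu u}X_{\tau_n}^{\varepsilon,\vartheta_0})^{2\vartheta_0}$ over the window gives exactly $(X_{\tau_n}^{\varepsilon,\vartheta_0})^{2\vartheta_0}\Phi_n(\vartheta_0)$, bound the stochastic remainder by $O(\varepsilon\sqrt{u})$ in $L^2$, and transfer it through $x\mapsto x^{2\vartheta_0}$ using $2\vartheta_0-1>0$. The $L^1$ variant you say you would actually write (a polynomially weighted local-Lipschitz bound, Cauchy--Schwarz, moments, Markov) is essentially the paper's argument, which uses the weight $x^{2\vartheta_0-1}+y^{2\vartheta_0-1}$; your localization-plus-BDG version is only a cosmetic alternative, and you are right that \eqref{X-Y-uniform-second-moment} already supplies all the moments this lemma needs.
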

\begin{proof}
\textbf{Step~1 (Uniform positive moments)}: Following the stopping argument in the proof of Lemma~\ref{lem:auxiliary-pseudo-cir-bounds}, define $\tau_{m}:=\inf\{t\geq0:X_{t}^{\varepsilon,\vartheta_{0}}\geq m\}$ for $m>x_{0}$. For $p\geq2$, H\"{o}lder's and BDG inequalities applied to the stopped SDE, together with $x^{p\vartheta_{0}}\leq1+x^{p}$, give
\[
    \mathbb{E}^{\mathbb{P}}\Big[\sup_{0\leq r\leq t}\big(X_{r\wedge\tau_{m}}^{\varepsilon,\vartheta_{0}}\big)^{p}
    \Big]\leq C_{p}\Big(1+\int_{0}^{t}\mathbb{E}^{\mathbb{P}}\Big[\sup_{0\leq r\leq s}\big(X_{r\wedge\tau_{m}}^{\varepsilon,\vartheta_{0}}\big)^{p}\Big]ds\Big),
\]
where $C_{p}$ is independent of $m$ and $0<\varepsilon\leq\varepsilon_{0}$. Gr\"{o}nwall's inequality, nonexplosion, and Fatou's lemma therefore yield, also using Lyapunov's inequality for $0<q<2$,
\[
    \sup_{0<\varepsilon\leq\varepsilon_{0}}\mathbb{E}^{\mathbb{P}}\Big[\sup_{0\leq t\leq T}\big(X_{t}^{\varepsilon,\vartheta_{0}}\big)^{q}\Big]<\infty,\qquad q>0.
\]
\textbf{Step~2 (Corrected window approximation)}: For $0\leq u\leq h_{n}$, applying the variation-of-constants formula to \eqref{small-noise-CEV-def} gives $X_{\tau_{n}+u}^{\varepsilon,\vartheta_{0}}=e^{\mu u}X_{\tau_{n}}^{\varepsilon,\vartheta_{0}}+\mathfrak R_{n,u}^{\varepsilon}$, where
\[
    \mathfrak{R}_{n,u}^{\varepsilon}:=\varepsilon\nu\int_{\tau_{n}}^{\tau_{n}+u}e^{\mu(\tau_{n}+u-s)}\big(X_{s}^{\varepsilon,\vartheta_{0}}\big)^{\vartheta_{0}}dW_{s}.
\]
It\^{o}'s isometry and \textbf{Step~1} imply $\mathbb{E}^{\mathbb{P}}[|\mathfrak{R}_{n,u}^{\varepsilon}|^{2}]\leq C\varepsilon^{2}u$. Using $|x^{2\vartheta_{0}}-y^{2\vartheta_{0}}|\leq C(x^{2\vartheta_{0}-1}+y^{2\vartheta_{0}-1})|x-y|$ for $x,y\geq0$, the Cauchy--Schwarz inequality and \textbf{Step~1} yield
\[
    \mathbb{E}^{\mathbb{P}}\Big[\Big|\big(X_{\tau_{n}+u}^{\varepsilon,\vartheta_{0}}\big)^{2\vartheta_{0}}-\big(e^{\mu u}X_{\tau_{n}}^{\varepsilon,\vartheta_{0}}\big)^{2\vartheta_{0}}\Big|\Big]\leq C\big(\mathbb{E}^{\mathbb{P}}\big[|\mathfrak{R}_{n,u}^{\varepsilon}|^{2}\big]\big)^{1/2}\leq C\varepsilon\sqrt{u}.
\]
Since $\Phi_{n}(\vartheta_{0})=h_{n}^{-1}\int_{0}^{h_{n}}e^{2\mu\vartheta_{0}u}du$, integration gives
\[
    \mathbb{E}^{\mathbb{P}}\Big[\Big|\frac{1}{h_{n}}\int_{\tau_{n}}^{\tau_{n}'}\big(X_{s}^{\varepsilon,\vartheta_{0}}\big)^{2\vartheta_{0}}ds-\big(X_{\tau_{n}}^{\varepsilon,\vartheta_{0}}\big)^{2\vartheta_{0}}\Phi_{n}(\vartheta_{0})\Big|\Big]\leq\frac{C\varepsilon}{h_{n}}\int_{0}^{h_{n}}\sqrt{u}du\leq C\varepsilon\sqrt{h_{n}}.
\]
Markov's inequality proves the assertion.
\end{proof}

\begin{lemma}[Approximation of local quadratic variation by realized variation]\label{lem:local-RV-QV-approximation}
Recall that
\[
	\widehat{\mathcal{V}}_{\varepsilon,n}:=\sum_{i=j_{n}+1}^{j_{n}+m_{n}}\Big(X_{t_{i}}^{\varepsilon,\vartheta_{0}}-X_{t_{i-1}}^{\varepsilon,\vartheta_{0}}\Big)^{2},\quad\text{where}\quad j_{n}:=\lfloor\tau/\Delta_{n}\rfloor, \quad\tau_{n}=t_{j_{n}},\quad h_{n}=m_{n}\Delta_{n}, \quad\tau'_{n}:=\tau_{n}+h_{n}\leq T.
\]
Under Assumptions~\ref{ass:A1},~\ref{ass:A2},~\ref{ass:A4}
\begin{equation}\label{local-RV-QV-full-rate}
    \frac{\widehat{\mathcal{V}}_{\varepsilon,n}-\big\{[X^{\varepsilon,\vartheta_{0}}]_{\tau'_{n}}-[X^{\varepsilon,\vartheta_{0}}]_{\tau_{n}}\big\}}{\varepsilon^{2}\nu^{2}h_{n}}=O_{\mathbb{P}}\Big(\sqrt{\frac{\Delta_{n}}{h_{n}}}+\frac{\Delta_{n}}{\varepsilon\sqrt{h_{n}}}+\frac{\Delta_{n}}{\varepsilon^{2}}\Big).
\end{equation}
\end{lemma}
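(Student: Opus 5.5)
The plan is to decompose each squared increment by It\^{o}'s formula and bound the resulting pieces separately. Write $X=X^{\varepsilon,\vartheta_{0}}$, $\Delta_{i}X=A_{i}+M_{i}$ with drift part $A_{i}:=\mu\int_{t_{i-1}}^{t_{i}}X_{s}ds$ and martingale part $M_{i}:=\varepsilon\nu\int_{t_{i-1}}^{t_{i}}X_{s}^{\vartheta_{0}}dW_{s}$. Then
\[
(\Delta_{i}X)^{2}-\big([X]_{t_{i}}-[X]_{t_{i-1}}\big)=\big(M_{i}^{2}-\varepsilon^{2}\nu^{2}\textstyle\int_{t_{i-1}}^{t_{i}}X_{s}^{2\vartheta_{0}}ds\big)+2A_{i}M_{i}+A_{i}^{2},
\]
and I sum over $i=j_{n}+1,\dots,j_{n}+m_{n}$ and divide by $\varepsilon^{2}\nu^{2}h_{n}$. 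Throughout I use the uniform positive moments $\sup_{\varepsilon}\mathbb{E}[\sup_{t\leq T}X_{t}^{q}]<\infty$ from Step~1 of the proof of Lemma~\ref{lem:CEV-holder-continuity}. No inverse moments are needed, since only positive powers of $X$ appear, so absorption at $0$ causes no difficulty.

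\textbf{Martingale term.} By It\^{o}'s formula, $M_{i}^{2}-\varepsilon^{2}\nu^{2}\int_{t_{i-1}}^{t_{i}}X_{s}^{2\vartheta_{0}}ds=2\int_{t_{i-1}}^{t_{i}}(\mathcal{M}_{s}-\mathcal{M}_{t_{i-1}})\varepsilon\nu X_{s}^{\vartheta_{0}}dW_{s}$, where $\mathcal{M}$ denotes the stochastic-integral part of $X$. These summands are orthogonal martingale increments. Their second moments are bounded via BDG and Cauchy--Schwarz by $C\varepsilon^{4}\Delta_{n}^{2}$, using $\mathbb{E}|\mathcal{M}_{s}-\mathcal{M}_{t_{i-1}}|^{4}\leq C\varepsilon^{4}\Delta_{n}^{2}$. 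Summing over the $m_{n}$ indices gives an $L^{2}$ norm of order $\varepsilon^{2}\Delta_{n}\sqrt{m_{n}}=\varepsilon^{2}\sqrt{\Delta_{n}h_{n}}$. After normalization this contributes $O_{\mathbb{P}}(\sqrt{\Delta_{n}/h_{n}})$.

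\textbf{Cross and drift terms.} We have $|A_{i}|\leq|\mu|\Delta_{n}\sup_{t}X_{t}$, so $\sum_{i}A_{i}^{2}=O_{\mathbb{P}}(m_{n}\Delta_{n}^{2})=O_{\mathbb{P}}(h_{n}\Delta_{n})$. After normalization this is $O_{\mathbb{P}}(\Delta_{n}/\varepsilon^{2})$.

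For the cross term, $\mathbb{E}|A_{i}M_{i}|\leq(\mathbb{E}A_{i}^{2})^{1/2}(\mathbb{E}M_{i}^{2})^{1/2}\leq C\Delta_{n}\cdot\varepsilon\sqrt{\Delta_{n}}$, so a crude $L^{1}$ sum gives $\varepsilon m_{n}\Delta_{n}^{3/2}$. After normalization that is $\sqrt{\Delta_{n}}/\varepsilon$, which is too weak. To reach the stated $\Delta_{n}/(\varepsilon\sqrt{h_{n}})$, I would split $A_{i}=\mu\Delta_{n}X_{t_{i-1}}+\mu\int_{t_{i-1}}^{t_{i}}(X_{s}-X_{t_{i-1}})ds$.

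\emph{Leading part.} The leading part gives $\sum_{i}2\mu\Delta_{n}X_{t_{i-1}}M_{i}$, a sum of martingale differences. Its $L^{2}$ norm is $\Delta_{n}\varepsilon\sqrt{m_{n}\Delta_{n}}=\varepsilon\Delta_{n}\sqrt{h_{n}}$, which normalizes to $\Delta_{n}/(\varepsilon\sqrt{h_{n}})$.

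\emph{Remainder part.} The remainder satisfies $|\int(X_{s}-X_{t_{i-1}})ds|\lesssim\Delta_{n}(\Delta_{n}+\varepsilon\sqrt{\Delta_{n}})$ in $L^{2}$. Its product with $M_{i}$ therefore sums in $L^{1}$ to $m_{n}\Delta_{n}(\Delta_{n}+\varepsilon\sqrt{\Delta_{n}})\varepsilon\sqrt{\Delta_{n}}$. After normalization this is $O(\Delta_{n}^{3/2}/\varepsilon+\Delta_{n})$, and both pieces are dominated by the three stated rates.

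The main obstacle I expect is this cross term: getting the sharper rate requires exploiting the martingale-difference structure rather than crude $L^{1}$ bounds. Collecting the three contributions via Markov's and Chebyshev's inequalities then yields \eqref{local-RV-QV-full-rate}.
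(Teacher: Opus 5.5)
Your proposal is correct and follows essentially the same route as the paper. It uses the same three-term decomposition into drift-square, cross, and martingale-square terms, the same It\^{o}/BDG bound $O_{\mathbb{P}}(\varepsilon^{2}\sqrt{h_{n}\Delta_{n}})$ for the martingale-square term, and the same split $A_{i}=\mu\Delta_{n}X_{t_{i-1}}+\text{remainder}$ to obtain the sharp cross-term rate. The only cosmetic difference is that you bound the cross-term remainder termwise in $L^{1}$ rather than by Cauchy--Schwarz over the whole sum, which gives the identical order $\varepsilon h_{n}\Delta_{n}^{3/2}+\varepsilon^{2}h_{n}\Delta_{n}$.
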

\begin{proof}
Set
\[
    M_{t}:=\varepsilon\nu\int_{0}^{t}\big(X_{s}^{\varepsilon,\vartheta_{0}}\big)^{\vartheta_{0}}dW_{s},\qquad A_{i}:=\int_{t_{i-1}}^{t_{i}}X_{s}^{\varepsilon,\vartheta_{0}}ds.
\]
Throughout, sums run over $i=j_{n}+1,\ldots,j_{n}+m_{n}$, and $C$ is independent of $\varepsilon$ and $n$. We use the uniform positive-moment bounds established in the proof of Lemma~\ref{lem:CEV-holder-continuity}. Since $\Delta_{i}X^{\varepsilon,\vartheta_{0}}=\mu A_{i}+\Delta_{i}M$ and $[X^{\varepsilon,\vartheta_{0}}]=\langle M\rangle$, expanding the square gives
\begin{equation}\label{RV-QV-decomposition}
    \widehat{\mathcal{V}}_{\varepsilon,n}-\big\{[X^{\varepsilon,\vartheta_{0}}]_{\tau_{n}'}-[X^{\varepsilon,\vartheta_{0}}]_{\tau_{n}}\big\}=\mu^{2}\sum_{i}A_{i}^{2}+2\mu\sum_{i}A_{i}\Delta_{i}M+\sum_{i}\big[(\Delta_{i}M)^{2}-\Delta_{i}\langle M\rangle\big].
\end{equation}
\textbf{Step~1 (Drift-square term)}: By the Cauchy--Schwarz inequality and the moment bounds,
\begin{equation}\label{RV-QV-drift-square-rate}
    \mu^{2}\sum_{i}A_{i}^{2}\leq\mu^{2}\Delta_{n}\int_{\tau_{n}}^{\tau_{n}'}\big(X_{s}^{\varepsilon,\vartheta_{0}}\big)^{2}ds=O_{\mathbb{P}}(h_{n}\Delta_{n}).
\end{equation}
\textbf{Step~2 (Cross term)}: Write $A_{i}=X_{t_{i-1}}^{\varepsilon,\vartheta_{0}}\Delta_{n}+r_{i}$, where $r_{i}:=\int_{t_{i-1}}^{t_{i}}(X_{s}^{\varepsilon,\vartheta_{0}}-X_{t_{i-1}}^{\varepsilon,\vartheta_{0}})ds$. It\^{o}'s isometry and the moment bounds give
\begin{equation}\label{cross-leading-rate}
    \mathbb{E}^{\mathbb{P}}\Big[\Big|\Delta_{n}\sum_{i}X_{t_{i-1}}^{\varepsilon,\vartheta_{0}}\Delta_{i}M\Big|^{2}\Big]=\varepsilon^{2}\nu^{2}\Delta_{n}^{2}\sum_{i}\int_{t_{i-1}}^{t_{i}}\mathbb{E}^{\mathbb{P}}\Big[\big(X_{t_{i-1}}^{\varepsilon,\vartheta_{0}}\big)^{2}\big(X_{s}^{\varepsilon,\vartheta_{0}}\big)^{2\vartheta_{0}}\Big]ds\leq C\varepsilon^{2}h_{n}\Delta_{n}^{2}.
\end{equation}
For the remainder, the SDE, the Cauchy--Schwarz inequality, and It\^{o}'s isometry imply
\[
    \mathbb{E}^{\mathbb{P}}\Big[\big|X_{t}^{\varepsilon,\vartheta_{0}}-X_{s}^{\varepsilon,\vartheta_{0}}\big|^{2}\Big]\leq C\big((t-s)^{2}+\varepsilon^{2}(t-s)\big).
\]
Consequently,
\[
\begin{aligned}
    &\mathbb{E}^{\mathbb{P}}\Big[\sum_{i}r_{i}^{2}\Big]\leq\Delta_{n}\sum_{i}\int_{t_{i-1}}^{t_{i}}\mathbb{E}^{\mathbb{P}}\big|X_{s}^{\varepsilon,\vartheta_{0}}-X_{t_{i-1}}^{\varepsilon,\vartheta_{0}}\big|^{2}ds\leq C\big(h_{n}\Delta_{n}^{3}+\varepsilon^{2}h_{n}\Delta_{n}^{2}\big),\\
   & \mathbb{E}^{\mathbb{P}}\Big[\sum_{i}(\Delta_{i}M)^{2}\Big]=\varepsilon^{2}\nu^{2}\int_{\tau_{n}}^{\tau_{n}'}\mathbb{E}^{\mathbb{P}}\Big[\big(X_{s}^{\varepsilon,\vartheta_{0}}\big)^{2\vartheta_{0}}\Big]ds\leq C\varepsilon^{2}h_{n}.
\end{aligned}
\]
The Cauchy--Schwarz inequality therefore yields
\[
    \sum_{i}r_{i}\Delta_{i}M=O_{\mathbb{P}}\big(\varepsilon h_{n}\Delta_{n}^{3/2}+\varepsilon^{2}h_{n}\Delta_{n}\big)=o_{\mathbb{P}}\big(\varepsilon\Delta_{n}\sqrt{h_{n}}\big).
\]
Together with \eqref{cross-leading-rate}, this gives
\begin{equation}\label{RV-QV-cross-rate}
    \sum_{i}A_{i}\Delta_{i}M=O_{\mathbb{P}}\big(\varepsilon\Delta_{n}\sqrt{h_{n}}\big).
\end{equation}
\textbf{Step~3 (Martingale-square term)}: It\^{o}'s formula gives
\[
    (\Delta_{i}M)^{2}-\Delta_{i}\langle M\rangle=2\int_{t_{i-1}}^{t_{i}}(M_{s}-M_{t_{i-1}})dM_{s}.
\]
For $t_{i-1}\leq s\leq t_{i}$, the BDG inequality and the moment bounds yield $\mathbb{E}^{\mathbb{P}}[|M_{s}-M_{t_{i-1}}|^{4}]\leq C\varepsilon^{4}(s-t_{i-1})^{2}$. Hence, It\^{o}'s isometry and the Cauchy--Schwarz inequality imply
\[
\begin{aligned}
    \mathbb{E}^{\mathbb{P}}\Big[\Big|\sum_{i}\big[(\Delta_{i}M)^{2}-\Delta_{i}\langle M\rangle\big]\Big|^{2}]&=4\varepsilon^{2}\nu^{2}\sum_{i}\int_{t_{i-1}}^{t_{i}}\mathbb{E}^{\mathbb{P}}\Big[(M_{s}-M_{t_{i-1}})^{2}\big(X_{s}^{\varepsilon,\vartheta_{0}}\big)^{2\vartheta_{0}}\Big]ds\\
    &\leq C\varepsilon^{4}\sum_{i}\int_{t_{i-1}}^{t_{i}}(s-t_{i-1})ds\leq C\varepsilon^{4}h_{n}\Delta_{n}.
\end{aligned}
\]
Thus,
\begin{equation}\label{RV-QV-martingale-square-rate}
    \sum_{i}\big[(\Delta_{i}M)^{2}-\Delta_{i}\langle M\rangle\big]=O_{\mathbb{P}}\big(\varepsilon^{2}\sqrt{h_{n}\Delta_{n}}\big).
\end{equation}
Substituting \eqref{RV-QV-drift-square-rate}, \eqref{RV-QV-cross-rate}, and \eqref{RV-QV-martingale-square-rate} into \eqref{RV-QV-decomposition} and dividing by $\varepsilon^{2}\nu^{2}h_{n}$ proves \eqref{local-RV-QV-full-rate}.
\end{proof}

\begin{theorem}[Rate of the feasible preliminary estimator]
\label{thm:preliminary-estimator-rate}
Under Assumptions~\ref{ass:A1},~\ref{ass:A2},~\ref{ass:A4},
and~\ref{ass:A5},
\[
    \mathbb{P}(\mathcal{E}_{\varepsilon,n})\to1 \quad\text{and}\quad\widetilde{\vartheta}_{\varepsilon,n}-\vartheta_{0}=O_{\mathbb{P}}\big(\varepsilon \sqrt{h_{n}}\big)+O_{\mathbb{P}}\Big(\sqrt{\frac{\Delta_{n}}{h_{n}}}+\frac{\Delta_{n}}{\varepsilon\sqrt{h_{n}}}+\frac{\Delta_{n}}{\varepsilon^{2}}\Big).
\]
In particular, Assumption~\ref{ass:A5} reduces this rate to $\widetilde{\vartheta}_{\varepsilon,n}-\vartheta_{0}=o_{\mathbb{P}}(\varepsilon)$.
\end{theorem}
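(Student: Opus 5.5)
We prove the result by treating the preliminary estimator as the solution of an essentially linear equation in $\vartheta$, perturbed by the two error sources already quantified in Lemmas~\ref{lem:CEV-holder-continuity} and~\ref{lem:local-RV-QV-approximation}. Write $x_{n}:=X_{\tau_{n}}^{\varepsilon,\vartheta_{0}}$, $L_{n}:=\log x_{n}$, and
\[
    \widehat{G}_{n}(\vartheta):=\log\frac{\widehat{\mathcal{V}}_{\varepsilon,n}}{\varepsilon^{2}\nu^{2}h_{n}}-2\vartheta L_{n}-\log\Phi_{n}(\vartheta).
\]

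\textbf{Step 1: $\mathbb{P}(\mathcal{E}_{\varepsilon,n})\to1$.} By Lemma~\ref{lem:uniform-deterministic-limit}, $x_{n}-\mathcal{X}_{\tau_{n}}=O_{\mathbb{P}}(\varepsilon)$. Since $\tau_{n}\to\tau$ and $\mathcal{X}$ is continuous, $x_{n}\to\mathcal{X}_{\tau}$ in probability. By Assumption~\ref{ass:A4}, $|\log\mathcal{X}_{\tau}|\geq c_{0}>c_{0}/2$. Hence, with probability tending to one, $x_{n}$ lies in the admissible set and satisfies $|L_{n}|\geq c_{0}/2$.

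Next, Lemmas~\ref{lem:CEV-holder-continuity} and~\ref{lem:local-RV-QV-approximation}, together with $\Phi_{n}(\vartheta_{0})\to1$, give
\[
    \frac{\widehat{\mathcal{V}}_{\varepsilon,n}}{\varepsilon^{2}\nu^{2}h_{n}}=x_{n}^{2\vartheta_{0}}\Phi_{n}(\vartheta_{0})+\rho_{n},
\]
where $\rho_{n}=O_{\mathbb{P}}(r_{n})$ with $r_{n}:=\varepsilon\sqrt{h_{n}}+\sqrt{\Delta_{n}/h_{n}}+\Delta_{n}/(\varepsilon\sqrt{h_{n}})+\Delta_{n}/\varepsilon^{2}$. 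Under Assumption~\ref{ass:A5} we have $r_{n}\to0$; note that $\Delta_{n}/(\varepsilon\sqrt{h_{n}})\leq\sqrt{\Delta_{n}/h_{n}}\cdot\sqrt{\Delta_{n}}/\varepsilon$ and $\Delta_{n}/\varepsilon^{2}=o(\varepsilon)$. Meanwhile, $x_{n}^{2\vartheta_{0}}$ is bounded away from $0$ in probability, so $\widehat{\mathcal{V}}_{\varepsilon,n}>0$ with probability tending to one. Finally, because the function $y\mapsto\log y$ is Lipschitz near $\mathcal{X}_{\tau}^{2\vartheta_{0}}>0$,
\[
    \log\frac{\widehat{\mathcal{V}}_{\varepsilon,n}}{\varepsilon^{2}\nu^{2}h_{n}}=2\vartheta_{0}L_{n}+\log\Phi_{n}(\vartheta_{0})+O_{\mathbb{P}}(r_{n}).
\]

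\textbf{Step 2: Linearization of the criterion.} By Step 1,
\[
    \widehat{G}_{n}(\vartheta)=-2(\vartheta-\vartheta_{0})L_{n}-\big[\log\Phi_{n}(\vartheta)-\log\Phi_{n}(\vartheta_{0})\big]+\kappa_{n},\qquad\kappa_{n}=O_{\mathbb{P}}(r_{n}).
\]
An explicit computation from $\Phi_{n}(\vartheta)=h_{n}^{-1}\int_{0}^{h_{n}}e^{2\mu\vartheta u}du$ yields $\sup_{\vartheta\in\Theta}|\partial_{\vartheta}\log\Phi_{n}(\vartheta)|\leq C h_{n}\to0$. Consequently, $\widehat{G}_{n}$ is continuous and strictly monotone on $\Theta$, with $|\partial_{\vartheta}\widehat{G}_{n}|\geq2|L_{n}|-Ch_{n}\geq c_{0}/2$ with probability tending to one.

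At the endpoints, $|\widehat{G}_{n}(\underline{\vartheta})|$ and $|\widehat{G}_{n}(\overline{\vartheta})|$ are bounded below by roughly $c_{0}(\vartheta_{0}-\underline{\vartheta})$ and $c_{0}(\overline{\vartheta}-\vartheta_{0})$ respectively, minus $o_{\mathbb{P}}(1)$, and the two endpoint values have opposite signs. Since $\vartheta_{0}$ lies in the interior of $\Theta$ (Assumption~\ref{ass:A2}), the intermediate value theorem gives, with probability tending to one, a unique zero of $\widehat{G}_{n}$ in the interior. This zero is the unique minimizer of $\widehat{G}_{n}^{2}$, so it equals $\widetilde{\vartheta}_{\varepsilon,n}^{*}$, and the tie-breaking rule is irrelevant.

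\textbf{Step 3: Rate.} By the mean value theorem, $0=\widehat{G}_{n}(\widetilde{\vartheta}_{\varepsilon,n})=\widehat{G}_{n}(\vartheta_{0})+\partial_{\vartheta}\widehat{G}_{n}(\bar\vartheta)(\widetilde{\vartheta}_{\varepsilon,n}-\vartheta_{0})$ for some intermediate $\bar\vartheta$. Since $\widehat{G}_{n}(\vartheta_{0})=\kappa_{n}$ and $|\partial_{\vartheta}\widehat{G}_{n}|\geq c_{0}/2$, we obtain $|\widetilde{\vartheta}_{\varepsilon,n}-\vartheta_{0}|\leq 2|\kappa_{n}|/c_{0}=O_{\mathbb{P}}(r_{n})$ on an event of probability tending to one. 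On the complementary event, the difference is bounded by the diameter of $\Theta$, so it does not affect the $O_{\mathbb{P}}$ statement; this establishes the stated rate.

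For the final claim, each term of $r_{n}$ is $o(\varepsilon)$ under Assumptions~\ref{ass:A4} and~\ref{ass:A5}. Indeed, $\varepsilon\sqrt{h_{n}}=o(\varepsilon)$ since $h_{n}\to0$; $\sqrt{\Delta_{n}/h_{n}}=o(\varepsilon)$ by assumption; $\Delta_{n}/(\varepsilon\sqrt{h_{n}})=\sqrt{\Delta_{n}/h_{n}}\cdot\sqrt{\Delta_{n}}/\varepsilon=o(\varepsilon)\cdot o(\varepsilon^{1/2})$; and $\Delta_{n}/\varepsilon^{2}=o(\varepsilon)$ because $\Delta_{n}=o(\varepsilon^{3})$.

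The main obstacle is the uniform control in Step 2: we need $\log\Phi_{n}$ to be uniformly flat on $\Theta$, and we need the random slope $2L_{n}$ to be bounded away from zero, which is exactly where the identification condition $|\log\mathcal{X}_{\tau}|\geq c_{0}$ enters. A secondary point is that the preliminary estimator is built from the CEV process itself, which may hit $0$. This is handled because the localization in Step 1 takes place near $\mathcal{X}_{\tau}>0$.
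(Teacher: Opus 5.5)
Your proof is correct and follows the paper's route: you use the same combination of Lemmas~\ref{lem:CEV-holder-continuity} and~\ref{lem:local-RV-QV-approximation}, the same Lipschitz step for the logarithm, and the same slope bound $c_{0}/2$ obtained from $|\log X^{\varepsilon,\vartheta_{0}}_{\tau_{n}}|\geq c_{0}/2$ and $\partial_{\vartheta}\log\Phi_{n}=O(h_{n})$. The only difference is at the end: you use interiority of $\vartheta_{0}$ and the intermediate value theorem to show the minimizer is an exact interior root, whereas the paper avoids this by using the minimizing property $|g_{\varepsilon,n}(\widetilde{\vartheta}^{*}_{\varepsilon,n})|\leq|g_{\varepsilon,n}(\vartheta_{0})|$, which gives $|\widetilde{\vartheta}^{*}_{\varepsilon,n}-\vartheta_{0}|\leq(4/c_{0})|g_{\varepsilon,n}(\vartheta_{0})|$ without needing a root to exist.
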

\begin{proof}
The normalized realized-variation error admits the decomposition
\[
    \frac{\widehat{\mathcal{V}}_{\varepsilon,n}}{\varepsilon^{2}\nu^{2}h_{n}}-\big(X_{\tau_{n}}^{\varepsilon,\vartheta_{0}}\big)^{2\vartheta_{0}}\Phi_{n}(\vartheta_{0})=\frac{\widehat{\mathcal{V}}_{\varepsilon,n}-\big\{[X^{\varepsilon,\vartheta_{0}}]_{\tau'_{n}}-[X^{\varepsilon,\vartheta_{0}}]_{\tau_{n}}\big\}}{\varepsilon^{2}\nu^{2}h_{n}
    }+\Big\{\frac{1}{h_{n}}\int_{\tau_{n}}^{\tau'_{n}}\big(X_{s}^{\varepsilon,\vartheta_{0}}\big)^{2\vartheta_{0}}ds-\big(X_{\tau_{n}}^{\varepsilon,\vartheta_{0}}\big)^{2\vartheta_{0}}\Phi_{n}(\vartheta_{0})\Big\}.
\]
Set $U_{\varepsilon,n}:=\widehat{\mathcal V}_{\varepsilon,n}/(\varepsilon^{2}\nu^{2}h_{n})$, $r_{\varepsilon,n}:=\varepsilon\sqrt{h_{n}}+\sqrt{\Delta_{n}/h_{n}}+\Delta_{n}/(\varepsilon\sqrt{h_{n}})+\Delta_{n}/\varepsilon^{2}$. Combining Lemmas~\ref{lem:CEV-holder-continuity},~\ref{lem:local-RV-QV-approximation} gives
\begin{equation}\label{normalized-rv-preliminary-expansion}
    U_{\varepsilon,n}-\big(X_{\tau_{n}}^{\varepsilon,\vartheta_{0}}\big)^{2\vartheta_{0}}\Phi_{n}(\vartheta_{0})=O_{\mathbb{P}}(r_{\varepsilon,n}).
\end{equation}
Assumption~\ref{ass:A4} controls approximation and local identification: $h_{n}\to0$ controls the stochastic within-window error, while $m_{n}\to\infty$ ensures increasingly many increments within the shrinking window. The condition $|\log\mathcal{X}_{\tau}|\geq c_{0}$ ensures nondegenerate local identification, since the limiting derivative of the corrected identification map is $2\log\mathcal{X}_{\tau}$, which vanishes if $\mathcal{X}_{\tau}=1$.

By Lemma~\ref{lem:uniform-deterministic-limit} and $\Phi_{n}(\vartheta_{0})\to1$, $\big(X_{\tau_{n}}^{\varepsilon,\vartheta_{0}}\big)^{2\vartheta_{0}}\Phi_{n}(\vartheta_{0})$ converges in probability to $\mathcal{X}_{\tau}^{2\vartheta_{0}}>0$. Since $r_{\varepsilon,n}\to0$ under Assumptions~\ref{ass:A4} and~\ref{ass:A5}, $U_{\varepsilon,n}$ has the same limit. Together with $|\log\mathcal{X}_{\tau}|\geq c_{0}$, this implies $\mathbb{P}(\mathcal{E}_{\varepsilon,n})\to1$.

On $\mathcal{E}_{\varepsilon,n}$, define the logarithmic residual
\[
    g_{\varepsilon,n}(\vartheta):=\log U_{\varepsilon,n}-2\vartheta\log X_{\tau_{n}}^{\varepsilon,\vartheta_{0}}-\log\Phi_{n}(\vartheta),
\]
and extend it by $0$ outside this event. The two quantities compared in \eqref{normalized-rv-preliminary-expansion} are bounded away from $0$ with probability tending to $1$. The mean value theorem for the logarithm therefore gives
\begin{equation}\label{corrected-log-preliminary-expansion}
    g_{\varepsilon,n}(\vartheta_{0})=O_{\mathbb{P}}(r_{\varepsilon,n}).
\end{equation}
By Lemma~\ref{lem:uniform-deterministic-limit} and $|\log\mathcal{X}_{\tau}|\geq c_{0}$, $|\log X_{\tau_{n}}^{\varepsilon,\vartheta_{0}}|\geq c_{0}/2$ with probability tending to $1$. Since $\sup_{\vartheta\in\Theta}|\partial_{\vartheta}\log\Phi_{n}(\vartheta)|=O(h_{n})$, for all sufficiently large $n$, on $\mathcal{E}_{\varepsilon,n}$,
\[
    \inf_{\vartheta\in\Theta}|\partial_{\vartheta}g_{\varepsilon,n}(\vartheta)|=\inf_{\vartheta\in\Theta}\big|2\log X_{\tau_n}^{\varepsilon,\vartheta_{0}}+\partial_\vartheta\log\Phi_n(\vartheta)\big|\geq c_{0}/2.
\]
Moreover, $\vartheta_{0}\in\Theta$ and the minimizing property of $\widetilde{\vartheta}_{\varepsilon,n}^{*}$ imply $|g_{\varepsilon,n}(\widetilde{\vartheta}_{\varepsilon,n}^{*})| \leq|g_{\varepsilon,n}(\vartheta_{0})|$.
Hence, on $\mathcal{E}_{\varepsilon,n}$, the mean value theorem and the triangle inequality yield
\[
    |\widetilde{\vartheta}_{\varepsilon,n}^{*}-\vartheta_{0}|\leq\frac{2}{c_{0}}\big|g_{\varepsilon,n}(\widetilde{\vartheta}_{\varepsilon,n}^{*})-g_{\varepsilon,n}(\vartheta_{0})\big|\leq\frac{4}{c_{0}}|g_{\varepsilon,n}(\vartheta_{0})|.
\]
Since $\widetilde{\vartheta}_{\varepsilon,n}=\widetilde{\vartheta}_{\varepsilon,n}^{*}$
on $\mathcal{E}_{\varepsilon,n}$ and $\mathbb{P}(\mathcal{E}_{\varepsilon,n})\to1$,
\eqref{corrected-log-preliminary-expansion} gives $\widetilde{\vartheta}_{\varepsilon,n}-\vartheta_{0}=O_{\mathbb{P}}(r_{\varepsilon,n})$.

Assumption~\ref{ass:A5} strengthens the basic joint small-noise and high-frequency regime. Together with $h_{n}\to0$ from Assumption~\ref{ass:A4}, it gives
\[
    \varepsilon\sqrt{h_{n}}=o(\varepsilon),\qquad\sqrt{\frac{\Delta_{n}}{h_{n}}}=o(\varepsilon),\qquad\frac{\Delta_{n}}{\varepsilon^2}=o(\varepsilon).
\]
The mixed term also satisfies
\[
    \frac{\Delta_{n}}{\varepsilon\sqrt{h_{n}}}=\sqrt{\frac{\Delta_{n}}{h_{n}}}\frac{\sqrt{\Delta_{n}}}{\varepsilon}=o(\varepsilon)o(1)=o(\varepsilon).
\]
Thus, $r_{\varepsilon,n}=o(\varepsilon)$, and the preceding rate bound yields
$\widetilde{\vartheta}_{\varepsilon,n}-\vartheta_{0}=o_{\mathbb{P}}(\varepsilon)$.
\end{proof}

\subsection{Parameter sensitivity and plug-in equivalence}\label{subsec:B4}
The CEV-based oracle limit established above uses the state mapping $\mathcal{T}$ evaluated at the unknown elasticity $\vartheta_{0}$. Implementing the estimator requires replacing $\vartheta_{0}$ by the preliminary estimator $\widetilde{\vartheta}_{\varepsilon,n}$, which changes the transformed observations and hence both the numerator and denominator of the least-squares estimator. This substitution requires separate justification at the first-order scale: preserving the oracle limit requires the resulting estimation error to be $o_{\mathbb{P}}(\varepsilon)$, rather than merely converging to $0$. We therefore examine the sensitivity of the estimator to the elasticity used in the state mapping. Recalling the path functional in \eqref{path-functional-B}, we express the oracle and feasible estimators as two evaluations of a common parameter-indexed map. For $\vartheta\in\Theta$, define
\[
	Z_{t_{i}}^{X,\varepsilon}(\vartheta):=\mathcal{T}\big(X_{t_{i}}^{\varepsilon,\vartheta_{0}};\vartheta\big)\text{ and }\mathcal{G}_{\varepsilon,n}(\vartheta):=\mathcal{B}_{\varepsilon,n}\big(Z^{X,\varepsilon}(\vartheta)\big)\quad\text{so that}\quad\mathcal{G}_{\varepsilon,n}(\vartheta_{0})=\widehat{\beta}_{\varepsilon,n}^{X,\circ}\text{ and }\mathcal{G}_{\varepsilon,n}(\widetilde{\vartheta}_{\varepsilon,n})=\widehat{\beta}_{\varepsilon,n}.
\]
By the mean value theorem,
\[
    \widehat{\beta}_{\varepsilon,n}-\widehat{\beta}_{\varepsilon,n}^{X,\circ}=\partial_{\vartheta}\mathcal{G}_{\varepsilon,n}(\bar{\vartheta}_{\varepsilon,n})\big(\widetilde{\vartheta}_{\varepsilon,n}-\vartheta_{0}\big)\qquad\text{for some $\bar{\vartheta}_{\varepsilon,n}$ between
$\widetilde{\vartheta}_{\varepsilon,n}$ and $\vartheta_{0}$.}
\]
Theorem~\ref{thm:preliminary-estimator-rate} gives $\widetilde{\vartheta}_{\varepsilon,n}-\vartheta_{0}=o_{\mathbb{P}}(\varepsilon)$, but transferring this rate to the plug-in error requires control of the derivative of $\mathcal{G}_{\varepsilon,n}$, which depends on the observations and varies with both $\varepsilon$ and $n$. A uniform $O_{\mathbb{P}}(1)$ bound ensures that the estimator does not amplify the preliminary estimation error beyond its order. The mean-value representation then yields an $o_{\mathbb{P}}(\varepsilon)$ difference between the feasible and CEV-based oracle estimators, preserving the oracle first-order limit. Because this comparison is pathwise, it does not require independence between the preliminary estimator and the observations used in the second step. The following theorem establishes the required sensitivity bound and the resulting plug-in equivalence.

\begin{theorem}[Parameter sensitivity and plug-in equivalence]
\label{thm:plugin-equivalence}
Suppose that Assumptions~\ref{ass:A1},~\ref{ass:A2},~\ref{ass:A3} hold. Then
\[
    \sup_{\vartheta\in\Theta}\big|\partial_{\vartheta}\mathcal{G}_{\varepsilon,n}(\vartheta)\big|=O_{\mathbb{P}}(1),
\]
If, in addition, Assumptions~\ref{ass:A4} and~\ref{ass:A5} hold, then
\[
    \widehat{\beta}_{\varepsilon,n}-\widehat{\beta}_{\varepsilon,n}^{X,\circ}=o_{\mathbb{P}}(\varepsilon).
\]
\end{theorem}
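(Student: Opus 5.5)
We will work with the numerator--denominator representation $\mathcal{G}_{\varepsilon,n}(\vartheta)=\mathfrak{N}_{\varepsilon,n}(Z^{X,\varepsilon}(\vartheta))/\mathfrak{D}_{n}(Z^{X,\varepsilon}(\vartheta))$ from the proof of Theorem~\ref{thm:auxiliary-cev-oracle-equivalence}. We write $c(\vartheta):=\zeta^{2}/[4(1-\vartheta)^{2}]$ and $p(\vartheta):=2(1-\vartheta)$, so that $Z_{t_{i}}^{X,\varepsilon}(\vartheta)=c(\vartheta)(X_{t_{i}}^{\varepsilon,\vartheta_{0}})^{p(\vartheta)}$. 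For $x>0$, differentiation in $\vartheta$ gives
\[
\partial_{\vartheta}\mathcal{T}(x;\vartheta)=\Big[\frac{c'(\vartheta)}{c(\vartheta)}-2\log x\Big]\mathcal{T}(x;\vartheta),
\]
and the derivative is $0$ at $x=0$. Since $\Theta\Subset(1/2,1)$, we have $p(\vartheta)\ge 2(1-\overline{\vartheta})>0$, so $x^{p}|\log x|$ is bounded near $0$ uniformly in $\vartheta$. Hence $\vartheta\mapsto Z^{X,\varepsilon}_{t_i}(\vartheta)$ is $C^{1}$ on $\Theta$ pathwise, including after absorption. The denominator satisfies $\mathfrak{D}_{n}\ge\Delta_{n}\mathcal{T}(x_{0};\vartheta)^{2}>0$, so $\mathcal{G}_{\varepsilon,n}$ is $C^{1}$ and the mean value representation is legitimate. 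We then have
\[
\partial_{\vartheta}\mathcal{G}_{\varepsilon,n}=\frac{\partial_{\vartheta}\mathfrak{N}_{\varepsilon,n}}{\mathfrak{D}_{n}}-\mathcal{G}_{\varepsilon,n}\frac{\partial_{\vartheta}\mathfrak{D}_{n}}{\mathfrak{D}_{n}},
\]
and it suffices to show three things: $\inf_{\Theta}\mathfrak{D}_{n}$ is bounded below in probability, and $\sup_{\Theta}|\mathfrak{N}_{\varepsilon,n}|$ and $\sup_{\Theta}|\partial_{\vartheta}\mathfrak{N}_{\varepsilon,n}|+\sup_{\Theta}|\partial_{\vartheta}\mathfrak{D}_{n}|$ are $O_{\mathbb{P}}(1)$.

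We localize on $\Omega_{\varepsilon}:=\{\delta\le X_{t}^{\varepsilon,\vartheta_{0}}\le M,\ t\le T\}$ with $\delta<x_{0}e^{\mu T}$ and $M>x_{0}$. Lemma~\ref{lem:uniform-deterministic-limit} gives $\mathbb{P}(\Omega_{\varepsilon})\to1$, so statements may be proved on $\Omega_{\varepsilon}$. On this event, for $x\in[\delta,M]$ and $\vartheta\in\Theta$, the functions $\mathcal{T}$, $\partial_{\vartheta}\mathcal{T}$, $\partial_{x}\mathcal{T}$ and $\partial_{x}\partial_{\vartheta}\mathcal{T}$ are bounded by a constant $C$ independent of $\varepsilon$ and $n$. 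Three bounds follow.
\begin{itemize}
\item The denominator satisfies $\mathfrak{D}_{n}(\vartheta)\ge T\inf_{[\delta,M]\times\Theta}\mathcal{T}^{2}>0$.
\item The derivative $|\partial_{\vartheta}\mathfrak{D}_{n}|=|2\Delta_{n}\sum_{i} Z_{t_{i-1}}\partial_{\vartheta}Z_{t_{i-1}}|$ is at most $2TC^{2}$.
\item The drift part $\alpha_{\varepsilon}\Delta_{n}\sum_{i}Z_{t_{i-1}}$ of $\mathfrak{N}_{\varepsilon,n}$, and its $\vartheta$-derivative, are $O(\varepsilon^{2})$.
\end{itemize}

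The main obstacle is the stochastic-sum part $S(\vartheta):=\sum_{i}Z_{t_{i-1}}(\vartheta)\Delta_{i}Z(\vartheta)$ of $\mathfrak{N}_{\varepsilon,n}$ and its derivative. Bounding $|\Delta_{i}Z|$ termwise would lose a factor $n^{1/2}$. Instead we use the summation-by-parts identity \eqref{discrete-integration-by-parts-square}, $S=\frac12[Z_{T}^{2}-Z_{0}^{2}-\sum_{i}(\Delta_{i}Z)^{2}]$. On $\Omega_{\varepsilon}$ the endpoint terms and their $\vartheta$-derivatives are bounded by $C$. By the Lipschitz bounds, the realized quadratic variation terms satisfy
\[
\sup_{\Theta}\sum_{i}(\Delta_{i}Z)^{2}\le C\sum_{i}(\Delta_{i}X)^{2},\qquad
\sup_{\Theta}\Big|\partial_{\vartheta}\sum_{i}(\Delta_{i}Z)^{2}\Big|\le 2\Big[\sum_{i}(\Delta_{i}Z)^{2}\Big]^{1/2}\Big[\sum_{i}(\Delta_{i}\partial_{\vartheta}Z)^{2}\Big]^{1/2}\le C\sum_{i}(\Delta_{i}X)^{2}.
\]
Here $\partial_{\vartheta}\mathcal{T}(\cdot;\vartheta)$ is Lipschitz on $[\delta,M]$ uniformly in $\vartheta$. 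From the proof of Theorem~\ref{thm:auxiliary-cev-oracle-equivalence}, $\sum_{i}(\Delta_{i}X)^{2}=O_{\mathbb{P}}(\varepsilon^{2}+\Delta_{n})=o_{\mathbb{P}}(1)$. Hence $\sup_{\Theta}|\mathfrak{N}_{\varepsilon,n}|+\sup_{\Theta}|\partial_{\vartheta}\mathfrak{N}_{\varepsilon,n}|=O_{\mathbb{P}}(1)$. Combined with the denominator bounds, this gives $\sup_{\Theta}|\partial_{\vartheta}\mathcal{G}_{\varepsilon,n}|=O_{\mathbb{P}}(1)$ under Assumptions~\ref{ass:A1}--\ref{ass:A3} only.

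For the second assertion, the mean value theorem gives
\[
|\widehat{\beta}_{\varepsilon,n}-\widehat{\beta}_{\varepsilon,n}^{X,\circ}|\le\sup_{\Theta}|\partial_{\vartheta}\mathcal{G}_{\varepsilon,n}|\,|\widetilde{\vartheta}_{\varepsilon,n}-\vartheta_{0}|,
\]
which uses that $\widetilde{\vartheta}_{\varepsilon,n}\in\Theta$ and that $\Theta$ is convex. Theorem~\ref{thm:preliminary-estimator-rate} gives $\widetilde{\vartheta}_{\varepsilon,n}-\vartheta_{0}=o_{\mathbb{P}}(\varepsilon)$ under Assumptions~\ref{ass:A4} and~\ref{ass:A5}, so the difference is $O_{\mathbb{P}}(1)\,o_{\mathbb{P}}(\varepsilon)=o_{\mathbb{P}}(\varepsilon)$. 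The argument is pathwise, so the dependence between $\widetilde{\vartheta}_{\varepsilon,n}$ and the data causes no difficulty.
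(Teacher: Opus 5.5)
Your proposal is correct and follows essentially the same route as the paper: localize $X^{\varepsilon,\vartheta_{0}}$ in a compact subset of $(0,\infty)$, bound $\mathcal{T}$, $\partial_{x}\mathcal{T}$, $\partial_{\vartheta}\mathcal{T}$ and $\partial_{x\vartheta}\mathcal{T}$ uniformly, use the summation-by-parts identity with the realized-variation bound $O_{\mathbb{P}}(\varepsilon^{2}+\Delta_{n})$ to control the numerator and its $\vartheta$-derivative, then apply the quotient rule and the mean value theorem. Your extra check that $\vartheta\mapsto\mathcal{T}(x;\vartheta)$ is differentiable also at $x=0$ is a small but valid refinement of the paper's localization remark: it makes the mean-value step hold pathwise on the whole sample space, not only on the high-probability event.
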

\begin{proof}
\textbf{Step~1 (Localization and uniform bounds)}: By Lemma~\ref{lem:uniform-deterministic-limit}, there exists a compact interval $K\Subset(0,\infty)$ such that $X^{\varepsilon,\vartheta_{0}}$ remains in $K$ on $[0,T]$ with probability tending to $1$. Denote this localization event by $\Omega_{K}^{\varepsilon}:=\{X_t^{\varepsilon,\vartheta_0}\in K \text{ for all }t\in[0,T]\}$, so that $\mathbb{P}(\Omega_{K}^{\varepsilon})\to1$. On $K\times\Theta$, the state mapping and its required derivatives are continuous, so
\begin{equation}\label{transform-uniform-derivative-bound}
    \sup_{(x,\vartheta)\in K\times\Theta}\big[|\mathcal{T}(x;\vartheta)|+|\partial_x\mathcal{T}(x;\vartheta)|+|\partial_{\vartheta}\mathcal{T}(x;\vartheta)|+|\partial_{x\vartheta}\mathcal{T}(x;\vartheta)|\big]<\infty.
\end{equation}
For $Z_{t_{i}}^{X,\varepsilon}(\vartheta)=\mathcal{T}(X_{t_{i}}^{\varepsilon,\vartheta_{0}};\vartheta)$, write $\dot{Z}_{t_{i}}^{X,\varepsilon}(\vartheta):=\partial_{\vartheta}\mathcal{T}(X_{t_{i}}^{\varepsilon,\vartheta_{0}};\vartheta)$. The preceding bound gives
\begin{equation}\label{uniform-Z-Zdot-bound}
    \sup_{\vartheta\in\Theta}\max_{0\leq i\leq n}\big(|Z_{t_{i}}^{X,\varepsilon}(\vartheta)|+|\dot{Z}_{t_{i}}^{X,\varepsilon}(\vartheta)|\big)=O_{\mathbb{P}}(1).
\end{equation}
Moreover, the SDE, the Cauchy--Schwarz inequality, It\^{o}'s isometry, and
\eqref{X-Y-uniform-second-moment} yield
\[
    \mathbb{E}^{\mathbb{P}}\Big[\sum_{i=1}^{n}|\Delta_{i}X^{\varepsilon,\vartheta_{0}}|^{2}\Big]\leq C(\varepsilon^{2}+\Delta_{n}).
\]
On $\Omega_{K}^{\varepsilon}$, the mean value theorem and \eqref{transform-uniform-derivative-bound} give
\[
    |\Delta_{i}Z^{X,\varepsilon}(\vartheta)|+|\Delta_{i}\dot{Z}^{X,\varepsilon}(\vartheta)|\leq C|\Delta_{i}X^{\varepsilon,\vartheta_0}|,\qquad\vartheta\in\Theta,
\]
with $C$ independent of $\vartheta$, $\varepsilon$, and $n$. Squaring and summing, the preceding moment estimate, Markov's inequality, and $\mathbb{P}(\Omega_{K}^{\varepsilon})\to1$ imply
\begin{equation}\label{uniform-Z-increment-bound}
    \sup_{\vartheta\in\Theta}\sum_{i=1}^{n}\big(|\Delta_{i}Z^{X,\varepsilon}(\vartheta)|^{2}+|\Delta_{i}\dot{Z}^{X,\varepsilon}(\vartheta)|^{2}\big)=O_{\mathbb{P}}(\varepsilon^{2}+\Delta_{n}).
\end{equation}
\textbf{Step~2 (Denominator and numerator)}: Define
\[
    \mathscr{D}_{\varepsilon,n}(\vartheta):=\Delta_{n}\sum_{i=1}^{n}\big(Z_{t_{i-1}}^{X,\varepsilon}(\vartheta)\big)^{2},\qquad\partial_{\vartheta}\mathscr{D}_{\varepsilon,n}(\vartheta)=2\Delta_{n}\sum_{i=1}^{n}Z_{t_{i-1}}^{X,\varepsilon}(\vartheta)\dot{Z}_{t_{i-1}}^{X,\varepsilon}(\vartheta).
\]
Since $n\Delta_{n}=T$, \eqref{uniform-Z-Zdot-bound} implies $\sup_{\vartheta\in\Theta}|\partial_{\vartheta}\mathscr{D}_{\varepsilon,n}(\vartheta)|=O_{\mathbb{P}}(1)$. By continuity and strict positivity on $K\times\Theta$, $c_{d}:=T\min_{(x,\vartheta)\in K\times\Theta}\mathcal{T}(x;\vartheta)^{2}>0$. On $\Omega_{K}^{\varepsilon}$, $(Z_{t_{i-1}}^{X,\varepsilon}(\vartheta))^{2}\geq c_{d}/T$ for every $i$ and $\vartheta\in\Theta$. Since $n\Delta_{n}=T$,
\[
    \inf_{\vartheta\in\Theta}\mathscr{D}_{\varepsilon,n}(\vartheta)\geq\Delta_{n}n\cdot c_{d}/T=c_{d}.
\]
Consequently, $\mathbb{P}(\inf_{\vartheta\in\Theta}\mathscr{D}_{\varepsilon,n}(\vartheta)\geq c_{d})\to1$. By \eqref{discrete-integration-by-parts-square}, the numerator of $\mathcal{G}_{\varepsilon,n}$ and its derivative can be written as 
\[
\begin{aligned}
    \mathscr{N}_{\varepsilon,n}(\vartheta)&=\alpha_{\varepsilon}\Delta_{n}\sum_{i=1}^{n}Z_{t_{i-1}}^{X,\varepsilon}(\vartheta)-\frac{1}{2}\Big[\big(Z_{T}^{X,\varepsilon}(\vartheta)\big)^{2}-\big(Z_{0}^{X,\varepsilon}(\vartheta)\big)^{2}-\sum_{i=1}^{n}\big(\Delta_{i}Z^{X,\varepsilon}(\vartheta)\big)^{2}\Big],\\
    \partial_{\vartheta}\mathscr{N}_{\varepsilon,n}(\vartheta)&=\alpha_{\varepsilon}\Delta_{n}\sum_{i=1}^{n}\dot{Z}_{t_{i-1}}^{X,\varepsilon}(\vartheta)-Z_{T}^{X,\varepsilon}(\vartheta)\dot{Z}_{T}^{X,\varepsilon}(\vartheta)+Z_{0}^{X,\varepsilon}(\vartheta)\dot{Z}_{0}^{X,\varepsilon}(\vartheta)+\sum_{i=1}^{n}\Delta_{i}Z^{X,\varepsilon}(\vartheta)\Delta_{i}\dot{Z}^{X,\varepsilon}(\vartheta).
\end{aligned}
\]
Using $2|xy|\leq x^{2}+y^{2}$ and \eqref{uniform-Z-increment-bound}, we obtain
\[
    \sup_{\vartheta\in\Theta}\Big|\sum_{i=1}^{n}\Delta_{i}Z^{X,\varepsilon}(\vartheta)\Delta_{i}\dot{Z}^{X,\varepsilon}(\vartheta)\Big|\leq\frac{1}{2}\sup_{\vartheta\in\Theta}\sum_{i=1}^{n}\big(|\Delta_{i}Z^{X,\varepsilon}(\vartheta)|^{2}+|\Delta_{i}\dot{Z}^{X,\varepsilon}(\vartheta)|^{2}\big)=O_{\mathbb{P}}(\varepsilon^{2}+\Delta_{n}).
\]
By \eqref{uniform-Z-Zdot-bound}, $n\Delta_{n}=T$, and $\alpha_{\varepsilon}=O(\varepsilon^{2})$, the first terms in $\mathscr{N}_{\varepsilon,n}(\vartheta)$ and $\partial_{\vartheta}\mathscr{N}_{\varepsilon,n}(\vartheta)$ are uniformly $O_{\mathbb P}(\varepsilon^2)$, while the endpoint terms are uniformly $O_{\mathbb P}(1)$. The squared-increment sum is controlled by \eqref{uniform-Z-increment-bound}. Thus,
\[
    \sup_{\vartheta\in\Theta}|\partial_\vartheta\mathscr{N}_{\varepsilon,n}(\vartheta)|=O_{\mathbb{P}}(1),\qquad\sup_{\vartheta\in\Theta}|\mathscr{N}_{\varepsilon,n}(\vartheta)|=O_{\mathbb{P}}(1).
\]
\textbf{Step~3 (Parameter sensitivity and plug-in equivalence)}: Since $\mathcal{G}_{\varepsilon,n}=\mathscr{N}_{\varepsilon,n}/\mathscr{D}_{\varepsilon,n}$, the quotient rule gives
\[
    \partial_{\vartheta}\mathcal{G}_{\varepsilon,n}(\vartheta)=\frac{\partial_{\vartheta}\mathscr{N}_{\varepsilon,n}(\vartheta)}{\mathscr{D}_{\varepsilon,n}(\vartheta)}-\frac{\mathscr{N}_{\varepsilon,n}(\vartheta)\partial_{\vartheta}\mathscr{D}_{\varepsilon,n}(\vartheta)}{\mathscr{D}_{\varepsilon,n}(\vartheta)^{2}}.
\]
The bounds in \textbf{Step~2} imply $\sup_{\vartheta\in\Theta}|\partial_{\vartheta}\mathcal{G}_{\varepsilon,n}(\vartheta)|=O_{\mathbb{P}}(1)$.

Under Assumptions~\ref{ass:A4} and~\ref{ass:A5}, Theorem~\ref{thm:preliminary-estimator-rate} gives $\widetilde\vartheta_{\varepsilon,n}-\vartheta_{0}=o_{\mathbb{P}}(\varepsilon)$. Since $\Theta$ is an interval containing both $\widetilde\vartheta_{\varepsilon,n}$ and $\vartheta_{0}$, the mean value theorem yields
\[
    \big|\widehat\beta_{\varepsilon,n}-\widehat\beta_{\varepsilon,n}^{X,\circ}\big|\leq\sup_{\vartheta\in\Theta}|\partial_{\vartheta}\mathcal{G}_{\varepsilon,n}(\vartheta)||\widetilde\vartheta_{\varepsilon,n}-\vartheta_{0}|=o_{\mathbb{P}}(\varepsilon).\qedhere
\]
\end{proof}
\begin{remark}[Differentiability under localization]
The parameter-sensitivity analysis is local in the state variable. Although $\mathcal{T}(x;\vartheta)$ is smooth in $\vartheta$ for $x>0$, the increment bounds established above require the mixed derivative $\partial_{x\vartheta}\mathcal{T}(x;\vartheta)$, which is singular as $x\downarrow0$. We therefore work on localization sets bounded away from $0$. On the other hand, the deterministic limit
$\mathcal{X}_{t}=x_{0}e^{\mu t}$ satisfies
$\inf_{0\leq t\leq T}\mathcal{X}_{t}>0$. By
Lemma~\ref{lem:uniform-deterministic-limit}, there exists a compact
interval $K\Subset(0,\infty)$ such that $\mathbb{P}\big(X_{t}^{\varepsilon,\vartheta_{0}}\in K \text{ for all }t\in[0,T]\big)\to1$. Since $\Theta\Subset(1/2,1)$, all derivatives of $\mathcal{T}$ required in the sensitivity analysis are uniformly bounded on $K\times\Theta$. Hence, on these high-probability localization sets, $\mathcal{G}_{\varepsilon,n}(\vartheta)$ is differentiable in $\vartheta$, and the mean-value argument comparing $\mathcal{G}_{\varepsilon,n}(\widetilde{\vartheta}_{\varepsilon,n})$ and $\mathcal{G}_{\varepsilon,n}(\vartheta_{0})$ is justified.
\end{remark}

\end{appendix}

\begingroup
\small
\setlength{\baselineskip}{\baselineskip}
\bibliographystyle{abbrv}
\bibliography{references}

@article{ait2002maximum,
  title={Maximum likelihood estimation of discretely sampled diffusions: a closed-form approximation approach},
  author={A{\"{i}}t-Sahalia, Yacine},
  journal={Econometrica},
  volume={70},
  number={1},
  pages={223--262},
  year={2002},
  publisher={Wiley Online Library}
}

@article{alaya2013asymptotic,
  title={Asymptotic behavior of the maximum likelihood estimator for ergodic and nonergodic square-root diffusions},
  author={Ben Alaya, Mohamed and Kebaier, Ahmed},
  journal={Stoch. Anal. Appl.},
  volume={31},
  number={4},
  pages={552--573},
  year={2013},
  publisher={Taylor \& Francis}
}

@article{andersen2007moment,
  title={Moment explosions in stochastic volatility models},
  author={Andersen, Leif B. G. and Piterbarg, Vladimir V.},
  journal={Finance Stoch.},
  volume={11},
  number={1},
  pages={29--50},
  year={2007},
  publisher={Springer}
}

@article{beckers1980constant,
  title={The constant elasticity of variance model and its implications for option pricing},
  author={Beckers, Stan},
  journal={J. Finance},
  volume={35},
  number={3},
  pages={661--673},
  year={1980},
  publisher={JSTOR}
}

@article{ballestra2011constant,
  title={The constant elasticity of variance model: calibration, test and evidence from the {I}talian equity market},
  author={Ballestra, Luca Vincenzo and Pacelli, Graziella},
  journal={Appl. Financ. Econ.},
  volume={21},
  number={20},
  pages={1479--1487},
  year={2011},
  publisher={Taylor \& Francis}
}

@article{ballestra2016numerical,
  title={A numerical method to estimate the parameters of the {CEV} model implied by {A}merican option prices: Evidence from {NYSE}},
  author={Ballestra, Luca Vincenzo and Cecere, Liliana},
  journal={Chaos, Solitons \& Fractals},
  volume={88},
  pages={100--106},
  year={2016},
  publisher={Elsevier}
}

@article{barndorff2002econometric,
  title={Econometric analysis of realized volatility and its use in estimating stochastic volatility models},
  author={Barndorff-Nielsen, Ole E and Shephard, Neil},
  journal={J. R. Stat. Soc. Ser. B},
  volume={64},
  number={2},
  pages={253--280},
  year={2002},
  publisher={Oxford University Press}
}

@article{barndorff2004power,
  title={Power and bipower variation with stochastic volatility and jumps},
  author={Barndorff-Nielsen, Ole E and Shephard, Neil},
  journal={J. Financ. Econom.},
  volume={2},
  number={1},
  pages={1--37},
  year={2004},
  publisher={Oxford University Press}
}

@article{beiranvand2021new,
  title={A new methodology to estimate constant elasticity of variance},
  author={Beiranvand, Ali and Ivaz, Karim and Beiranvand, Hamzeh},
  year={2021},
  journal={Comput. Methods Differ. Equ.},
  volume={9},
  nombuer={4},
  pages={1059--1068}
}

@article{bibby1995martingale,
  title={Martingale estimation functions for discretely observed diffusion processes},
  author={Bibby, Bo Martin and S{\o}rensen, Michael},
  journal={Bernoulli},
  volume={1(1--2)},
  pages={17--39},
  year={1995},
  publisher={JSTOR}
}

@article{chan1992empirical,
  title={An empirical comparison of alternative models of the short-term interest rate},
  author={Chan, K. Ceajer and Karolyi, George Andrew and Longstaff, Francis A. and Sanders, Anthony B.},
  journal={J. Finance},
  volume={47},
  number={3},
  pages={1209--1227},
  year={1992},
  publisher={Wiley Online Library}
}

@article{cox1985theory,
  title={A theory of the term structure of interest rates},
  author={Cox, John C. and Ingersoll, Jr., Jonathan E. and Ross, Stephen A.},
  journal={Econometrica},
  volume={53},
  number={2},
  pages={385--407},
  year={1985},
  publisher={World Scientific}
}

@article{delbaen2002note,
  title={A note on option pricing for the constant elasticity of variance model},
  author={Delbaen, Freddy and Shirakawa, Hiroshi},
  journal={Asia-Pac. Financ. Mark.},
  volume={9},
  number={2},
  pages={85--99},
  year={2002},
  publisher={Springer}
}

@article{emanuel1982further,
  title={Further results on the constant elasticity of variance call option pricing model},
  author={Emanuel, David C and MacBeth, James D},
  journal={J. Financ. Quant. Anal.},
  volume={17},
  number={4},
  pages={533--554},
  year={1982},
  publisher={Cambridge University Press}
}

@article{florens1989approximate,
  title={Approximate discrete-time schemes for statistics of diffusion processes},
  author={Florens-Zmirou, Danielle},
  journal={Statistics},
  volume={20},
  number={4},
  pages={547--557},
  year={1989},
  publisher={Taylor \& Francis}
}

@article{genon1990maximnm,
  title={Maximum contrast estimation for diffusion processes from discrete observations},
  author={Genon-Catalot, Valentine},
  journal={Statistics},
  volume={21},
  number={1},
  pages={99--116},
  year={1990},
  publisher={Taylor \& Francis}
}

@article{genon1993estimation,
  title={On the estimation of the diffusion coefficient for multi-dimensional diffusion processes},
  author={Genon-Catalot, Valentine and Jacod, Jean},
  journal={Ann. Inst. Henri Poincaré Probab. Stat.},
  volume={29},
  number={1},
  pages={119--151},
  year={1993}
}

@article{genon1994estimation,
  title={Estimation of the diffusion coefficient for diffusion processes: random sampling},
  author={Genon-Catalot, Valentine and Jacod, Jean},
  journal={Scand. J. Stat.},
  volume={21},
  number={3},
  pages={193--221},
  year={1994},
  publisher={JSTOR}
}

@article{gloter2009estimation,
  title={Estimation for stochastic differential equations with a small diffusion coefficient},
  author={Gloter, Arnaud and S{\o}rensen, Michael},
  journal={Stochastic Process. Appl.},
  volume={119},
  number={3},
  pages={679--699},
  year={2009},
  publisher={Elsevier}
}

@article{gontis2011interplay,
  title={Interplay between positive feedbacks in the generalized {CEV} process},
  author={Reimann, St. and Gontis, V. and Alaburda, M.},
  journal={Physica A},
  volume={390},
  number={8},
  pages={1393--1401},
  year={2011},
  publisher={Elsevier}
}

@article{guy2014parametric,
  title={Parametric inference for discretely observed multidimensional diffusions with small diffusion coefficient},
  author={Guy, Romain and Lar{\'e}do, Catherine and Vergu, Elisabeta},
  journal={Stochastic Processes and their Applications},
  volume={124},
  number={1},
  pages={51--80},
  year={2014},
  publisher={Elsevier}
}

@article{jacobsen2001discretely,
  title={Discretely Observed Diffusions: Classes of Estimating Functions and Small {$\Delta$}-optimality},
  author={Jacobsen, Martin},
  journal={Scand. J. Stat.},
  volume={28},
  number={1},
  pages={123--149},
  year={2001},
  publisher={Wiley Online Library}
}

@article{jacobsen2002optimality,
  title={Optimality and small {$\Delta$}-optimality of martingale estimating functions},
  author={Jacobsen, Martin},
  journal={Bernoulli},
  volume={8},
  number={5},
  pages={643--668},
  year={2002}
}

@article{jakobsen2017efficient,
  title={Efficient estimation for diffusions sampled at high frequency over a fixed time interval},
  author={Jakobsen, Nina Munkholt and S{\o}rensen, Michael},
  year={2017},
  journal={Bernoulli},
  volume={23},
  number={3},
  pages={1874--1910}
}

@article{kawai2023adaptive,
  title={Adaptive inference for small diffusion processes based on sampled data},
  author={Kawai, Tetsuya and Uchida, Masayuki},
  journal={Metrika},
  volume={86},
  number={6},
  pages={643--696},
  year={2023},
  publisher={Springer}
}

@article{kessler1999estimating,
  title={Estimating equations based on eigenfunctions for a discretely observed diffusion process},
  author={Kessler, Mathieu and S{\o}rensen, Michael},
  year={1999},
  journal={Bernoulli}, 
  volume={5},
  number={2},
  pages={299--314}
}

@article{laredo1990sufficient,
  title={A sufficient condition for asymptotic sufficiency of incomplete observations of a diffusion process},
  author={Laredo, Catherine F},
  journal={Ann. Stat.},
  volume={18},
  number={3},
  pages={1158--1171},
  year={1990},
  publisher={JSTOR}
}

@article{lindsay2012simulation,
  title={Simulation of the {CEV} process and the local martingale property},
  author={Lindsay, Alan E and Brecher, DR},
  journal={Math. Comput. Simulation},
  volume={82},
  number={5},
  pages={868--878},
  year={2012},
  publisher={Elsevier}
}

@article{long2013least,
  title={Least squares estimators for discretely observed stochastic processes driven by small {L}{\'e}vy noises},
  author={Long, Hongwei and Shimizu, Yasutaka and Sun, Wei},
  journal={J. Multivar. Anal.},
  volume={116},
  pages={422--439},
  year={2013},
  publisher={Elsevier}
}

@article{long2017least,
  title={Least squares estimators for stochastic differential equations driven by small {L}{\'e}vy noises},
  author={Long, Hongwei and Ma, Chunhua and Shimizu, Yasutaka},
  journal={Stochastic Processes and their Applications},
  volume={127},
  number={5},
  pages={1475--1495},
  year={2017},
  publisher={Elsevier}
}

@article{mijatovic2012martingale,
  title={On the martingale property of certain local martingales},
  author={Mijatovi{\'c}, Aleksandar and Urusov, Mikhail},
  journal={Probab. Theory Relat. Fields},
  volume={152},
  number={1},
  pages={1--30},
  year={2012},
  publisher={Springer}
}

@article{mishura2022parameter,
  title={Parameter estimation in {CKLS} model by continuous observations},
  author={Mishura, Yuliya and Ralchenko, Kostiantyn and Dehtiar, Olena},
  journal={Stat. Probab. Lett.},
  volume={184},
  pages={109391},
  year={2022},
  publisher={Elsevier}
}

@article{shimizu2017threshold,
  title={Threshold estimation for stochastic processes with small noise},
  author={Shimizu, Yasutaka},
  journal={Scand. J. Stat.},
  volume={44},
  number={4},
  pages={951--988},
  year={2017},
  publisher={Wiley Online Library}
}

@article{shoji1998estimation,
  title={Estimation for nonlinear stochastic differential equations by a local linearization method},
  author={Shoji, Isao and Ozaki, Tohru},
  journal={Stoch. Anal. Appl.},
  volume={16},
  number={4},
  pages={733--752},
  year={1998},
  publisher={Taylor \& Francis}
}

@article{sorensen2003small,
  title={Small-diffusion asymptotics for discretely sampled stochastic differential equations},
  author={S{\o}rensen, Michael and Uchida, Masayuki},
  journal={Bernoulli},
  volume={9},
  number={6},
  pages={1051--1069},
  year={2003},
  publisher={Bernoulli Society for Mathematical Statistics and Probability}
}

@article{tsumurai2020malliavin,
  title={{M}alliavin Differentiability of {CEV}-Type {H}eston Model},
  author={Tsumurai, Shota},
  journal={J. Math. Finance},
  volume={10},
  number={1},
  pages={173--199},
  year={2020},
  publisher={Scientific Research Publishing}
}

@article{tucker1988tests,
  title={Tests of the {B}lack-{S}choles and constant elasticity of variance currency call option valuation models},
  author={Tucker, Alan L and Peterson, David R and Scott, Elton},
  journal={J. Financ. Res.},
  volume={11},
  number={3},
  pages={201--214},
  year={1988},
  publisher={Wiley Online Library}
}

@article{uchida2013quasi,
  title={Quasi likelihood analysis of volatility and nondegeneracy of statistical random field},
  author={Uchida, Masayuki and Yoshida, Nakahiro},
  journal={Stochastic Process. Appl.},
  volume={123},
  number={7},
  pages={2851--2876},
  year={2013},
  publisher={Elsevier}
}

@article{xiao2017estimating,
  title={Estimating the constant elasticity of variance model with data-driven {M}arkov chain {M}onte {C}arlo methods},
  author={Xiao, Shuang and Li, Guo and Jia, Yunjing},
  journal={Asia-Pac. J. Oper. Res.},
  volume={34},
  number={01},
  pages={1740009},
  year={2017},
  publisher={World Scientific}
}

@article{yoshida1992estimation,
  title={Estimation for diffusion processes from discrete observation},
  author={Yoshida, Nakahiro},
  journal={J. Multivar. Anal.},
  volume={41},
  number={2},
  pages={220--242},
  year={1992},
  publisher={Elsevier}
}

@article{yuen2001estimation,
  title={Estimation in the constant elasticity of variance model},
  author={Yuen, K. C. and Yang, H and Chu, K. L.},
  journal={Br. Actuar. J.},
  volume={7},
  number={2},
  pages={275--292},
  year={2001},
  publisher={Cambridge University Press}
}

@techreport{sorensen2000small,
  author={S{\o}rensen, Michael},
  title={Small dispersion asymptotics for diffusion martingale estimating functions},
  institution={Department of Statistics and Operation Research, University of Copenhagen},
  number={Preprint No. 2000-2},
  address={Copenhagen},
  year={2000}
}

@unpublished{cox1975notes,
  author={Cox, John C.},
  title={Notes on Option Pricing {I}: Constant Elasticity of Variance Diffusions},
  note={Unpublished draft, Stanford University},
  year={1975}
}

@article{ning2025ckls,
  title={From {CKLS} Process to {CIR}-type and {OU}-type Processes: Using a Twice-differentiable Mapping and Generalized {Girsanov}'s Theorem},
  author={Ning, Boyuan and Shimizu, Yasutaka},
  journal={Asia-Pac. Financ. Mark.},
  year={2025},
  publisher={Springer},
  doi={10.1007/s10690-025-09563-1}
}

@book{revuz1999continuous,
  author={Revuz, Daniel and Yor, Marc},
  title={Continuous Martingales and Brownian Motion},
  edition={3rd},
  publisher={Springer},
  year={1999}
}

@book{jeanblanc2009mathematical,
  title={Mathematical methods for financial markets},
  author={Jeanblanc, Monique and Yor, Marc and Chesney, Marc},
  year={2009},
  publisher={Springer London}
}

@book{kutoyants1994identification,
  author={Kutoyants, Yury A.},
  title={Identification of Dynamical Systems with Small Noise},
  year={1994},
  publisher={Kluwer Academic Publishers},
  address={Dordrecht}
}
\endgroup

\end{document}